\documentclass{amsart}
\usepackage{amsmath}
\usepackage{amsfonts}
\usepackage{amsthm}
\usepackage{amssymb}
\usepackage{enumerate}
\usepackage{cite}
\theoremstyle{plain}
\newtheorem{thm}{Theorem}[section]

\newtheorem{proposition}[thm]{Proposition}

\newtheorem{cor}[thm]{Corollary}

\newtheorem{lemma}[thm]{Lemma}
\theoremstyle{definition}
\newtheorem{defn}[thm]{Definition}
\theoremstyle{remark}
\newtheorem{rmk}[thm]{Remark}

\makeatletter

\newcommand{\Rmnum}[1]{\expandafter\@slowromancap\romannumeral #1@}
\makeatother

\thanks{The first author is supported by NSF of China No.11925107, No.11671057 and No.11688101. The third author is supported by NSF of China No.11901090.}

\subjclass[2010]{32F45, 32F18, 32T15}

\begin{document}

\title{The Gehring-Hayman type theorems on complex domains}
\author{Jinsong Liu\textsuperscript{1,2} $\&$ Hongyu Wang\textsuperscript{1,2} $\&$ Qingshan Zhou\textsuperscript{3}}
\address{$1.$ HLM, Academy of Mathematics and Systems Science,
Chinese Academy of Sciences, Beijing, 100190, China}
\address{$2.$ School of
Mathematical Sciences, University of Chinese Academy of Sciences,
Beijing, 100049, China }
\address{$3.$ School of Mathematics and Big Data, Foshan university, Foshan, Guangdong, 528000, China}
\email{liujsong@math.ac.cn, wanghongyu16@mails.ucas.ac.cn, q476308142@qq.com}

\begin{abstract} In this paper we establish {\it Gehring-Hayman} type theorems for some complex domains.
Suppose that $\Omega\subset \mathbb{C}^n$ is a bounded $m$-convex domain with Dini-smooth boundary, or a bounded strongly pseudoconvex domain with $C^2$-smooth boundary. Then we prove that the Euclidean length of Kobayashi geodesic $[x,y]$ in $\Omega$ is less than $c_1|x-y|^{c_2}$. Furthermore, if $\Omega$ endowed with the Kobayashi metric is Gromov hyperbolic, then we can generalize this result to quasi-geodesics with respect to Bergman metric, Carath\'{e}odory metric or K\"{a}hler-Einstein metric.

As applications, we prove the bi-H\"{o}lder equivalence between the Euclidean boundary and the Gromov boundary. Moreover, by using this boundary correspondence, we can show some extension results for biholomorphisms, and more general rough quasi-isometries with respect to the Kobayashi metrics between the domains.
\end{abstract}

\maketitle

\section{\noindent{{\bf Introduction}}}\label{In}
Given any two points $x,\: y\in B(0,1)\subset\mathbb{C}$, the hyperbolic geodesic $[x,y]$ is the arc of the circle through $x$ to $y$ orthogonal to the boundary $\partial B(0,1)$. Therefore, the Euclidean length of $[x,y]$ satisfies
$$l_{d}([x,y])\leq \frac{\pi}{2}|x-y|.$$
This simple fact is an instance of the following famous theorem due to Gehring-Hayman\cite{Gehring1962An}.
\begin{thm}
If $\Omega$ is a simply connected planar domain $(\Omega\neq\mathbb{C})$, then there exists $C>0$ such that, for any $x,\:y\in\Omega$,
$$l_{d}([x,y])\leq C \:l_d(\gamma),$$
where $[x,y]$ is the hyperbolic geodesic joining $x$ and $y$, and $\gamma\subset \Omega$ is any curve with end points $x$ and $y$,
and $l_{d}$ denotes the Euclidean length.\end{thm}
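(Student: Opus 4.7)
The plan is to reduce the problem to the unit disk via the Riemann mapping theorem and then exploit classical distortion estimates for conformal maps. Because $\Omega \neq \mathbb{C}$ and $\Omega$ is simply connected, there is a conformal bijection $\varphi : \mathbb{D} \to \Omega$. Set $a = \varphi^{-1}(x)$, $b = \varphi^{-1}(y)$, and let $L$ denote the hyperbolic geodesic in $\mathbb{D}$ joining $a$ to $b$. Since the hyperbolic metric is conformally invariant, $[x,y] = \varphi(L)$, and any curve $\gamma \subset \Omega$ with endpoints $x,y$ pulls back to a curve $\widetilde{\gamma} \subset \mathbb{D}$ from $a$ to $b$. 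By composing $\varphi$ with a M\"obius self-map of $\mathbb{D}$, I may assume $L$ is the symmetric real segment $[-r,r]\subset\mathbb{D}$, so that
\[
l_d([x,y]) \;=\; \int_{-r}^{r} |\varphi'(t)|\,dt, \qquad l_d(\gamma) \;=\; \int_{\widetilde{\gamma}} |\varphi'(z)|\,|dz|.
\]

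Next I would invoke the Koebe $1/4$ theorem together with the Koebe distortion theorem, which give $(1-|z|^{2})|\varphi'(z)| \asymp \delta_\Omega(\varphi(z))$ and control $|\varphi'|$ up to a multiplicative constant on every hyperbolic ball of bounded radius. Partition $L$ into consecutive hyperbolic unit segments $L_{0},\dots,L_{N}$ with midpoints $z_{k}$; the distortion bound then yields $\int_{L_k}|\varphi'|\,|dz| \asymp \delta_\Omega(\varphi(z_{k}))$, and so $l_d([x,y]) \asymp \sum_{k=0}^{N}\delta_\Omega(\varphi(z_{k}))$. Moreover each image $\varphi\bigl(B_{\mathrm{hyp}}(z_{k},1)\bigr)$ is sandwiched between two Euclidean balls of radii comparable to $\delta_\Omega(\varphi(z_{k}))$. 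The remaining task is to bound any connecting curve below by the same sum, namely
\[
l_d(\gamma) \;\gtrsim\; \sum_{k=0}^{N}\delta_\Omega(\varphi(z_{k})).
\]
Because $\widetilde{\gamma}$ runs from $-r$ to $r$, it must cross every hyperbolic cross-section of $L$ transverse to the real axis through $z_{k}$, and each such crossing converts into a Euclidean contribution to $l_d(\gamma)$ of order $\delta_\Omega(\varphi(z_{k}))$ via the distortion estimate.

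The hard part will be this last step: summing the crossing contributions without being defeated by overlaps or by pathological behaviour of $\widetilde{\gamma}$. The standard remedy is the observation that, along $L$, the sequence $\delta_\Omega(\varphi(z_{k}))$ is quasi-unimodal and decays geometrically away from its maximum (again a consequence of Koebe applied in both directions along the diameter). A packing/telescoping argument then reduces the sum to the dominant scales, where $\widetilde{\gamma}$ must cross essentially disjoint regions, producing matched contributions to $l_d(\gamma)$. Making this packing argument precise is where the planar conformal structure enters most heavily; once it is in place, combining with the upper estimate from Step~2 produces the desired constant $C>0$ independent of $x,y,\gamma$. In the special case $\Omega=\mathbb{D}$ one can bypass the packing step entirely because $\varphi=\mathrm{id}$ reduces the statement to the elementary chord-arc bound $l_d([x,y])\le(\pi/2)|x-y|$ noted in the introduction.
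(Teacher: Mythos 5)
The paper does not prove Theorem~1.1; it is the original 1962 result of Gehring and Hayman, quoted as motivation with a citation, so there is no internal proof to compare against. Evaluating your sketch on its own terms: your high-level strategy is the right one (transfer to $\mathbb{D}$ by the Riemann map, normalize the geodesic to a diameter, partition into hyperbolic unit pieces, and use Koebe distortion to get $l_d([x,y]) \asymp \sum_k \delta_\Omega(\varphi(z_k))$), and this upper estimate is correct and routine.

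The gap is the lower bound on $l_d(\gamma)$, which is not a technical wrinkle but the entire content of the theorem, and the heuristic you give for it does not hold up. You argue that $\widetilde{\gamma}$ must cross the hyperbolic geodesic through $z_k$ perpendicular to $[-r,r]$, and that ``each such crossing converts into a Euclidean contribution to $l_d(\gamma)$ of order $\delta_\Omega(\varphi(z_k))$ via the distortion estimate.'' That inference fails: the crossing point $w_k$ may lie at arbitrarily large hyperbolic distance from $z_k$, and Koebe distortion controls $|\varphi'(w)|/|\varphi'(z_k)|$ only over bounded hyperbolic distance. A crossing near $w_k$ produces a contribution $\asymp \delta_\Omega(\varphi(w_k))$, which can be much smaller than $\delta_\Omega(\varphi(z_k))$, so the crossings need not sum to anything comparable to $l_d([x,y])$. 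What is actually required is a separation-type statement, namely that $\gamma$ must enter the Euclidean ball $B(\varphi(z_k),\,c\,\delta_\Omega(\varphi(z_k)))$ for each $k$, or an equivalent modulus/extremal-length lower bound; this is where univalence is used in an essential way, and it is precisely the role that Lemmas~\ref{est14} and~\ref{sep} play in the higher-dimensional setting of this paper. As a secondary point, the ``quasi-unimodal, geometrically decaying'' claim for $\delta_\Omega(\varphi(z_k))$ along the geodesic is false in general (a dumbbell-shaped $\Omega$ yields a geodesic on which $\delta_\Omega$ has an interior minimum), so the proposed packing argument would need a different organizing principle even once the crossing estimate is repaired.
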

The Gehring-Hayman theorem for hyperbolic geodesics in multiply connected plane
domains has been studied by Pommerenke\cite{Pommerenke1979Uniformly}.
In \cite{Gehring1979Uniform}, Gehring and Osgood generalized the Gehring-Hayman theorem to {\it quasihyperbolic geodesics} on {\it uniform domains} in $\mathbb{R}^n$. Moreover, this theorem has been generalized to domains in $\mathbb{R}^n$ that are quasiconformally equivalent to uniform domains\cite{Heinonen1994Quasiconformal}. Subsequently, this type of Gehring-Hayman theorem has been established in many different situations and plays an important role in modern function theory and quasiconformal analysis, such as \cite{Bonk2001Uniformizing, BKR, Heinonen1994Quasiconformal, Her06, KL}. For instance, Bonk, Heinonen and Koskela \cite{Bonk2001Uniformizing} observed that the Gehring-Hayman property and the {\it Separation property} could be used to characterize the {\it Gromov hyperbolicity} of domains in $\mathbb{R}^n$. This conjecture has been verified by Balogh and Buckley in \cite{BaloghGeometric}.

In \cite{balogh2000gromov} Balogh and Bonk investigated the Gromov hyperbolicity of bounded {\it strictly pseudoconvex domains} in $\mathbb{C}^n$ equipped with the Kobayashi metrics. Recently, Zimmer \cite{Zimmer2016Gromov} discussed the Gromov hyperbolicity of  bounded convex domains of {\it finite type} endowed with the Kobayashi metrics. Motivated by these results, in this paper we will study the geometric properties of Kobayashi geodesics in those strongly pseudoconvex and $m$-convex domains. In particular, we will investigate the relationship between the Gehring-Hayman property, the Separation property and several hyperbolic type metrics (in the sense of Gromov) such as Kobayashi metric, Bergman metric, Carath\'{e}odory metric and K\"{a}hler-Einstein metric.

\bigskip
In what follows, we will use quite a few constants whose precise values usually does not matter, unless stated otherwise.

We first prove some results similar to the classical Gehring-Hayman theorem for Kobayashi geodesics in {\it $m$-convex} domains or {\it strongly pseudoconvex} domains. We refer the reader to Section 2 for the precise definitions.
\begin{thm}\label{thm}
Let $\Omega$ be a bounded $m$-convex domain in $\mathbb{C}^n (n\geq 2)$ with Dini-smooth boundary. Then for any $0<c_2<1/(12m^2-8m)$, there exists a constant $c_1>0$ such that, for any $x,\: y\in\Omega$,
$$l_{d}([x,y])\leq c_1|x-y|^{c_2},$$
where $[x,y]$ is any Kobayashi geodesic joining $x$ and $y$ in $\Omega$.

If, in addition, $(\Omega, \:K_{\Omega})$ is Gromov hyperbolic and $\gamma$ is a Kobayashi $\lambda$-quasi-geodesic connecting $x$ and $y$ with $\lambda\geq 1$, then there exists a constant $c_1'>0$ such that
$$l_{d}(\gamma)\leq c_1'|x-y|^{c_2}.$$
\end{thm}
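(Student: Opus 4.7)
The plan is to reduce the first assertion to a sharp anisotropic lower bound for the Kobayashi infinitesimal metric near a Dini-smooth $m$-convex boundary point, of the form
\[
K_{\Omega}(z;v) \;\gtrsim\; \frac{|v|}{\delta_{\Omega}(z)^{1/(2m)}},
\]
where $\delta_{\Omega}(z)$ denotes Euclidean distance from $z$ to $\partial\Omega$, together with a finer direction-dependent version that separates the complex normal and complex tangential components of $v$. Such estimates are by now standard under the $m$-convex, Dini-smooth hypothesis, and I would quote them from Section~2; they ultimately rest on the fact that complex lines have order of contact at most $2m$ with $\partial\Omega$. I will also use the companion integrated statement $K_{\Omega}(z_{0},z)\asymp \tfrac{1}{2m}\log(1/\delta_{\Omega}(z))$ as $z\to\partial\Omega$.

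With these in hand, parameterize the Kobayashi geodesic $[x,y]$ by Kobayashi arc-length $t\in[0,L]$ with $L=K_{\Omega}(x,y)$, call this parameterization $\sigma$, and pick the point $w=\sigma(t_{0})$ at which $\delta_{\Omega}\circ\sigma$ attains its maximum $\delta^{\ast}$. Analyse the two sub-arcs $\sigma|_{[0,t_{0}]}$ and $\sigma|_{[t_{0},L]}$ separately: on each, $\delta_{\Omega}\circ\sigma$ is essentially monotone, and the infinitesimal estimate gives
\[
|\sigma'(t)|\;\leq\; C\,\delta_{\Omega}(\sigma(t))^{1/(2m)}.
\]
Combined with essentially exponential growth of $\delta_{\Omega}\circ\sigma$ in $t$ (from integrating the same bound in the complex normal direction), this yields $l_{d}(\sigma|_{[0,t_{0}]})\lesssim(\delta^{\ast})^{\alpha}$ for an $\alpha=\alpha(m)$ determined by the sharp estimate, and analogously for the other half. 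It remains to relate $\delta^{\ast}$ to $|x-y|$: the $m$-convex geometry forces $|x-y|\gtrsim(\delta^{\ast})^{\beta}$ for some $\beta=\beta(m)$, roughly because a bulge of depth $\delta^{\ast}$ between $x$ and $y$ is incompatible with too-small a separation of the endpoints, the obstruction being governed by the order of contact of complex lines. Combining the two relations gives $l_{d}(\sigma)\lesssim|x-y|^{\alpha/\beta}$, and a quantitative pursuit of the exponents is designed to yield the claimed bound with $c_{2}<1/(12m^{2}-8m)$.

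For the second assertion I would invoke the Gromov hyperbolicity of $(\Omega,K_{\Omega})$ together with the stability of quasi-geodesics (the Morse lemma): any Kobayashi $\lambda$-quasi-geodesic $\gamma$ joining $x$ and $y$ lies in a $K_{\Omega}$-neighborhood of the geodesic $[x,y]$ of radius $R=R(\lambda,\delta_{0})$, where $\delta_{0}$ is the hyperbolicity constant. Because Kobayashi balls of bounded radius centred at $z$ have Euclidean diameter at most $C\,\delta_{\Omega}(z)^{1/(2m)}$ in the tangential directions and $C\,\delta_{\Omega}(z)$ in the complex normal direction, the quasi-geodesic $\gamma$ is confined to a narrow Euclidean tube around $[x,y]$ whose cross-sections shrink as the boundary is approached. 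A chaining argument---partitioning $[x,y]$ into Kobayashi sub-arcs of unit length, enclosing each inside a Kobayashi ball of radius $R$, and summing the Euclidean diameters---then upgrades the bound from the first part to $l_{d}(\gamma)\leq c_{1}'|x-y|^{c_{2}}$, with a multiplicative constant depending on $\lambda$ and the hyperbolicity constant.

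The main technical obstacle, in my view, is not the overall scheme but the precise value of the Hölder exponent: the shape $1/(12m^{2}-8m)=1/(4m(3m-2))$ suggests that the argument must carefully combine (i) the anisotropic lower bound with different powers of $\delta_{\Omega}$ in normal and tangential directions, (ii) the $2m$-order of contact of complex lines coming from $m$-convexity, and (iii) the depth-to-chord conversion $|x-y|\gtrsim(\delta^{\ast})^{\beta}$, with each step sharp enough that errors do not compound beyond the stated exponent. A secondary point of delicacy is that Dini-smoothness is strictly weaker than $C^{1,\alpha}$-smoothness, so the sharp boundary estimates of Kobayashi type must be verified in the Dini regime; this is where I expect the most careful bookkeeping will be required.
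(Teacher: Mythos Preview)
Your outline has two concrete problems. First, the infinitesimal lower bound you invoke is off by a factor of two in the exponent: with the paper's Definition~2.1 of $m$-convexity one has $\delta_\Omega(z;v)\le C\,\delta_\Omega(z)^{1/m}$, and combining this with Graham's estimate $k_\Omega(z;v)\ge |v|/(2\delta_\Omega(z;v))$ gives $k_\Omega(z;v)\gtrsim |v|\,\delta_\Omega(z)^{-1/m}$, not $\delta_\Omega(z)^{-1/(2m)}$. There is no finer anisotropic normal/tangential splitting available here under merely Dini-smooth $m$-convex hypotheses; the paper works entirely with the isotropic $1/m$ bound. Second, and more seriously, the step ``$|x-y|\gtrsim(\delta^{\ast})^{\beta}$'' is simply false: take $x,y$ both near the center of $\Omega$ with $|x-y|$ tiny; then $\delta^{\ast}$ is of order $1$ while $|x-y|$ is arbitrarily small. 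No amount of $m$-convex ``bulge'' geometry repairs this, because the obstruction you describe only bites when both endpoints are close to $\partial\Omega$.

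The paper's proof replaces your $\delta^{\ast}$ mechanism by a Gromov-product argument. One first proves, via a dyadic decomposition of the geodesic by level sets of $\delta_\Omega$ (this is Lemma~\ref{uniform}, yielding Lemma~\ref{est14}), that $\delta_\Omega(z)\ge\tilde C\big(l_d([x,z])\wedge l_d([z,y])\big)^{\alpha}$ for any $\alpha>3m^2-2m$. Then, fixing a base point $\omega$, one combines the upper bound $K_\Omega(x,y)\le\log\bigl(1+A|x-y|/\sqrt{\delta_\Omega(x)\delta_\Omega(y)}\bigr)$ from Dini-smoothness with the lower bound $K_\Omega(\omega,\cdot)\ge\tfrac12\log(1/\delta_\Omega(\cdot))-K$ to get $(x|y)_\omega\ge\tfrac18\log(1/|x-y|)-K'$ in the regime $|x-y|\ge(\delta_\Omega(x)\delta_\Omega(y))^2$; the complementary regime is handled by a direct length estimate. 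Since $K_\Omega(\omega,[x,y])\ge(x|y)_\omega$, this forces every point $z$ on the geodesic to have $\delta_\Omega(z)\lesssim|x-y|^{1/4}$, and feeding this into Lemma~\ref{est14} at the Euclidean midpoint yields $l_d([x,y])\lesssim|x-y|^{1/(4\alpha)}$, whence $c_2<1/(12m^2-8m)$. For the quasi-geodesic part, the paper does not use a tube-and-chain argument; it simply notes that Lemma~\ref{uniform} already applies to $\lambda$-quasi-geodesics, and that Gromov hyperbolicity plus geodesic stability gives $K_\Omega(\omega,\gamma)\ge K_\Omega(\omega,[x,y])-R$, so the same computation goes through with an extra additive $R$.
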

Recently Zimmer showed the following theorem.
\begin{thm}[Corollary 7.2,\cite{Subelliptic}]\label{thm-Z}
If $\Omega$ is a bounded convex domain in $\mathbb{C}^n (n\geq 2)$ and $(\Omega,K_{\Omega})$ is Gromov hyperbolic, then $\Omega$ is $m$-convex for some $m\geq1$.
\end{thm}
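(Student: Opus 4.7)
The plan is to argue contrapositively: assume $\Omega$ is not $m$-convex for any $m \geq 1$, and deduce that $(\Omega, K_\Omega)$ is not Gromov hyperbolic. The failure of $m$-convexity for every $m$ supplies, for each integer $k \geq 1$, a boundary point $\xi_k \in \partial \Omega$ and a complex tangent line $L_k$ at $\xi_k$ whose order of contact with $\partial \Omega$ exceeds $k$. By compactness of $\partial \Omega$ and of the relevant complex Grassmannian, one extracts a subsequence with $\xi_k \to \xi_\infty \in \partial \Omega$ and $L_k \to L_\infty$.

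Next I would apply a Pinchuk-type non-isotropic affine rescaling $T_k$ of $\mathbb{C}^n$ tailored to $\xi_k$: translate $\xi_k$ to the origin, rotate so the outward real normal becomes a fixed direction, and dilate the $L_k$ direction by a factor calibrated to compensate for the large order of contact (much larger than the dilation in the complex normal direction, and in the remaining complex tangential directions). Each $T_k$ is an affine biholomorphism, so $(T_k(\Omega), K_{T_k(\Omega)})$ is isometric to $(\Omega, K_\Omega)$ and inherits the same Gromov hyperbolicity constant $\delta$. Using convexity together with McNeal-type supporting-function estimates, a subsequence of $T_k(\Omega)$ converges in the local Hausdorff topology to a convex domain $\hat \Omega \subset \mathbb{C}^n$, and the unbounded contact orders in the $L_k$-directions force $\hat \Omega$ to contain an entire complex affine line $\ell$.

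Stability of the Kobayashi metric under local Hausdorff convergence of convex domains yields local uniform convergence $K_{T_k(\Omega)} \to K_{\hat \Omega}$, transporting $\delta$-hyperbolicity to $(\hat \Omega, K_{\hat \Omega})$. But the line $\ell \subset \hat \Omega$ provides a holomorphic embedding $\mathbb{C} \hookrightarrow \hat \Omega$, so $K_{\hat \Omega}$ vanishes on pairs of distinct points along $\ell$; pulling such ``large flat'' configurations back through the $T_k$ yields, for arbitrarily large $R$, four-tuples of points in $(\Omega, K_\Omega)$ whose pairwise Kobayashi distances realize a near-Euclidean cross-ratio at scale $R$, which violates Gromov's four-point definition of $\delta$-hyperbolicity as soon as $R \gg \delta$. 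This closes the contradiction.

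The principal obstacle is the calibration of the dilations in $T_k$: one must choose the stretching factors in several complex directions so that the limit $\hat \Omega$ is a genuine proper convex subdomain of $\mathbb{C}^n$ (neither a half-space nor all of $\mathbb{C}^n$) which honestly contains the complex line $\ell$ rather than degenerating. This requires the fine Kobayashi metric estimates on convex domains developed by McNeal and others, along with careful tracking of the complex supporting functionals of $\partial \Omega$ near points of very high line type. A secondary technical point is verifying that basepoints used to detect Gromov hyperbolicity can be chosen to converge into the interior of $\hat \Omega$, so that the hyperbolicity constant $\delta$ is genuinely inherited in the limit.
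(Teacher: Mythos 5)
This statement is imported from the literature (Corollary~7.2 of Zimmer's paper cited as \cite{Subelliptic}); the paper at hand offers no proof, so there is nothing internal to compare against. Evaluating your proposal on its own terms: the overall strategy --- contrapositive, Frankel/Pinchuk-type non-isotropic affine rescalings at points where convexity is ``too flat,'' passage to a limit domain, and a contradiction with Gromov hyperbolicity --- is indeed the style of argument Zimmer uses in his work on convex domains of finite type. But the concluding step of your sketch has a genuine gap.

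You arrange the rescalings $T_k$ so that the local Hausdorff limit $\hat\Omega$ contains an entire complex affine line $\ell$, and then you wish to ``transport $\delta$-hyperbolicity to $(\hat\Omega, K_{\hat\Omega})$'' and derive a contradiction from the fact that $K_{\hat\Omega}$ vanishes on pairs of points of $\ell$. This does not work as stated. If $\hat\Omega$ contains a complex line then $K_{\hat\Omega}$ is only a pseudo-distance, so $(\hat\Omega,K_{\hat\Omega})$ is not a proper geodesic metric space and Gromov hyperbolicity is not even well-defined for it; there is nothing to ``transport.'' More to the point, vanishing of the Kobayashi distance along $\ell$ is \emph{not} in tension with the Gromov four-point condition. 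Consider $\hat\Omega=\mathbb{D}\times\mathbb{C}\subset\mathbb{C}^2$: here $K_{\hat\Omega}\bigl((z_1,w_1),(z_2,w_2)\bigr)=K_{\mathbb{D}}(z_1,z_2)$, the associated quotient metric space is the Poincar\'e disc, and that is Gromov hyperbolic. So points that become arbitrarily Kobayashi-close after rescaling do not by themselves produce a four-tuple violating $\delta$-thinness; the ``near-Euclidean cross-ratio at scale $R$'' you invoke is not manufactured by the degeneration along $\ell$.

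The correct version of the rescaling argument (as in Zimmer's published proofs) does the opposite of what you propose: the dilations must be calibrated so that the limit $\hat\Omega$ is $\mathbb{C}$-proper --- precisely so that $K_{\hat\Omega}$ is a genuine, complete Kobayashi metric, one can legitimately pass $\delta$-hyperbolicity to the limit via locally uniform convergence of the metrics, and $\hat\Omega$ retains a boundary point of infinite line type (or a tube/product structure). The contradiction is then extracted from the structure of this $\mathbb{C}$-proper limit (e.g.\ the existence of arbitrarily fat geodesic triangles, or a failure of the visibility property), not from a line living inside $\hat\Omega$. A secondary issue is that without any boundary smoothness hypothesis the phrase ``order of contact of a complex tangent line at $\xi_k$'' is not available; the failure of $m$-convexity must instead be encoded via interior quantities of the form $\delta_\Omega(z_k;v_k)/\delta_\Omega(z_k)^{1/m_k}\to\infty$, and the rescalings should be centered at the interior points $z_k$ rather than at boundary points with a prescribed contact order. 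Both points need to be repaired before the sketch can close.
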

The following result is a direct consequence of Theorem \ref{thm} and Theorem \ref{thm-Z}.
\begin{cor}\label{z1}
Suppose that $\Omega$ is a bounded convex domain in $\mathbb{C}^n (n\geq 2)$ with Dini-smooth boundary and that $(\Omega,\: K_{\Omega})$ is Gromov hyperbolic, where $K_\Omega$ is the Kobayashi metric of $\Omega$. Then for any $\lambda\geq 1$, there exist constants $c_1,c_2>0$ such that for all $x,y\in\Omega$,
$$l_{d}(\gamma)\leq c_1|x-y|^{c_2},$$
where $\gamma$ is a $\lambda$-quasi-geodesic in the Kobayashi metric with end points $x$ and $y$.
\end{cor}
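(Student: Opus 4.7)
The plan is to obtain this corollary by simply chaining Theorem \ref{thm-Z} into the quasi-geodesic half of Theorem \ref{thm}, so the argument is essentially a hypothesis check. First I would invoke Theorem \ref{thm-Z}: since $\Omega$ is bounded and convex in $\mathbb{C}^n$ and $(\Omega,K_\Omega)$ is Gromov hyperbolic, there exists some integer $m\geq 1$ for which $\Omega$ is $m$-convex. The value of $m$ is not made explicit by the cited theorem; it is simply extracted as an existence statement that depends on $\Omega$.

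Next I would verify that all hypotheses of the second part of Theorem \ref{thm} are now in force for this particular $m$. Namely: $\Omega$ is bounded (given), $m$-convex (from the previous step), has Dini-smooth boundary (given), and $(\Omega,K_\Omega)$ is Gromov hyperbolic (given). Fix any $\lambda\geq 1$ and any $c_2\in(0,1/(12m^2-8m))$; Theorem \ref{thm} then furnishes a constant $c_1'>0$, depending on $\Omega$, $c_2$, and $\lambda$, such that every Kobayashi $\lambda$-quasi-geodesic $\gamma$ with endpoints $x,y\in\Omega$ satisfies
$$l_d(\gamma)\leq c_1'|x-y|^{c_2}.$$
Renaming $c_1'$ as $c_1$ yields precisely the desired inequality.

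The main (and in fact only) obstacle to flag is purely formal: the exponent $c_2$ inherited from Theorem \ref{thm} depends on the integer $m$ produced by Theorem \ref{thm-Z}, which is not computed but only guaranteed to exist. Consequently Corollary \ref{z1} is naturally stated as an existence claim for some $c_2>0$ rather than an explicit admissible range, and this is exactly how the statement is written. With this convention the proof is a one-line composition of the two preceding results, and no additional geometric input is required.
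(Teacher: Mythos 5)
Your proposal is correct and matches the paper's own argument exactly: the paper states that Corollary \ref{z1} is ``a direct consequence of Theorem \ref{thm} and Theorem \ref{thm-Z},'' which is precisely the composition you describe, with $m$ obtained from Theorem \ref{thm-Z} and then fed into the quasi-geodesic part of Theorem \ref{thm}.
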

These results show that the Kobayashi geodesics (or quasi-geodesics) are essentially also short in the Euclidean sense.

\bigskip
The proof of Theorem \ref{thm} requires the following lemma. In what follows, denote $a\vee b:= \max\{a, b\}$ and $a\wedge b:= \min\{a, b\}$ for $a,\:b \in \mathbb R$.
\begin{lemma}\label{est14}
Let $\Omega$ be a bounded $m$-convex domain in $\mathbb{C}^n (n\geq 2)$ with Dini-smooth boundary, and let $[x,y]\subset \Omega$ be a Kobayashi geodesic joining $x$ and $y$. Then for any $\alpha>3m^2-2m$, there exists a constant $\tilde{C}>0$ such that, for every $\omega\in [x,y]$,
\begin{align}
\delta_{\Omega}(\omega)\geq \tilde{C}\left(l_{d}([x,\omega])\wedge l_{d}([\omega,y])\right)^{\alpha},
\end{align}
where $\delta_{\Omega}(\omega)$ is the Euclidean distance from $\omega$ to $\partial\Omega$.
\end{lemma}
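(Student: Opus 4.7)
The plan is to parametrize the Kobayashi geodesic by Kobayashi arclength as $\gamma:[-T_-,T_+]\to\Omega$ with $\gamma(0)=\omega$ and $K_\Omega(\gamma(s);\gamma'(s))\equiv 1$, and to bound the Euclidean length of each half in terms of $d:=\delta_\Omega(\omega)$. Two lower bounds on the infinitesimal Kobayashi metric drive the argument: the universal normal-component estimate $K_\Omega(z;v)\geq |v_N^{\mathbb{C}}|/(2\delta_\Omega(z))$ coming from the supporting (half-)hyperplane at the closest boundary point, and the sharp $m$-convex estimate $K_\Omega(z;v)\geq c\,|v|/\delta_\Omega(z)^{1/(2m)}$ available for $m$-convex domains with Dini-smooth boundary. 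The first estimate, integrated along $\gamma$ using $|\tfrac{d}{ds}\delta_\Omega(\gamma(s))|\leq |\gamma_N'(s)|\leq 2\delta_\Omega(\gamma(s))$, yields the Schwarz-type comparison $\delta_\Omega(\gamma(s))\leq e^{2|s|}d$; the second translates unit Kobayashi speed into the Euclidean speed bound $|\gamma'(s)|\leq C_1\,\delta_\Omega(\gamma(s))^{1/(2m)}$.

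Combining these two ingredients produces the key length estimate
\begin{equation*}
l_d(\gamma|_{[0,s]})\;\leq\;C_1\int_0^s(e^{2t}d)^{1/(2m)}\,dt\;\leq\;C_2\,d^{1/(2m)}\,e^{s/m},
\end{equation*}
and the analogous bound on $[-s,0]$. Applied at the endpoints, this gives $l_d([\omega,\gamma(T_\pm)])\leq C_2\,d^{1/(2m)}\,e^{T_\pm/m}$. The lemma now reduces to showing that, for every $\alpha>3m^2-2m$, the shorter Kobayashi half-length satisfies
\begin{equation*}
\min(T_-,T_+)\;\leq\;\Big(\tfrac{1}{2}-\tfrac{m}{\alpha}\Big)\log\tfrac{1}{d}+C_3,
\end{equation*}
since substituting this into the previous inequality produces $\min(l_d([x,\omega]),l_d([\omega,y]))\leq\tilde{C}\,d^{1/\alpha}$, which rearranges to the assertion of the lemma.

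The main obstacle is therefore establishing this logarithmic control of the shorter Kobayashi half-length with the precise constant that yields the exponent $3m^2-2m$. This is handled by local analysis near the boundary point $p\in\partial\Omega$ closest to $\omega$: in suitable local coordinates $\partial\Omega$ is well-approximated by a $(2m)$-order surface, and $\Omega$ is squeezed near $p$ by an explicit $m$-convex model polydomain whose Kobayashi geometry is tractable. A rescaling/compactness argument (in the spirit of Balogh--Bonk's treatment of the strongly pseudoconvex case, adapted to general $m$) then shows that if both $T_\pm$ exceeded the required logarithmic bound, a suitably rescaled limit of the geodesic would persist in the model beyond the squeezing permitted by $m$-convexity, yielding a contradiction. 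Tracking the explicit constants through this rescaling, and balancing the exponent $2$ in the Schwarz comparison against the exponent $1/(2m)$ in the $m$-convex estimate, is what produces the quadratic-in-$m$ exponent $3m^2-2m$ rather than the sharper $2m$ a naive argument might suggest.
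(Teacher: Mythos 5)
Your overall strategy (parametrize by Kobayashi arclength and combine a Schwarz-type comparison with a lower bound on the Kobayashi metric to control Euclidean speed) is a plausible route, but the proposal has two real problems.

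First, a factual error: for a bounded $m$-convex domain, the $m$-convexity inequality $\delta_\Omega(z;v)\leq C\,\delta_\Omega(z)^{1/m}$ together with Graham's estimate $k_\Omega(z;v)\geq |v|/(2\delta_\Omega(z;v))$ gives $k_\Omega(z;v)\geq |v|/(2C\,\delta_\Omega(z)^{1/m})$, i.e.\ exponent $1/m$, not $1/(2m)$. (Compare the strongly pseudoconvex case, $m=2$, where the paper's Lemma~4.3 yields exponent $1/2$, not $1/4$.) With the corrected exponent, your reduction requires $\min(T_-,T_+)\leq(\tfrac12-\tfrac{m}{2\alpha})\log\tfrac1d+C_3$, and your arithmetic explanation for why $3m^2-2m$ should appear no longer matches; indeed, if your reduction held with the corrected exponent it would give the strictly better range $\alpha>m$, which should make you suspicious that something else is missing.

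Second, and more seriously, the claimed bound on $\min(T_-,T_+)$ is exactly the heart of the lemma and you do not prove it. Both of the ingredients you actually establish (the Gronwall estimate $\delta_\Omega(\gamma(s))\leq e^{2|s|}d$ and the Euclidean speed bound) come from \emph{lower} bounds on $k_\Omega$, and lower bounds alone are entirely consistent with a unit-speed Kobayashi geodesic hugging the boundary for arbitrarily long Kobayashi time; they cannot force the geodesic to escape the boundary neighborhood within $O(\log\tfrac1d)$ Kobayashi time. The paper closes this gap by using the Dini-smooth \emph{upper} bound $K_\Omega(x,y)\leq\log\bigl(1+A|x-y|/\sqrt{\delta_\Omega(x)\delta_\Omega(y)}\bigr)$ (Lemma~2.8), applied after a dyadic decomposition of the geodesic according to the level sets of $\delta_\Omega$: the upper bound caps the Kobayashi length of each dyadic piece, the lower bound caps it from below, and comparing the two yields a summable bound on the Euclidean length of each piece; the distance-to-hyperplane estimate (Lemma~2.7) then translates this into the desired lower bound on $\delta_\Omega$. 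Your proposal replaces this entire mechanism by an appeal to ``rescaling/compactness near a boundary point,'' with no statement of what is rescaled, what compactness argument is invoked, or how the threshold $3m^2-2m$ emerges from it. As written this is not a proof but a restatement of the difficulty; to repair it you would need either to carry out the rescaling argument concretely (which would be a genuinely new proof) or to import an upper Kobayashi estimate such as Lemma~2.8 and redo the comparison, which would bring you back to the paper's argument.
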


Note that we may compare Lemma \ref{est14} with the {\it Separation property}, which states that whenever $[x,y]$ is a geodesic in $(\Omega, \:K_{\Omega})$, $z\in[x,y]$ and $\gamma$ is a curve in $\Omega$ connecting the subcurves $[x,z]$ and $[z,y]$ of $[x,y]$, then for some $a>0$,
$$B(z,a\delta_{\Omega}(z))\cap\gamma\neq\emptyset.$$
In fact Lemma \ref{est14} gives that $$B(z,\delta_{\Omega}^{\frac{1}{\alpha}}(z)/\tilde{C})\cap\gamma\neq\emptyset.$$
We refer the reader to \cite{BaloghGeometric,Bonk2001Uniformizing,KLM} for more information on the Separation property.

\bigskip
Next we prove a similar result for strongly pseudoconvex domains as follows.
 \begin{thm}\label{thm-2}
Let $\Omega$ be a bounded strongly pseudoconvex domain in $\mathbb{C}^n (n\geq 2)$ with $C^2$-smooth boundary, and let $\gamma$ be a Kobayashi $\lambda$-quasi-geodesic in $\Omega$ joining $x$ and $y$ with $\lambda\geq 1$. Then for any $0<c_2<1/16$, there exists a constant $c_1>0$ such that
$$l_{d}(\gamma)\leq c_1|x-y|^{c_2}.$$
\end{thm}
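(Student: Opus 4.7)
The plan is to follow the scheme of Theorem~\ref{thm} and Lemma~\ref{est14}, but with the sharper Kobayashi-metric estimates available on a strongly pseudoconvex domain. Since a bounded strongly pseudoconvex domain with $C^2$-smooth boundary is Gromov hyperbolic in the Kobayashi metric (Balogh--Bonk \cite{balogh2000gromov}), any Kobayashi $\lambda$-quasi-geodesic from $x$ to $y$ stays within bounded $K_\Omega$-distance of a Kobayashi geodesic between the same endpoints, by the standard stability of quasi-geodesics in a Gromov hyperbolic space. Exactly as in the second half of Theorem~\ref{thm}, this reduces the desired upper bound on $l_d(\gamma)$ to the corresponding bound along a genuine Kobayashi geodesic, plus a controlled error from the tracking.

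The heart of the argument is a strongly pseudoconvex analogue of Lemma~\ref{est14}. I would establish that for any $\alpha>4$ there exists $\widetilde{C}>0$ such that for every Kobayashi geodesic $[x,y]\subset\Omega$ and every $\omega\in[x,y]$,
\[
\delta_\Omega(\omega)\;\geq\;\widetilde{C}\,\bigl(l_d([x,\omega])\wedge l_d([\omega,y])\bigr)^{\alpha}.
\]
The proof would follow the same pattern as Lemma~\ref{est14}, but the bounds on $K_\Omega$ coming from $m$-convexity are replaced by the sharp anisotropic estimate
\[
K_\Omega(p;v)\;\asymp\;\frac{|v_N|}{\delta_\Omega(p)}+\frac{|v_T|}{\sqrt{\delta_\Omega(p)}}
\]
that holds on any bounded strongly pseudoconvex domain with $C^2$-smooth boundary (here $v_N$ and $v_T$ are the normal and complex-tangential components of $v$ at $p$). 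The two distinct scales $\delta_\Omega(p)$ and $\sqrt{\delta_\Omega(p)}$ let me control normal and complex-tangential geodesic motion separately, and this is what pushes the exponent from the value $\alpha>3\cdot2^{2}-2\cdot2=8$ that Lemma~\ref{est14} would yield by treating $\Omega$ as merely $2$-convex, down to $\alpha>4$. This is the main obstacle of the proof: designing the chaining/covering argument along the geodesic so that it sees both scales simultaneously and does not lose any factor in the complex-tangential direction.

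Once the separation estimate is available, the remainder of the proof is the same Euclidean-arc-length bookkeeping as in Theorem~\ref{thm}. I would parametrize $\gamma$ (after reducing to a geodesic) by Euclidean arc length $s\in[0,L]$ with $L=l_d(\gamma)$; the separation gives $\delta_\Omega(\gamma(s))\gtrsim\min(s,L-s)^{\alpha}$. Combining this with the standard lower bound on $K_\Omega$ in terms of $\delta_\Omega$, and comparing against the upper bound on the $K_\Omega$-length of $\gamma$ coming from the $\lambda$-quasi-geodesic assumption, yields the exponent relation $c_2<1/(4\alpha)$, exactly as in Theorem~\ref{thm}. Letting $\alpha\downarrow 4$ then gives the advertised range $c_2<1/16$.
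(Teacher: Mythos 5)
Your plan matches the paper's proof in structure: reduce to geodesics via stability in the Gromov hyperbolic space (using Balogh--Bonk's hyperbolicity), establish a separation estimate $\delta_{\Omega}(\omega)\gtrsim\bigl(l_d([x,\omega])\wedge l_d([\omega,y])\bigr)^{\alpha}$ for any $\alpha>4$, and then run the same Gromov-product bookkeeping as in Theorem~\ref{thm} (with its two cases according to whether $|x-y|\gtrless(\delta_{\Omega}(x)\delta_{\Omega}(y))^{2}$) to get $c_2<1/(4\alpha)<1/16$. The separation estimate you propose is exactly the paper's Lemma~\ref{sep}, and your exponent accounting is correct.

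The one place you overcomplicate things is the mechanism that pushes $\alpha$ from $8$ down to $4$. You describe it as a new chaining argument that tracks the normal and complex-tangential components of the tangent vector ``simultaneously,'' and flag this as the main obstacle. The paper does not do this; it keeps the \emph{same} dyadic chaining as in Lemma~\ref{uniform} and makes two simple substitutions. For the infinitesimal lower bound it uses only the weaker (tangential) part of the anisotropic estimate, namely $k_{\Omega}(z;v)\geq C_1|v|/\delta_{\Omega}^{1/2}(z)$, which is the same strength as what you'd get from $m=2$. The entire gain comes from replacing the hyperplane-separation bound $K_{\Omega}(x,z)\geq \tfrac12\log\bigl(\delta_{\Omega}(x)/(C\delta_{\Omega}^{1/m}(z))\bigr)$ of Lemma~\ref{est25} with Balogh--Bonk's Lemma~\ref{est12}, $K_{\Omega}(x,z)\geq\tfrac12\bigl|\log(\delta_{\Omega}(x)/\delta_{\Omega}(z))\bigr|-C$, which is effectively the integrated normal-direction estimate and carries a full power of $\delta_{\Omega}(z)$ rather than $\delta_{\Omega}^{1/2}(z)$. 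The additive constant $C$ is then absorbed by a small case split ($\tfrac12|\log(\delta_\Omega(x)/\delta_\Omega(z))|\lessgtr NC$). In short: your plan would work, but you don't need to redesign the covering argument to ``see both scales''; citing Lemma~\ref{est12} already packages the normal-direction information in exactly the form the chaining needs.

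A smaller point: in your last paragraph, the comparison that closes the argument in the interesting case is not between a lower bound on the $K_{\Omega}$-length of $\gamma$ and the quasi-geodesic upper bound on that length. That only handles the ``easy'' case where $x,y$ are far from the boundary relative to $|x-y|$. In the main case one compares the \emph{upper} bound on $\delta_{\Omega}$ at the midpoint of $\gamma$ (coming from the Gromov product lower bound plus the Kobayashi upper bound of Lemma~\ref{nik}) with the \emph{lower} bound coming from the separation lemma. You do say ``exactly as in Theorem~\ref{thm},'' so you would likely recover this, but as written the description does not quite close the loop.
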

\begin{rmk}
It should be pointed out that a bounded strongly pseudoconvex domain is not necessarily convex. Note that in Theorem \ref{thm-2} we still get the result with $|x-y|$ instead of the inner distance of $\Omega$ between $x$ and $y$. That's due to the smoothness assumption of its boundary. In fact by Corollary 8 in \cite{Nikolov2015Estimates}, we know that a bounded domain with Dini-smooth (in particular, $C^2$-smooth) boundary is always uniform (see Section \ref{km} for the precise definitions).
\end{rmk}

Note that the Kobayashi metric, the Bergman metric, the Carath\'{e}odory metric, and the K\"{a}hler-Einstein metric on a bounded convex domain are bilipschitzly equivalent to each other. This is due to the fact that bounded convex domains and strongly pseudoconvex domains are both uniformly squeezing (see Section \ref{usq} for the precise definition).
Denoting by $\varrho_{\Omega}$ one of the above metrics on $\Omega$, then we have:
\begin{cor}\label{cor}
Suppose that $\Omega$ is a bounded domain in $\mathbb{C}^n (n\geq 2)$ and that $\Omega$ satisfies either\\
$($a$)$ $\Omega$ is convex with Dini-smooth boundary and $(\Omega,\varrho_{\Omega})$ is Gromov hyperbolic; or \\
$($b$)$ $\Omega$ is strongly pseudoconvex with $C^2$-smooth boundary. \\
Then there exist $c_1, \:c_2>0$ such that, $\forall x,y\in\Omega$,
$$l_{d}(\gamma)\leq c_1|x-y|^{c_2},$$
where $\gamma$ is a $\lambda$-quasi-geodesic from $x$ to $y$ with respect to the metric $\varrho_{\Omega}$.
\end{cor}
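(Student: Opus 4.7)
The plan is to reduce both cases (a) and (b) to the Kobayashi metric setting, where the conclusion is already in hand: Corollary \ref{z1} handles case (a) and Theorem \ref{thm-2} handles case (b). The bridge from a general $\varrho_\Omega$ back to $K_\Omega$ is the bilipschitz equivalence of the four metrics noted in the paragraph preceding the corollary, which holds in both situations because bounded convex domains and bounded strongly pseudoconvex domains are each uniformly squeezing.

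First I would record that this equivalence is in fact infinitesimal: the underlying Finsler or Riemannian norms satisfy $F_\varrho(\cdot;v) \asymp F_K(\cdot;v)$, which is what the uniformly squeezing property provides. By integration, any rectifiable curve $\gamma\subset\Omega$ satisfies $L_\varrho(\gamma) \asymp L_K(\gamma)$, and in particular $\varrho_\Omega(x,y) \asymp K_\Omega(x,y)$ for all $x,y\in\Omega$. Combining these comparisons, a $\lambda$-quasi-geodesic $\gamma$ in $\varrho_\Omega$ is automatically a $\lambda'$-quasi-geodesic in $K_\Omega$, where $\lambda'$ depends only on $\lambda$ and the bilipschitz constant.

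For case (a), Gromov hyperbolicity is a quasi-isometry invariant and bilipschitz equivalence is a quasi-isometry, so $(\Omega,\varrho_\Omega)$ Gromov hyperbolic implies $(\Omega,K_\Omega)$ Gromov hyperbolic. Corollary \ref{z1} then applies to $\gamma$, viewed as a Kobayashi $\lambda'$-quasi-geodesic, and yields $l_d(\gamma)\le c_1|x-y|^{c_2}$. For case (b), Theorem \ref{thm-2} applies directly to $\gamma$ as a Kobayashi $\lambda'$-quasi-geodesic, producing the same bound.

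The main (minor) technical point is the quasi-geodesic conversion between $\varrho_\Omega$ and $K_\Omega$. One has to invoke the infinitesimal form of the bilipschitz equivalence — rather than only the equivalence of the integrated distance functions — so that arc lengths along $\gamma$ transfer between the two metrics and the defining inequality of a quasi-geodesic is preserved. Once this is in place, the corollary is a direct consequence of results already established.
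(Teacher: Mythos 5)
Your proposal is correct and follows essentially the same route as the paper: transfer the quasi-geodesic to the Kobayashi metric via the bilipschitz equivalence furnished by uniform squeezing, transfer Gromov hyperbolicity via rough-quasi-isometry invariance (Theorem \ref{gro2}), and then invoke the Kobayashi-metric results (the paper cites Theorems \ref{thm} and \ref{thm-2} directly; you route case (a) through Corollary \ref{z1}, which packages Theorem \ref{thm} with Theorem \ref{thm-Z} and is if anything the cleaner reference). One small overclaim: you do not actually need the infinitesimal form of the bilipschitz equivalence to convert quasi-geodesics. The paper's definition of a $(\lambda,\kappa)$-quasi-geodesic compares the parameter difference $|t-s|$ with the distance $\rho(\sigma(s),\sigma(t))$, so if $C^{-1}\varrho_\Omega \le K_\Omega \le C\varrho_\Omega$ then with the same parametrization a $(\lambda,0)$-quasi-geodesic for $\varrho_\Omega$ is automatically a $(C\lambda,0)$-quasi-geodesic for $K_\Omega$; the arc-length comparison is not used here.
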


As applications, we will use the Gehring-Hayman type theorem and the Separation property to investigate the bi-H\"{o}lder homeomorphism between the Euclidean closure and the Gromov closure of certain complex domains.
Our motivation arises from the following result due to Balogh and Bonk (refer to Section \ref{appp} for the precise definitions).



\begin{thm}[Theorem 1.4,\cite{balogh2000gromov}]\label{cc}
Let $\Omega \subseteq \mathbb{C}^{n}, n \geq 2,$ be a bounded, strongly pseudoconvex domain with $C^{2}$-smooth boundary. If $K_\Omega$ is the Kobayashi metric on $\Omega,$ then the metric space $\left(\Omega, \: K_\Omega\right)$ is Gromov hyperbolic. The Gromov boundary $\partial_{G} \Omega$ of $\left(\Omega, K_\Omega\right)$ can be identified with the Euclidean boundary $\partial \Omega$. The Carnot-Carathéodory metric $d_{H}$ on $\partial \Omega$ lies in (and thus determines) the canonical class of snowflake equivalent metrics on $\partial_{G} \Omega$.
\end{thm}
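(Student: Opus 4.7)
The plan is to attack Theorem \ref{cc} in three stages: (i) prove Gromov hyperbolicity of $(\Omega, K_\Omega)$; (ii) identify the Gromov boundary $\partial_G \Omega$ with the Euclidean boundary $\partial \Omega$ as a topological space; (iii) show that the Carnot-Carath\'eodory metric $d_H$ on $\partial\Omega$ lies in the canonical class of visual metrics on $\partial_G\Omega$. My unifying technical tool will be the sharp asymptotic behavior of $K_\Omega$ near $\partial\Omega$, which in the strongly pseudoconvex $C^2$ case can be extracted from Graham's work, together with an anisotropic family of \emph{boxes} $Q(\zeta,\epsilon)\subset\Omega$ of size $\sqrt{\epsilon}$ in the complex-tangential directions at $\zeta\in\partial\Omega$ and size $\epsilon$ in the complex-normal direction. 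The metric $d_H$ is the sub-Riemannian distance induced on $\partial\Omega$ by the horizontal complex-tangential distribution equipped with the Levi form.

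The first and most delicate step is to derive an effective comparison of the form
\begin{align*}
K_\Omega(z,w)=\tfrac{1}{2}\log\!\left(1+\frac{|\pi(z)-\pi(w)|_H^{2}+|\delta_\Omega(z)-\delta_\Omega(w)|}{\delta_\Omega(z)+\delta_\Omega(w)}\right)+O(1)
\end{align*}
for all $z,w$ sufficiently close to $\partial\Omega$, where $\pi$ denotes normal projection to $\partial\Omega$. I would obtain this by combining Graham's local asymptotics with a localization argument comparing $\Omega$ to the Siegel-type model $\{\mathrm{Im}\,z_n>|z'|^{2}\}$ at each boundary point; the upper bound comes from holomorphic discs tangent to $\partial\Omega$ with carefully chosen size, and the lower bound from a plurisubharmonic peak-function construction exploiting strong pseudoconvexity.

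With that estimate in hand, Gromov hyperbolicity follows by verifying the four-point inequality: for any four points, the largest and middle of the three pair-sums $K_\Omega(x_i,x_j)+K_\Omega(x_k,x_\ell)$ differ by a uniform additive constant. Substituting the asymptotic formula reduces this to an elementary inequality for $\log(1+t)$, after separately handling the case that some point lies in a fixed compact core of $\Omega$ where $K_\Omega$ is bi-Lipschitz to the Euclidean metric. The identification $\partial_G\Omega\simeq\partial\Omega$ then proceeds along standard lines: any $K_\Omega$-Cauchy sequence converges in the Euclidean metric because $K_\Omega(z_0,z)\geq\tfrac{1}{2}\log(1/\delta_\Omega(z))-C$, and conversely any two sequences converging in the Euclidean sense to a common $\zeta\in\partial\Omega$ have divergent mutual Gromov product from a base point, as one reads off directly from the formula.

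Finally, to place $d_H$ in the canonical visual class on $\partial_G\Omega$, I would directly compute the Gromov product: for $\xi,\eta\in\partial\Omega$ and a fixed base point $o$,
\begin{align*}
(\xi|\eta)_o=\lim_{z\to\xi,\,w\to\eta}\tfrac{1}{2}\bigl(K_\Omega(o,z)+K_\Omega(o,w)-K_\Omega(z,w)\bigr)\;\asymp\;\log\frac{1}{|\xi-\eta|_H},
\end{align*}
so $e^{-(\xi|\eta)_o}\asymp|\xi-\eta|_H$, exhibiting $d_H$ as a visual metric with parameter $1$ and hence placing it in the canonical snowflake class. The main obstacle in the whole program will be the first step: uniformly controlling the error term in the asymptotic formula up to the boundary, without the benefit of convexity, requires delicate comparison with the Heisenberg model and careful use of a strictly plurisubharmonic defining function. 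Once this is achieved, everything else reduces to formal manipulation of that single estimate.
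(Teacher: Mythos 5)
This theorem is quoted, not proved, in the paper (it is Theorem~1.4 of Balogh--Bonk), so I am comparing your sketch against the original Balogh--Bonk argument. Your overall architecture is exactly theirs: localize to an anisotropic box structure at each boundary point, prove a sharp additive estimate for $K_\Omega$ in terms of the Carnot--Carath\'eodory geometry of $\partial\Omega$ and the boundary distance $\delta_\Omega$, then verify the four--point inequality for the model pseudodistance and read off the visual metric from the Gromov product. This is the right strategy, and your final computation $e^{-(\xi|\eta)_o}\asymp d_H(\xi,\eta)$ giving visual parameter $1$ is exactly the right conclusion.

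However, the central asymptotic formula you propose is not correct, and this is not a small matter of normalization: the quantity
\begin{align*}
\tfrac{1}{2}\log\Bigl(1+\frac{|\pi(z)-\pi(w)|_H^{2}+|\delta_\Omega(z)-\delta_\Omega(w)|}{\delta_\Omega(z)+\delta_\Omega(w)}\Bigr)
\end{align*}
is \emph{bounded} whenever $\pi(z)=\pi(w)$, since then the fraction inside the logarithm is at most $1$. But if $z,w$ lie on a common inward normal with $\delta_\Omega(z)=\epsilon$ and $\delta_\Omega(w)=1$, the Kobayashi distance is $\approx\tfrac{1}{2}\log(1/\epsilon)\to\infty$; your formula stays bounded. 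The denominator must involve the \emph{product} $\delta_\Omega(z)\delta_\Omega(w)$ (so that the ratio, not the difference, of the boundary distances appears), not their sum. The correct Balogh--Bonk pseudodistance is
\begin{align*}
g(z,w)=2\log\frac{d_H(\pi(z),\pi(w))+\max\{\delta_\Omega(z)^{1/2},\,\delta_\Omega(w)^{1/2}\}}{\bigl(\delta_\Omega(z)\delta_\Omega(w)\bigr)^{1/4}},
\end{align*}
and the main estimate is $|K_\Omega(z,w)-g(z,w)|\leq C$. Note the structure: the $d_H$ term appears to the first power (not squared), and the normalization by $(\delta_\Omega(z)\delta_\Omega(w))^{1/4}=\sqrt{h(z)h(w)}$ with $h=\delta_\Omega^{1/2}$ is what makes $g$ behave like a ``distance to the boundary tree''. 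Since the four--point verification and the Gromov product asymptotics you outline in stages (ii) and (iii) depend entirely on having the correct model, as written the argument does not go through; with the correct $g$ in place, the remaining steps are essentially the ones you describe (Balogh--Bonk check Gromov hyperbolicity of $g$ directly via a tree comparison using dyadic decomposition of the boundary, and then derive the visual-metric statement exactly from $(\xi|\eta)_o\asymp\log(1/d_H(\xi,\eta))$).
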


For a bounded strongly pseudoconvex domain $\Omega\subset \mathbb{C}^n$ with $C^2$-smooth boundary,
since
$$C_{1}|p-q| \leq d_{H}(p, \:q) \leq C_{2}|p-q|^{1 / 2}, \quad \text {for} \:\: p, \:q \in \partial \Omega,$$
we deduce that the map $\partial_G \Omega \rightarrow \partial \Omega$ defined by Theorem \ref{cc} is bi-H\"older continuous (See \cite{balogh2000gromov} and the references given there for more information).
Thus the visual metric of $\partial_{G}\Omega$ and the Euclidean metric of $\partial\Omega$ are bi-H\"{o}lder
equivalent to each other.

On the other hand, Bracci, Gaussier and Zimmer \cite{Homeomorphic} get the following result on convex domains.
\begin{thm}[Theorem 1.4, \cite{Homeomorphic}]\label{dd}
Let $\Omega$ be a $\mathbb{C}$-proper convex domain on $\mathbb{C}^{n}.$ If $\left(\Omega, \: K_{\Omega}\right)$ is Gromov hyperbolic, then the identity map id: $\Omega \rightarrow \Omega$ extends to a homeomorphism (to simplify notation, still use the same name) id: $\overline{\Omega}^{\star} \rightarrow \overline{\Omega}^{G}$.
where $\overline{\Omega}^{\star}$ denotes the Euclidean end compactification of $\Omega$ and $\overline{\Omega}^G$ is the Gromov compactification of the metric space $(\Omega,\:K_\Omega)$.
\end{thm}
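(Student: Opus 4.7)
My plan is to construct a homeomorphic extension $\Phi:\overline{\Omega}^{G}\to\overline{\Omega}^{\star}$ of the identity. For a Gromov boundary point $\xi\in\partial_{G}\Omega$ represented by a Gromov sequence $(x_n)\subset\Omega$ (i.e.\ $\lim_{n,m}(x_n|x_m)_o=+\infty$ for a fixed base point $o$), I would set $\Phi(\xi):=\lim_{n}x_n$ in the Euclidean topology of $\overline{\Omega}^{\star}$. The program splits into three tasks: (i) existence of the limit and independence of the chosen representative; (ii) bijectivity onto the appropriate boundary component; (iii) continuity in both directions.

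Task (i) is where Corollary \ref{z1} and Lemma \ref{est14} (or its $\mathbb{C}$-proper convex variant, guaranteed by Theorem \ref{thm-Z} together with an exhaustion argument that reduces to the bounded Dini-smooth setting) do the bulk of the work. Let $\gamma_{n,m}$ be a Kobayashi geodesic joining $x_n$ to $x_m$. Gromov hyperbolicity forces any $\omega\in\gamma_{n,m}$ to satisfy $K_{\Omega}(o,\omega)\geq (x_n|x_m)_o-O(\delta)$, and the standard logarithmic upper estimate for the Kobayashi distance on a convex domain gives $\delta_{\Omega}(\omega)\leq c\,e^{-c'(x_n|x_m)_o}$ uniformly on $\gamma_{n,m}$. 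Now I pick a point $\omega^{\ast}\in\gamma_{n,m}$ that bisects the Euclidean arc length, i.e.\
$$
l_{d}\!\left([x_n,\omega^{\ast}]\right)=l_{d}\!\left([\omega^{\ast},x_m]\right)=\tfrac{1}{2}\,l_{d}(\gamma_{n,m}).
$$
Applying Lemma \ref{est14} at $\omega^{\ast}$ yields $l_{d}(\gamma_{n,m})\leq 2\bigl(\delta_{\Omega}(\omega^{\ast})/\tilde{C}\bigr)^{1/\alpha}$, which tends to $0$ as $(x_n|x_m)_o\to\infty$. Since $|x_n-x_m|\leq l_{d}(\gamma_{n,m})$, the sequence $(x_n)$ is Euclidean Cauchy; the same estimate applied across two representatives $(x_n),(y_n)$ of $\xi$ shows that the limit is independent of the representative.

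For task (ii), surjectivity is obtained by fixing $\xi\in\partial\Omega$, producing a Kobayashi geodesic ray $\gamma:[0,\infty)\to\Omega$ from $o$ approaching $\xi$ (using properness, completeness and the Gromov hyperbolic geodesic-ray construction), and invoking (i) to see that this ray represents a Gromov boundary point whose image is $\xi$. Injectivity requires the harder converse: two Gromov sequences with a common Euclidean limit $\xi\in\partial\Omega$ must satisfy $(x_n|y_n)_o\to+\infty$. Here I would use a visibility/horosphere argument peculiar to convex domains, which says roughly that a Kobayashi geodesic joining two points very close to the same boundary point must go deep into $\Omega$, thereby forcing $K_{\Omega}(x_n,y_n)\ll K_{\Omega}(o,x_n)+K_{\Omega}(o,y_n)$. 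Task (iii) is then immediate: the quantitative estimate from (i) propagates Gromov convergence of boundary points to Euclidean convergence, while the converse continuity is exactly the injectivity input applied uniformly.

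The main obstacle, as indicated above, is the converse direction (\emph{Euclidean-close implies Gromov product large}) needed for injectivity and for $\Phi^{-1}$-continuity. The forward direction is essentially bookkeeping on top of Corollary \ref{z1} and Lemma \ref{est14}; but the reverse requires genuine input from convex geometry near $\partial\Omega$, typically a visibility lemma or a horosphere analysis, neither of which follows from the Gehring-Hayman estimate alone. This is the step at which Gromov hyperbolicity is used substantively rather than merely as a source of $\delta$-thin triangles, and it is where I would expect to spend most of the technical effort.
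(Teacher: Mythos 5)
The theorem is quoted, not proved, in this paper: it is stated verbatim as Theorem 1.4 of \cite{Homeomorphic} and appears only to motivate Theorem \ref{app}. Your sketch is therefore best compared with the proof of Theorem \ref{app}, and the forward direction of your task (i) is essentially that argument restricted to the smooth, bounded case.

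The decisive gap is that Theorem \ref{dd} assumes only that $\Omega$ is $\mathbb{C}$-proper, convex, and Gromov hyperbolic in the Kobayashi metric, while your proof leans on Corollary \ref{z1} and Lemma \ref{est14}, both of which require a \emph{bounded} domain with \emph{Dini-smooth} boundary. A $\mathbb{C}$-proper convex domain may be unbounded and may have no boundary regularity; Theorem \ref{thm-Z} then yields $m$-convexity (and only when $\Omega$ is bounded), not Dini-smoothness, so the hypotheses of Lemma \ref{est14} are simply not available. The proposed ``exhaustion argument that reduces to the bounded Dini-smooth setting'' does not close this gap: the Kobayashi metric, its geodesics, and the constants $\tilde C,\alpha$ in Lemma \ref{est14} all depend on the approximating domain, and you give no reason why these should stabilize in the limit. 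You also handle task (i) entirely through Euclidean Cauchy sequences, which says nothing about Gromov sequences escaping to infinity; yet the end compactification $\overline{\Omega}^{\star}$ appears in the statement precisely to accommodate that case. Finally, you flag the converse direction (Euclidean proximity forcing $(x_n|y_n)_o\to\infty$) as unresolved and defer it to an unspecified visibility/horosphere argument; that step is genuinely the heart of the theorem in \cite{Homeomorphic}, and it is established there by methods independent of, and not recoverable from, the Gehring--Hayman machinery developed in the present paper.
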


Our main result in this direction is as follows.
\begin{thm}\label{app}
Suppose that $\Omega$ is a bounded domain in $\mathbb{C}^n (n\geq 2)$ and suppose that $\Omega$ satisfies either\\
$($a$)$ $\Omega$ is convex with Dini-smooth boundary and $(\Omega,\: K_{\Omega})$ is Gromov hyperbolic; or \\
$($b$)$ $\Omega$ is strongly pseudoconvex with $C^2$-smooth boundary. \\
Then the identity map $id: \Omega \rightarrow \Omega$ extends to a bi-H\"{o}lder homeomorphism (for simplicity of notation, use the same name)
\begin{align*}
id:(\partial\Omega,\: |\cdot|)\rightarrow (\partial_{G}\Omega,\: \rho_G)
\end{align*}
between the boundaries, where $\rho_G$ belongs to the visual metrics class on the Gromov boundary of $(\Omega, \: K_{\Omega})$.
\end{thm}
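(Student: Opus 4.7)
The plan is to reduce Theorem \ref{app} to a two-sided logarithmic comparison between the Gromov product and the Euclidean distance on the boundary. Fix a basepoint $o\in\Omega$. Since $(\Omega, K_\Omega)$ is Gromov hyperbolic, every visual metric satisfies $\rho_G(\xi,\eta) \asymp e^{-\epsilon(\xi|\eta)_o}$ for some sufficiently small $\epsilon>0$, so it is enough to find positive constants $a,b,C$ with
\[
a\log\tfrac{1}{|p-q|} - C \leq (p|q)_o \leq b\log\tfrac{1}{|p-q|} + C
\]
for all $p,q\in\partial\Omega$ with $|p-q|$ sufficiently small; exponentiating yields $|p-q|^{b\epsilon} \lesssim \rho_G(p,q) \lesssim |p-q|^{a\epsilon}$, which is the bi-H\"{o}lder claim.

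For any $p\neq q\in\partial\Omega$, take sequences $x_n\to p$, $y_n\to q$ in $\Omega$ and Kobayashi geodesics $[x_n,y_n]$. Two basic ingredients are needed: (i) in a Gromov hyperbolic geodesic space, $(x_n|y_n)_o = K_\Omega(o,[x_n,y_n]) + O(\delta)$, so the Gromov product is controlled up to bounded error by $K_\Omega(o,\omega)$ at the point $\omega\in[x_n,y_n]$ closest to $o$; and (ii) the two-sided boundary-distance estimate $\tfrac{1}{2}\log(1/\delta_\Omega(\omega)) - C_1 \leq K_\Omega(o,\omega) \leq C_2+C_3\log(1/\delta_\Omega(\omega))$, which holds for bounded domains with Dini-smooth (resp.\ $C^2$) boundary. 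Thus the sandwich estimate reduces to controlling $\delta_\Omega$ along $[x_n,y_n]$.

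For the lower bound on $(p|q)_o$, Theorem \ref{thm} (resp.\ Theorem \ref{thm-2}) yields $l_d([x_n,y_n])\leq c_1|x_n-y_n|^{c_2}$, so every $\omega\in[x_n,y_n]$ lies within this Euclidean distance of $x_n$; passing to the limit, $\delta_\Omega(\omega)\lesssim |p-q|^{c_2}$, hence $K_\Omega(o,\omega) \geq (c_2/2)\log(1/|p-q|) - O(1)$. For the upper bound, let $\omega_n$ be the Euclidean-length midpoint of $[x_n,y_n]$; then $l_d([x_n,\omega_n])\wedge l_d([\omega_n,y_n])\geq |x_n-y_n|/2$, and Lemma \ref{est14} (together with its analogue in the strongly pseudoconvex case) gives $\delta_\Omega(\omega_n)\geq \tilde C|x_n-y_n|^\alpha$. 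Consequently $K_\Omega(o,\omega_n) \leq C_3\alpha\log(1/|p-q|) + O(1)$, and since $K_\Omega(o,[x_n,y_n]) \leq K_\Omega(o,\omega_n)$, the upper bound on $(p|q)_o$ follows.

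The principal obstacle is to locate, on the (quasi-)geodesic, a point whose distance to $\partial\Omega$ enjoys a quantitative lower bound in terms of $|p-q|$; this is exactly what Lemma \ref{est14} provides via the Euclidean-length midpoint. For case (b) one may alternatively invoke Theorem \ref{cc} together with the bi-H\"{o}lder relation $|p-q|\leq d_H(p,q)\leq C|p-q|^{1/2}$ between the Carnot-Carath\'{e}odory and Euclidean metrics on $\partial\Omega$, but the argument above applies uniformly to both cases once a separation-type lower bound for $\delta_\Omega$ is in hand.
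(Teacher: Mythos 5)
Your proposal is correct and mirrors the paper's argument: both sides of the two-sided logarithmic comparison of the Gromov product are obtained exactly as in the paper, via the Gehring--Hayman estimates (Theorems \ref{thm}, \ref{thm-2}) on one side and the separation-type Lemmas \ref{est14}, \ref{sep} on the other, combined with the standard bounds $K_\Omega(o,z)\asymp \log(1/\delta_\Omega(z))$ and the identity $(x|y)_o = K_\Omega(o,[x,y])+O(\delta)$. The one point you leave implicit, which the paper verifies carefully before the quantitative estimate, is that the identity map genuinely extends to a bijection $\partial\Omega\to\partial_G\Omega$ (a sequence converges in the Euclidean topology to a boundary point iff it is a Gromov sequence); this is needed before one can even write $(p|q)_o$ for $p,q\in\partial\Omega$, though it follows from the very same two estimates applied to interior sequences and should be stated explicitly.
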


\begin{rmk}
\noindent
\begin{enumerate}
 \item Although the assertion for Case (b) in Theorem \ref{app} follows easily from Theorem \ref{cc}, our approach is different with \cite{balogh2000gromov}. Our proof is based on the Gehring-Hayman type theorem and the Separation property.
 \item Gromov boundary equipped with any two visual metrics are power quasisymmetrically and so bi-H\"{o}lder equivalent to each other. Thus the boundary extension of the identity map in Theorem \ref{app} is bi-H\"{o}lder with respect to any visual metric on the Gromov boundary.
\end{enumerate}
\end{rmk}
The final goal of this paper is to apply this boundary correspondence to investigate boundary extension results for biholomorphisms, and more general rough quasi-isometries with respect to the Kobayashi metrics between the domains. In \cite{balogh2000gromov}, Balogh and Bonk generalized this kind of results for rough quasi-isometries in the Kobayashi metrics. In \cite{Homeomorphic}, Bracci, Gaussier and Zimmer proved the following result:
\begin{thm} Let $\Omega_1$ and $\Omega_2$ be domains in $\mathbb{C}^{n}$. We assume:
\begin{enumerate}
 \item $\Omega_1$ is either a bounded, $C^{2}$-smooth strongly pseudoconvex domain, or a convex $\mathbb{C}$-proper domain such that $\left(\Omega_1, K_{\Omega_1}\right)$ is Gromov hyperbolic,
 \item $\Omega_2$ is convex.
\end{enumerate}
Then every roughly quasi-isometric homeomorphism $F: \left(\Omega_1, \:K_{\Omega_1}\right) \rightarrow\left(\Omega_2, \: K_{\Omega_2}\right)$ extends to homeomorphism (use the same name) $F: \overline{\Omega}_1^{\star}\rightarrow \overline{\Omega}_2^{\star}$,
where $\overline{\Omega}_i^{\star}$ is the Euclidean end compactification of $\Omega_i, \: i=1, 2$.
\end{thm}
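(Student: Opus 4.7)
The plan is to split the extension of $F$ into three homeomorphic pieces: an identification $\overline{\Omega}_1^\star \cong \overline{\Omega}_1^G$, the canonical boundary extension $\overline{F}: \overline{\Omega}_1^G \to \overline{\Omega}_2^G$ of $F$ as a rough quasi-isometry between Gromov hyperbolic spaces, and a second identification $\overline{\Omega}_2^G \cong \overline{\Omega}_2^\star$. First I would check that $(\Omega_2, K_{\Omega_2})$ is itself Gromov hyperbolic: in case (1), $(\Omega_1, K_{\Omega_1})$ is Gromov hyperbolic (by hypothesis in the convex case, and by Theorem \ref{cc} in the strongly pseudoconvex case), and the existence of a rough quasi-isometry between length spaces transports Gromov hyperbolicity to $(\Omega_2, K_{\Omega_2})$. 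The general theory of Gromov hyperbolic spaces then extends $F$ to a homeomorphism $\overline{F}: \overline{\Omega}_1^G \to \overline{\Omega}_2^G$ between the Gromov compactifications.

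Next I would identify the Euclidean and Gromov compactifications on each side. For $\Omega_1$ strongly pseudoconvex, Theorem \ref{app}(b) (or alternatively Theorem \ref{cc} together with the bi-H\"older comparison of $d_H$ with $|\cdot|$ on $\partial \Omega_1$) yields a homeomorphism $\overline{\Omega}_1 \to \overline{\Omega}_1^G$ extending the identity, and since $\Omega_1$ is bounded we have $\overline{\Omega}_1^\star = \overline{\Omega}_1$. For $\Omega_1$ a convex $\mathbb{C}$-proper domain with $(\Omega_1, K_{\Omega_1})$ Gromov hyperbolic, Theorem \ref{dd} directly supplies $\overline{\Omega}_1^\star \cong \overline{\Omega}_1^G$. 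On the $\Omega_2$ side, $\Omega_2$ is convex and $(\Omega_2, K_{\Omega_2})$ is Gromov hyperbolic from the previous step, so once I know $\Omega_2$ is $\mathbb{C}$-proper I may apply Theorem \ref{dd} again to obtain $\overline{\Omega}_2^G \cong \overline{\Omega}_2^\star$. Composing the three homeomorphisms gives a map which agrees with $F$ on $\Omega_1$ and is a homeomorphism $\overline{\Omega}_1^\star \to \overline{\Omega}_2^\star$.

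The hard part will be verifying the standing hypothesis of Theorem \ref{dd} for the target, namely that $\Omega_2$ is $\mathbb{C}$-proper. My plan is to deduce this from Kobayashi hyperbolicity: since $\Omega_1$ is Kobayashi hyperbolic (being Gromov hyperbolic for $K_{\Omega_1}$ forces $K_{\Omega_1}$ to be a genuine distance), the rough quasi-isometry $F$ forces $K_{\Omega_2}$ to be a genuine distance as well, so $\Omega_2$ admits no Kobayashi-isometric copy of $\mathbb{C}$; by the classical result that a convex domain is Kobayashi hyperbolic if and only if it contains no complex affine line, this yields $\mathbb{C}$-properness. One subtlety worth flagging is the distinction between the Gromov compactification, which adds only asymptotic classes of Kobayashi-geodesic rays, and the Euclidean end compactification, which in the unbounded convex setting may add ends at infinity in $\mathbb{C}^n$; Theorems \ref{app} and \ref{dd} are precisely what reconciles this discrepancy under the relevant hypotheses, so that the final composition is well-defined and continuous.
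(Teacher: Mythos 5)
The theorem you are asked to prove is not actually proved in this paper: the authors state it as a result of Bracci, Gaussier and Zimmer, citing \cite{Homeomorphic}, and use it only as a point of comparison before their own bi-H\"older strengthening, Corollary \ref{cor2}. So there is no in-paper proof to measure your attempt against. That said, your plan of sandwiching the Gromov-boundary extension of $F$ between identifications $\overline{\Omega}_1^\star \cong \overline{\Omega}_1^G$ and $\overline{\Omega}_2^G \cong \overline{\Omega}_2^\star$ is exactly the strategy the paper uses for Corollary \ref{cor2} (via Propositions \ref{bonk}, \ref{pqs} and Theorem \ref{app}); the extra difficulty here is that $\Omega_2$ is merely convex, so the entire burden falls on verifying the hypotheses of Theorem \ref{dd} for the target.

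There is a genuine gap in your $\mathbb{C}$-properness step. You assert that the rough quasi-isometry $F$ ``forces $K_{\Omega_2}$ to be a genuine distance,'' but a rough quasi-isometry carries an additive error $\kappa$: from $\tfrac{1}{\lambda}K_{\Omega_1}(x,y)-\kappa \le K_{\Omega_2}(F(x),F(y))$, the equality $K_{\Omega_2}(F(x),F(y))=0$ only gives $K_{\Omega_1}(x,y)\le \lambda\kappa$, not $x=y$. A bijective rough quasi-isometry can perfectly well map a genuine metric onto a degenerate pseudometric. This also means you cannot yet invoke Theorem \ref{gro2}, which requires both spaces to be \emph{geodesic metric} spaces, so even the first step of your plan (transporting Gromov hyperbolicity to $\Omega_2$) already presupposes $\mathbb{C}$-properness. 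The argument that actually works uses properness of $(\Omega_1,K_{\Omega_1})$: by Hopf--Rinow it is a proper metric space (complete, locally compact, length). If $\Omega_2$ contained a complex affine line $L$, then $K_{\Omega_2}$ vanishes on $L$, so $F^{-1}(L)$ has $K_{\Omega_1}$-diameter at most $\lambda\kappa$, hence is relatively compact in $\Omega_1$; being also closed in $\Omega_1$ (preimage of a closed set under a homeomorphism), $F^{-1}(L)$ is compact, so $L=F(F^{-1}(L))$ would be compact, contradicting that $L$ is an unbounded affine line. Only after establishing $\mathbb{C}$-properness this way can you invoke Theorem \ref{gro2} and then Theorem \ref{dd} on the $\Omega_2$ side.
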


As a consequence of Theorem \ref{app}, we prove the following bi-H\"{o}lder homeomorphism extension result.
\begin{cor}\label{cor2}
For $i=1, 2$, suppose that $\Omega_i \subset \mathbb{C}^n ,\: n\geq 2$, are bounded, and suppose that $\Omega_i$ satisfy either \\
\indent($a$)$ \: \Omega_i$ is a convex domain with Dini-smooth boundary and $(\Omega_i, \:K_{\Omega_i})$ is Gromov hyperbolic; or\\
\indent($b$)$ \: \Omega_i$ is a strongly pseudoconvex domain with $C^2$-smooth boundary.\\
Let $f:\Omega_1\rightarrow\Omega_2$ be a homeomorphism that is a rough quasi-isometry with respect to the Kobayashi metrics $K_{\Omega_i}$. Then $f$ has a homeomorphic extension $\bar{f}: \overline{\Omega}_{1} \rightarrow \overline{\Omega}_{2}$ such that the induced boundary map $\left.\bar{f}\right|_{\partial \Omega_{1}}: \partial \Omega_{1} \rightarrow \partial \Omega_{2}$ is bi-H\"{o}lder with respect to the Euclidean metric.
\end{cor}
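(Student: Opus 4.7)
The plan is to factor the desired extension $\bar{f}$ through the Gromov boundaries, using Theorem \ref{app} to translate between the Euclidean and Gromov pictures and invoking the standard extension theorem for rough quasi-isometries of Gromov hyperbolic spaces to produce the bi-H\"{o}lder boundary map.

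First I would note that in either case $(\Omega_i, K_{\Omega_i})$ is a proper geodesic Gromov hyperbolic space: in case $(a)$ this is part of the hypothesis, and in case $(b)$ it is Theorem \ref{cc}. Theorem \ref{app} then supplies, for $i = 1, 2$, a bi-H\"{o}lder homeomorphism $\iota_i : (\partial \Omega_i, |\cdot|) \to (\partial_G \Omega_i, \rho_G)$, together with the underlying fact that the identity on $\Omega_i$ extends to a homeomorphism $\overline{\Omega}_i \to \overline{\Omega}_i^{\,G}$ (as in Theorems \ref{cc} and \ref{dd}).

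Next, since $f : (\Omega_1, K_{\Omega_1}) \to (\Omega_2, K_{\Omega_2})$ is a rough quasi-isometric homeomorphism between proper geodesic Gromov hyperbolic spaces, the classical boundary extension theorem shows that $f$ extends to a power quasi-symmetric, hence bi-H\"{o}lder, homeomorphism $\partial f : (\partial_G \Omega_1, \rho_G) \to (\partial_G \Omega_2, \rho_G)$ with respect to the visual metrics. Setting $\bar{f} := f$ on $\Omega_1$ and $\bar{f}|_{\partial \Omega_1} := \iota_2^{-1} \circ \partial f \circ \iota_1$ yields a bijection $\overline{\Omega}_1 \to \overline{\Omega}_2$ whose boundary restriction is a composition of three bi-H\"{o}lder maps, and therefore bi-H\"{o}lder with respect to the Euclidean metrics.

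It remains to verify the global continuity of $\bar{f}$. For any sequence $x_n \in \Omega_1$ with $x_n \to p \in \partial \Omega_1$ in Euclidean distance, Theorem \ref{app} gives $\iota_1(x_n) \to \iota_1(p)$ in the Gromov compactification of $\Omega_1$. The rough quasi-isometry $f$ preserves convergence to the Gromov boundary, so $\iota_2(f(x_n)) \to \partial f(\iota_1(p)) = \iota_2(\bar{f}(p))$ in $(\partial_G \Omega_2, \rho_G)$, and applying the continuous map $\iota_2^{-1}$ gives $f(x_n) \to \bar{f}(p)$ in Euclidean distance. Applying the same argument to $f^{-1}$, which is also a rough quasi-isometry, shows that $\bar{f}^{-1}$ is continuous. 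The main technical point in this plan is the compatibility of the Euclidean-boundary identification in Theorem \ref{app} with Gromov convergence of interior sequences; once this is granted, the rest is a routine composition of standard Gromov-hyperbolic facts.
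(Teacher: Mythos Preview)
Your proposal is correct and follows essentially the same route as the paper: factor the boundary extension through the Gromov boundaries, using Theorem \ref{app} for the bi-H\"older identifications $\iota_i$ and the Bonk--Schramm extension theorem (the paper's Proposition \ref{bonk}, together with Proposition \ref{pqs}) for the induced map $\partial f$. Your continuity verification via sequences is slightly more explicit than the paper's, but the argument is otherwise identical.
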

Note that every biholomorphism between the complex domains is an isometry with respect to the Kobayashi metrics. Therefore, Corollary \ref{cor2} clearly holds for biholomorphisms between the complex domains.


\bigskip
The rest of this paper is organized as follows. In Section 2 we recall some necessary definitions and preliminary results. Section 3 is focus on the proofs of Theorem \ref{thm} and Lemma \ref{est14}. The proofs of Theorem \ref{thm-2} and Corollary \ref{cor} are presented in Section 4. At last we prove Theorem \ref{app} and Corollary \ref{cor2} in Section 5.
\bigskip
\section{\noindent{{\bf Preliminaries}}}
\subsection{Notation}\

(1) \:For $z\in\mathbb{C}^n$, let $|\cdot|$ and $d$ denote the standard Euclidean
norm, and let $|z_1-z_2 |$ and $d(z_1,z_2)$ be the standard Euclidean
distance of $z_1,z_2\in \mathbb{C}^n$.

(2) \:Given an open set $\Omega\subsetneq\mathbb{C}^n,\:x\in\Omega$ and
$v\in\mathbb{C}^n\backslash\{0\}$, denote
$$\delta_{\Omega}(x)=\inf\left\{d(x,\:\xi):\xi\in\partial \Omega\right\}$$
as before, and denote
$$\delta_{\Omega}(x,v)=\inf\{d(x,\:\xi):\xi\in\partial \Omega\cap(x+\mathbb{C}v)\}.$$

(3) \:For any curve $\sigma: [a,\:b]\rightarrow \Omega$, its
Euclidean length $l_d(\sigma)$ is defined by
$$
l_d(\sigma)=\sup\sum^n_{\nu=1}\left|\sigma(t_\nu)-\sigma(t_{\nu-1})\right|,
$$
where the supremum is taken over all possible partitions ${a=t_0\leq t_1\leq \cdots \leq t_n=b}$ of the interval $[a,\:b]$ and all $n\in \mathbb N$.

(4) \:For any $z_0 \in \mathbb C^n$ and $\epsilon >0$, we denote by $B_{\epsilon}(z_0)$ or $B(z_0,\epsilon)$ the open ball
$\{z\in \mathbb C^n| \:|z-z_0|<\epsilon\}$.
\subsection{$M$-convex domains and strongly pseudoconvex domains}\

In \cite{MercerComplex}, Mercer introduced the class of $m$-{\it convex} domains. Now we give the definition of $m$-convex domains as follows.
\begin{defn}\label{def2}
A bounded convex domain $\Omega\subset \mathbb{C}^n$ with $n\geq 2$ is called $m$-convex for some $m\geq 1$ if there exists $C>0$ such that
\begin{align}\label{def}
\delta_{\Omega}(z;v)\leq C\: \delta_{\Omega}^{\frac{1}{m}}(z)
\end{align} for any $z\in\Omega, \: v\in\mathbb{C}^n$.
\end{defn}
Note that the $m$-convexity is related to the finite type by the following proposition.
\begin{proposition}[\cite{Zimmer2016Gromov}, \:Proposition 9.1]
Given a bounded convex domain $\Omega \subset \mathbb C^n (n\geq 2)$ with smooth boundary, then $\Omega$ is $m$-convex for some $m\in\mathbb{N}$ if and only if $\partial\Omega$ has finite line type in the sense of D'Angelo.
\end{proposition}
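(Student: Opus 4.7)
The plan is to prove the two implications separately, using convexity as the bridge that converts boundary contact-order statements into interior distance estimates along complex lines.

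For the $(\Rightarrow)$ direction, assume $\Omega$ is $m$-convex. Fix $p\in\partial\Omega$ and a unit vector $v\in\mathbb{C}^n$, and let $\nu_p$ be the inward unit normal at $p$. For the interior points $z_k:=p+\tfrac{1}{k}\nu_p$ one has $\delta_\Omega(z_k)\asymp 1/k$, so the $m$-convex bound gives $\delta_\Omega(z_k;v)\leq C\,k^{-1/m}$. Choosing a smooth defining function $\rho$ for $\Omega$ and Taylor-expanding along the complex line $p+\mathbb{C}v$, this interior estimate forces the order of vanishing of the restriction $\rho|_{p+\mathbb{C}v}$ at $\zeta=0$ to be at most $m$. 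Since $p$ and $v$ were arbitrary, the D'Angelo line type is bounded by $m$.

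For the $(\Leftarrow)$ direction, assume the D'Angelo line type of $\partial\Omega$ is at most $\tau$. Given $z\in\Omega$ and $v\in\mathbb{C}^n\setminus\{0\}$, let $p\in\partial\Omega$ realize $\delta_\Omega(z)$, and decompose $v=v_N+v_T$ into its complex-normal and complex-tangential components at $p$. The contribution of $v_N$ to $\delta_\Omega(z;v)$ is at most linear in $\delta_\Omega(z)$ by smoothness of $\partial\Omega$. For $v_T$, the finite-line-type hypothesis together with smoothness and convexity yields a boundary lower bound of the form $\rho(p+\zeta v_T)\geq c|\zeta|^\tau$ for small $\zeta$, which is a standard consequence of finite line type for convex domains (compare the estimates of Bruna--Nagel--Wainger and McNeal). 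Convexity of $\mathbb{C}^n\setminus\Omega$ then allows one to transfer this boundary estimate along the shifted complex line $z+\mathbb{C}v$ and conclude $\delta_\Omega(z;v)\leq C'\,\delta_\Omega(z)^{1/\tau}$.

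The main technical obstacle lies in the $(\Leftarrow)$ direction: one must uniformly pass from a pointwise contact-order estimate at the boundary point $p$ to an estimate along the perturbed complex line $z+\mathbb{C}v$, controlling the error introduced by replacing $v$ with $v_T$ and $p$ with $z$. Both smoothness (to linearize the geometry to leading order) and convexity (to ensure one-sided comparisons between the tangent line at $p$ and the affine line through $z$) are essential. Uniformity of the final constant over all $z\in\Omega$ and $v\in\mathbb{C}^n$ then follows from compactness of $\partial\Omega$ and continuous dependence of the local estimates on $p\in\partial\Omega$.
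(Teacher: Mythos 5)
The paper does not prove this proposition; it is quoted verbatim from Zimmer \cite{Zimmer2016Gromov}, Proposition 9.1, and used as a black box, so there is no internal argument to compare your proposal against.

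On its own terms your sketch has the right overall shape, and the $(\Rightarrow)$ direction is essentially sound, but the $(\Leftarrow)$ direction rests on a false intermediate claim. It is \emph{not} true that finite line type yields a pointwise bound $\rho(p+\zeta v_T)\geq c|\zeta|^{\tau}$ for all small $\zeta$: the smooth convex domain $\Omega=\{\Re z_1 + (\Re z_2)^4<0\}$ has line type $4$ at $0$ in the direction $e_2$, yet $\rho(\zeta e_2)=(\Re\zeta)^4$ vanishes identically along the imaginary $\zeta$-axis. What is true, and what the estimate on $\delta_\Omega(z;v)$ actually requires (since that quantity is an \emph{infimum} of $|\zeta|$ over the whole complex line), is that the nonzero Taylor jet of $\rho|_{p+\mathbb{C}v_T}$ of degree at most $\tau$ supplies \emph{some} direction $\theta_0$ with $\rho(p+re^{i\theta_0}v_T)\geq c\,r^{\tau}$ for small $r$; equivalently $\max_{|\zeta|\leq r}\rho(p+\zeta v_T)\geq c\,r^{\tau}$. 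Convexity of $\Omega$ (not of $\mathbb{C}^n\setminus\Omega$, which is never convex for a bounded domain) then transfers this along the shifted line: a supporting-hyperplane estimate gives $\rho(z+re^{i\theta_0}v_T)\geq \rho(p+re^{i\theta_0}v_T)-C\,\delta_\Omega(z)$, which is positive once $r\gtrsim\delta_\Omega(z)^{1/\tau}$, and this yields the desired $m$-convexity bound. Your uniformity step also needs sharpening: the vanishing order $\tau_{p,v}$ is only upper semicontinuous in $(p,v)$, so the quantity to which compactness should be applied is the norm of the degree-$\tau$ jet of $\rho|_{p+\mathbb{C}v}$, which is continuous on $\partial\Omega\times S^{2n-1}$ and nowhere zero once the line type is bounded by $\tau$.
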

\begin{defn}
A domain $\Omega=\{z|\rho(z)<0\}$ in $\mathbb{C}^n (n\geq 2)$ with $C^2$-smooth boundary is called {\it strongly pseudoconvex} if the Levi
form of the boundary
$$
L_{\rho}(p ; \: v)=\sum_{\nu, \mu=1}^{n} \frac{\partial^{2} \rho}{\partial z_{\nu} \partial \bar{z}_{\mu}}(p) v_{\nu} \bar{v}_{\mu}, \quad \text { for } v=\left(v_{1}, \ldots, v_{n}\right) \in \mathbb{C}^{n}
$$
is positive definite for every $p\in\partial\Omega$.
\end{defn}
It's well known that a strongly pseudoconvex domain is locally biholomorphic to a strongly convex domain. Thus strongly pseudoconvex domains have many properties similar to $2$-convex domains.
\subsection{Kobayashi metrics}\label{km}\

Given a domain $\Omega \subset \mathbb{C}^{n} (n\geq 2)$, the (infinitesimal)
Kobayashi metric is the pseudo-Finsler metric defined by
$$k_{\Omega}(x ; v)=\inf \left\{|\xi| : f \in \operatorname{Hol}(\mathbb{D}, \Omega), \:\text { with } f(0)=x,
d(f)_{0}(\xi)=v\right\}.$$ Define the Kobayashi length of any curve $\sigma:[a,b]\rightarrow \Omega$
to be
$$l_k(\sigma)=\int_{a}^{b} k_{\Omega}\left(\sigma(t) ; \sigma^{\prime}(t)\right) d
t.$$
It is a consequence of a result due to Venturini \cite{VenturiniPseudodistances}, which is based on an observation by Royden \cite{royden1971remarks}, that
the Kobayashi pseudo-distance can be given by:
\begin{align*}
K_{\Omega}(x, y)&=\inf_\sigma \big\{l_k(\sigma)| \:\sigma :[a, b]
\rightarrow \Omega \text { is any absolutely continuous curve }\\
& \text { with } \sigma(a)=x \text { and } \sigma(b)=y \big\}.
\end{align*}
There are some estimates concerning the Kobayashi metric on convex domains.
\begin{lemma}[\cite{Graham1975Boundary}]\label{est2}
If $\Omega \subset \mathbb{C}^{n}$ is a bounded convex domain, then for all $x \in \Omega$ and $v \in \mathbb C^n$,
\begin{align}\label{est5}
\frac{|v|}{2\delta_{\Omega}(x ; v)} \leq k_{\Omega}(x ; v) \leq \frac{|v|}{\delta_{\Omega}(x ; v)}.
\end{align}
\end{lemma}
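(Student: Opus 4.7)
My plan is to prove the two inequalities in \eqref{est5} separately, using convexity of $\Omega$ in two distinct ways: to produce a large analytic disc through $x$ (for the upper bound) and to produce a holomorphic map to the left half-plane (for the lower bound).

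For the upper bound $k_{\Omega}(x;v)\leq |v|/\delta_{\Omega}(x;v)$, set $r:=\delta_{\Omega}(x;v)$ and consider the complex affine line $L:=x+\mathbb{C}v$. The slice $\Omega\cap L$ is a convex open subset of $L\cong\mathbb{C}$ containing $x$; its nearest boundary point to $x$ lies at Euclidean distance $r$, so the Euclidean open disc in $L$ of radius $r$ centered at $x$ is contained in $\Omega$. Thus the map $f:\mathbb{D}\to\Omega$ given by $f(\zeta):=x+(r/|v|)\zeta\, v$ is well-defined and holomorphic, $f(0)=x$, and $df_0(\xi)=v$ for the choice $\xi:=|v|/r$. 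The definition of the infinitesimal Kobayashi metric then yields $k_{\Omega}(x;v)\leq |\xi|=|v|/r$.

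For the lower bound $k_{\Omega}(x;v)\geq |v|/(2\delta_{\Omega}(x;v))$, choose a nearest boundary point $\xi_0\in\partial\Omega\cap L$, so $\xi_0=x+\tau_0 v$ with $|\tau_0|\,|v|=r$. Applying the real Hahn--Banach theorem to $\Omega$ at $\xi_0\in\partial\Omega$ produces a real supporting hyperplane; complexifying its defining $\mathbb{R}$-linear functional $\rho$ via $\lambda(z):=\rho(z)-i\rho(iz)$ furnishes a $\mathbb{C}$-linear functional $\lambda:\mathbb{C}^n\to\mathbb{C}$ with $\mathrm{Re}(\lambda(z)-\lambda(\xi_0))<0$ on $\Omega$. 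Setting $F(z):=\lambda(z)-\lambda(\xi_0)$ therefore defines a holomorphic map from $\Omega$ into the left half-plane $\mathbb{H}$, whose Kobayashi metric is $k_{\mathbb{H}}(w;\eta)=|\eta|/(2|\mathrm{Re}(w)|)$. By the decreasing property of the Kobayashi metric under holomorphic maps,
$$k_{\Omega}(x;v)\;\geq\;k_{\mathbb{H}}\bigl(F(x);\,dF_x(v)\bigr)\;=\;\frac{|\lambda(v)|}{2\,\mathrm{Re}(\lambda(\xi_0-x))}.$$

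The observation that closes the argument is that $\xi_0-x$ is a complex scalar multiple of $v$: writing $\xi_0-x=\tau_0 v$ yields $\lambda(\xi_0-x)=\tau_0\lambda(v)$, whence $\mathrm{Re}(\lambda(\xi_0-x))\leq |\tau_0|\,|\lambda(v)|=(r/|v|)\,|\lambda(v)|$. Substituting and cancelling $|\lambda(v)|$ (which is nonzero, since $F(x)\in\mathbb{H}$ forces $\lambda(\xi_0-x)\neq 0$, while $\tau_0\neq 0$) gives $k_{\Omega}(x;v)\geq |v|/(2r)$, as required. The only genuinely non-trivial step is the complexification of the real support functional, but this is the standard trick $\lambda(z)=\rho(z)-i\rho(iz)$, so no essential obstacle arises; everything else follows once the geometry along the complex line $L$ has been recorded.
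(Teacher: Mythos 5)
Your proof is correct, and both halves are the standard argument: the upper bound by embedding the largest round disc in the complex-line slice $\Omega\cap(x+\mathbb{C}v)$, and the lower bound by complexifying a real supporting functional at the nearest slice-boundary point $\xi_0$ to get a holomorphic map into a half-plane and then applying monotonicity of the Kobayashi metric. The paper itself only cites Graham and gives no proof, so there is nothing to compare against beyond noting that your argument is the classical one and the estimate $\operatorname{Re}(\tau_0\lambda(v))\leq|\tau_0|\,|\lambda(v)|$ is exactly where the factor $1/2$ (rather than equality) enters.
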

\begin{lemma}[Lemma 4.2,\cite{ZimmerCharacterizing}]\label{est25}
Suppose $\Omega \subset \mathrm{C}^{n}$ is a convex domain and $H \subset \mathbb{C}^{n}$ is a complex hyperplane
with $H \cap \Omega=\emptyset$. Then, for any $x, y \in \Omega$ we have
\begin{align}\label{est4}
K_{\Omega}\left(x, \: y\right) \geq \frac{1}{2}\left|\log \frac{d\left(x,H\right)}{d\left(y,H\right)}\right|.
\end{align}
\end{lemma}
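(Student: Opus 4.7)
The plan is to reduce the estimate to the Poincar\'e distance on a half-plane via the distance-decreasing property of the Kobayashi pseudodistance under holomorphic maps. Choose a complex affine function $L:\mathbb{C}^n\to\mathbb{C}$ defining $H$ (so $H=L^{-1}(0)$), normalized so that $|L(z)|=d(z,H)$ for every $z\in\mathbb{C}^n$. Since $\Omega$ is convex and $L$ is complex affine, the image $L(\Omega)\subset\mathbb{C}$ is an open convex set avoiding the origin.

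The next step is to push $L(\Omega)$ into a half-plane through $0$. Applying the separating hyperplane theorem to the disjoint convex sets $\{0\}$ and $L(\Omega)$ yields a real linear functional of the form $\ell(w)=\operatorname{Re}(\alpha w)$, with $\alpha\in\mathbb{C}\setminus\{0\}$, such that $\ell>0$ on $L(\Omega)$. After replacing $L$ by $(\alpha/|\alpha|)L$, which does not change $|L|$ and hence preserves $d(\cdot,H)$, we may assume
\[
L(\Omega)\ \subset\ \mathbb{H}:=\{w\in\mathbb{C}:\operatorname{Re} w>0\}.
\]
Since $L:\Omega\to\mathbb{H}$ is holomorphic, the contraction property of the Kobayashi metric gives $K_\Omega(x,y)\geq K_{\mathbb{H}}(L(x),L(y))$.

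It remains to bound $K_{\mathbb{H}}$ from below by $\tfrac{1}{2}$ times the logarithm of the ratio of moduli. The Kobayashi metric on $\mathbb{H}$ coincides with the Poincar\'e metric $k_{\mathbb{H}}(w;\xi)=|\xi|/(2\operatorname{Re} w)$. For any smooth curve $\gamma:[0,1]\to\mathbb{H}$ joining $L(x)$ to $L(y)$, the elementary inequalities $|\gamma'(t)|\geq \bigl|\tfrac{d}{dt}|\gamma(t)|\bigr|$ (since $\bigl|\tfrac{d}{dt}|\gamma|\bigr|=|\operatorname{Re}(\bar\gamma\gamma')|/|\gamma|\leq|\gamma'|$) and $\operatorname{Re}\gamma(t)\leq|\gamma(t)|$ combine to give
\[
l_k(\gamma)\ =\ \int_0^1\frac{|\gamma'(t)|}{2\operatorname{Re}\gamma(t)}\,dt\ \geq\ \frac{1}{2}\int_0^1\frac{\bigl|\tfrac{d}{dt}|\gamma(t)|\bigr|}{|\gamma(t)|}\,dt\ \geq\ \frac{1}{2}\left|\log\frac{|L(x)|}{|L(y)|}\right|.
\]
Taking the infimum over all such curves and combining with the contraction inequality completes the argument. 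The only genuinely nonroutine step is the half-plane reduction in the second paragraph: convexity of $\Omega$ together with $H\cap\Omega=\emptyset$ is precisely what allows us to trade the trivial Kobayashi metric on $\mathbb{C}\setminus\{0\}$ for the hyperbolic metric on $\mathbb{H}$; the rest is Schwarz--Pick together with a one-variable computation on the right half-plane.
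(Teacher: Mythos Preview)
The paper does not supply its own proof of this lemma; it is quoted verbatim as Lemma~4.2 of \cite{ZimmerCharacterizing} and used as a black box. Your argument is correct and is in fact the standard proof: project along a normalized affine functional to a convex set in $\mathbb{C}\setminus\{0\}$, rotate into the right half-plane via the separating hyperplane theorem, and then use Schwarz--Pick together with the explicit Poincar\'e metric $|dw|/(2\operatorname{Re} w)$ on $\mathbb{H}$ to extract the logarithmic lower bound. There is nothing to compare against in the present paper, and your write-up would serve perfectly well as a self-contained justification of the cited result.
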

%
Recall that a $C^1$-smooth boundary point $p$ of a domain $\Omega$ in $\mathbb{C}^n$ is said to be {\it Dini-smooth} (or {\it Lyapunov-Dini-smooth}), if the inner unit normal vector $\textbf{n}$ to $\partial\Omega$ near $p$ is a Dini-continuous function. This means that there exists a neighborhood $U$ of $p$ such that
$$\int^{1}_{0} \frac{\omega(t)}{t}dt < +\infty,$$
where
$$\omega(t) = \omega(\textbf{n},\partial\Omega \cap U, t) := \sup\left\{|\textbf{n}_x-\textbf{n}_y|:|x-y|<t, \:\: x, \: y \in\partial\Omega\cap U\right\}$$
is the respective modulus of continuity. Note that {\it Dini-smooth} is a weaker condition than $C^{1,\epsilon}$-smooth.

Here a Dini-smooth domain means that each boundary point of $\Omega$ is a Dini-smooth point. Then we have
\begin{lemma}[Corollary 8,\cite{Nikolov2015Estimates}]\label{nik}
Let $\Omega$ be a Dini-smooth bounded domain in $\mathbb{C}^n$ and $x,\: y\in \Omega$. Then there exists a constant $A>1+\sqrt{2}/2$ such that
\begin{align}\label{est}
K_{\Omega}(x,\: y)\leq \log\left(1+\frac{A|x-y|}{\sqrt{\delta_{\Omega}(x)\delta_{\Omega}(y)}}\right).
\end{align}
\end{lemma}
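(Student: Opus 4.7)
The plan is to bound $K_\Omega(x,y)$ from above by applying the explicit estimate for the Kobayashi distance on a Euclidean ball to a suitable inscribed domain $D\subset\Omega$ and invoking the monotonicity $K_\Omega(x,y)\le K_D(x,y)$. The role of Dini-smoothness is to supply inscribed balls of uniform size tangent to $\partial\Omega$, which locally approximate $\Omega$ well enough for this transfer.

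First I would establish a uniform interior ball condition from Dini-smoothness: there exists $r_0>0$ such that for every $q\in\partial\Omega$ the ball $B_q:=B(q+r_0\mathbf{n}_q,\,r_0)$ lies in $\Omega$, where $\mathbf{n}_q$ is the inward unit normal at $q$. The key input is that the Dini modulus $\omega$ of the normal satisfies $\int_0^{r_0}\omega(t)/t\,dt<\infty$, and integrating this modulus forces $\partial\Omega$ to remain outside the tangent sphere of radius $r_0$ for $r_0$ small. Second, on any Euclidean ball one has, after affine normalization to $\mathbb{B}^n$ and use of the Möbius-invariant formula for $K_{\mathbb{B}^n}$,
\begin{equation*}
K_{B(a,R)}(z,w)\le \log\!\left(1+\frac{C_R\,|z-w|}{\sqrt{\delta_{B(a,R)}(z)\,\delta_{B(a,R)}(w)}}\right)
\end{equation*}
for all $z,w\in B(a,R)$, with $C_R$ depending only on $R$.

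For general $x,y\in\Omega$, I would distinguish three regimes. If $|x-y|\le\tfrac12\min\{\delta_\Omega(x),\delta_\Omega(y)\}$, then the Euclidean ball $D=B(x,\delta_\Omega(x))\subset\Omega$ contains $y$ with $\delta_D(y)\ge\delta_\Omega(x)/2$, and the ball estimate gives the bound at once. If $x$ and $y$ both lie near $\partial\Omega$ and are close relative to $r_0$, they sit inside a common inner ball $B_q$ from Step 1 with $\delta_{B_q}(\,\cdot\,)$ comparable to $\delta_\Omega(\,\cdot\,)$, so the ball estimate again applies. In the remaining case I introduce intermediate points $x',y'$ at depth $\sim r_0$ along the inward normals at the nearest boundary points of $x$ and $y$, estimate $K_\Omega(x,x')$ and $K_\Omega(y,y')$ by the radial Poincaré formula inside the corresponding inner balls (each contributing a term of the form $\tfrac12\log(r_0/\delta_\Omega(\,\cdot\,))$), and bound the bulk piece $K_\Omega(x',y')$ by a constant depending only on $r_0$ and $\mathrm{diam}(\Omega)$. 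Combining the three contributions via $\tfrac12\log a+\tfrac12\log b=\log\sqrt{ab}$ produces $\sqrt{\delta_\Omega(x)\delta_\Omega(y)}$ in the denominator, and the constant $A$ is chosen large enough to absorb the bulk term and exceed $1+\sqrt{2}/2$.

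The main obstacle is Step 1: Dini-smoothness is strictly weaker than $C^{1,\alpha}$, so the uniform inner ball radius cannot be extracted from a Hölder bound on the normal but must come from a direct integration of the Dini modulus $\omega$, which is exactly where the convergence of $\int_0^{r_0}\omega(t)/t\,dt$ is essential. A secondary subtlety is the bookkeeping across regimes, in particular verifying that every pair $(x,y)$ is covered and that the constants from the ball estimates, chain construction and bulk bound combine into a single $A>1+\sqrt{2}/2$.
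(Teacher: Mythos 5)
The paper does not actually prove this lemma; it is imported as Corollary 8 of Nikolov \cite{Nikolov2015Estimates} and used as a black box, so there is no internal proof to compare against. Your proposal therefore has to be judged on its own merits.

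The fatal flaw is Step 1. A bounded domain whose inward unit normal is merely Dini-continuous does \emph{not} satisfy a uniform interior ball condition; indeed even $C^{1,\alpha}$ smoothness with $\alpha<1$ is not enough. Consider the planar model $\Omega=\{(s,t):t>|s|^{1+\alpha}\}$ near the origin. The normal has modulus of continuity $\omega(t)\asymp t^{\alpha}$, so $\int_0^1\omega(t)/t\,dt<\infty$ and the boundary is Dini-smooth (even $C^{1,\alpha}$), yet for any $r_0>0$ the tangent ball $B\bigl((0,r_0),r_0\bigr)$ protrudes out of $\Omega$: its lower boundary sits at height $r_0-\sqrt{r_0^2-s^2}\asymp s^2/(2r_0)$, which is eventually smaller than $|s|^{1+\alpha}$ for small $s$ when $\alpha<1$. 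Thus no interior tangent ball of any radius exists at the origin. The mechanism you invoke --- that integrating the Dini modulus forces $\partial\Omega$ to stay outside a tangent sphere of fixed radius --- cannot work: Dini-continuity of the normal yields only first-order flatness, $|h(x')|=o(|x'|)$ for the local boundary graph $h$, whereas the interior ball condition demands second-order flatness, $|h(x')|\lesssim|x'|^2$, and these are genuinely different orders of smallness. The paper itself stresses that Dini-smooth is strictly \emph{weaker} than $C^{1,\epsilon}$, so one cannot even retreat to a H\"older gain.

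Once Step 1 fails, the rest of the outline collapses: both the ``common inscribed ball'' regime and the chain through the intermediate points $x',y'$ require tangent balls of a uniform radius and the comparison $\delta_{B_q}\asymp\delta_\Omega$, neither of which is available. Your first regime ($|x-y|\le\tfrac12\min\{\delta_\Omega(x),\delta_\Omega(y)\}$, using the centred ball $B(x,\delta_\Omega(x))\subset\Omega$) is fine, and your ball estimate in Step 2 is correct and standard, but these alone do not reach the conclusion for pairs near the boundary. A proof at this regularity has to avoid inscribed spheres entirely; the known arguments instead integrate the universal bound $k_\Omega(z;v)\le|v|/\delta_\Omega(z)$ along an explicit chain of curves that tracks the boundary distance, with the convergence of $\int_0^1\omega(t)/t\,dt$ used to control the total Kobayashi length of the transverse pieces of the chain, not to manufacture a tangent sphere.
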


\begin{proposition}[\cite{BarthConvex}]
If $\Omega$ is a $\mathbb{C}$-proper convex domain in $\mathbb{C}^n (n\geq 2)$, then the Kobayashi metric $K_{\Omega}$ is complete.
\end{proposition}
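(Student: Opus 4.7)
The plan is the standard reduction: since $K_\Omega$ induces the Euclidean topology on convex domains, it suffices to show that, for each fixed $x_0\in\Omega$, $K_\Omega(x_0,x_n)\to\infty$ whenever $\{x_n\}\subset\Omega$ leaves every Euclidean-compact subset of $\Omega$. This forces every $K_\Omega$-Cauchy (hence $K_\Omega$-bounded) sequence to remain in a Euclidean-compact set and therefore converge. The driving tool is Lemma~\ref{est25}: for any complex affine hyperplane $H$ with $H\cap\Omega=\emptyset$,
\begin{equation}\label{eq:KcompHH}
K_\Omega(x_0,x_n)\ \geq\ \tfrac{1}{2}\Bigl|\log\tfrac{d(x_n,H)}{d(x_0,H)}\Bigr|.
\end{equation}
After extracting a subsequence, either $x_n\to p\in\partial\Omega$ (in $\mathbb{C}^n$) or $|x_n|\to\infty$, and I treat the two cases separately.

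For the boundary case, convexity supplies an $\mathbb{R}$-linear functional $\ell$ with $\Omega\subset\{\ell<\ell(p)\}$. Its $\mathbb{C}$-linear extension $L(z):=\ell(z)-i\ell(iz)$ satisfies $\operatorname{Re}L=\ell$, so the complex affine hyperplane $H_p:=\{L=L(p)\}$ lies in the real supporting hyperplane at $p$ and is disjoint from $\Omega$. Since $d(x_n,H_p)\to 0$ while $d(x_0,H_p)>0$, \eqref{eq:KcompHH} gives $K_\Omega(x_0,x_n)\to\infty$. Note that this part uses only convexity, not $\mathbb{C}$-properness.

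The escape-to-infinity case is where $\mathbb{C}$-properness enters, via the claim: \emph{there exist $n$ complex affine hyperplanes $H_1,\dots,H_n$ in general position, each disjoint from $\Omega$.} Granting the claim, the defining functionals $L_1,\dots,L_n$ assemble into an affine automorphism $\phi(z):=(L_1(z),\dots,L_n(z))$ of $\mathbb{C}^n$; hence $|x_n|\to\infty$ forces $\max_i d(x_n,H_i)\to\infty$, and \eqref{eq:KcompHH} again yields $K_\Omega(x_0,x_n)\to\infty$. The main obstacle is the claim itself. I would prove it by letting $V\subset(\mathbb{C}^n)^*$ be the $\mathbb{C}$-span of all complex supporting functionals $L_q$ produced (as in the previous paragraph) from boundary points $q\in\partial\Omega$, and showing $V=(\mathbb{C}^n)^*$: otherwise pick $0\neq v\in\mathbb{C}^n$ with $L(v)=0$ for every $L\in V$; for any $x\in\Omega$ and $\zeta_0\in\mathbb{C}$ with $x+\zeta_0 v\notin\Omega$, minimality yields $q=x+t\zeta_0 v\in\partial\Omega$ with $t\in(0,1]$, and $L_q(v)=0$ forces $L_q(x)=L_q(q)$, hence $\ell(x)=\ell(q)$, contradicting $x\in\Omega\subset\{\ell<\ell(q)\}$. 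Thus $x+\mathbb{C}v\subset\Omega$ for every $x\in\Omega$, contradicting $\mathbb{C}$-properness. Selecting $n$ $\mathbb{C}$-linearly independent elements of $V=(\mathbb{C}^n)^*$ then provides the required hyperplanes in general position, completing the proof.
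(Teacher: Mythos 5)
The paper cites this from \cite{BarthConvex} without giving a proof, so there is no in-paper argument to compare against. Your blind proof is correct and self-contained, resting on the hyperplane estimate Lemma~\ref{est25} that the paper already records. The reduction to showing $K_\Omega(x_0,x_n)\to\infty$ for sequences escaping every compact subset of $\Omega$ is standard; in the boundary case the complexified supporting functional at a limit point $p\in\partial\Omega$ produces a complex hyperplane $H_p$ through $p$ with $H_p\cap\Omega=\emptyset$, and $d(x_n,H_p)\to 0$ with $d(x_0,H_p)>0$ feeds directly into the estimate. The geometric core of the escape-to-infinity case --- that a $\mathbb{C}$-proper convex domain admits $n$ complex affine hyperplanes disjoint from $\Omega$ whose linear parts are $\mathbb{C}$-linearly independent --- is the right lemma, and your proof of it is correct: if $v\neq 0$ annihilates every supporting functional $L_q$, the first-exit argument along segments forces $x+\mathbb{C}v\subset\Omega$ for every $x\in\Omega$, contradicting $\mathbb{C}$-properness. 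You should make the final bookkeeping explicit: pass to a subsequence along which the index $i$ maximizing $d(x_n,H_i)$ is constant, apply the hyperplane estimate for that fixed $H_i$, and conclude for the whole sequence since the argument applies to every subsequence. Altogether this is a quantitative, self-contained alternative to the cited source that fits naturally beside the rest of the paper's Kobayashi-distance machinery.
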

Here $\mathbb{C}$-{\it proper} means that $\Omega$ does not contain any entire complex affine lines. Since all bounded domains are $\mathbb{C}$-proper, thus for a bounded convex domain $\Omega$, the Kobayashi metric $K_\Omega$ on $\Omega$ is a complete length metric. Therefore, $(\Omega,K_{\Omega})$ is a geodesic space.

\subsection{Rough quasi-isometries}\

Suppose that $(X,\:\rho)$ is a metric space and $I\subset\mathbb{R}$ is an
interval. A map $\sigma: I\rightarrow X$ is called a {\it geodesic}
if for all $s,\: t\in I$,
$$\rho(\sigma(s),\sigma(t))=|t-s|.$$
For $\lambda \geq 1$ and $\kappa \geq 0$, a curve $\sigma : I
\rightarrow X$ is called a $(\lambda, \kappa)$-{\it quasi-geodesic}, if for all $s, \:t \in I$,
$$\frac{1}{\lambda}|t-s|-\kappa \leq \rho(\sigma(s), \sigma(t)) \leq
 \lambda|t-s|+\kappa.$$
In particular if $\kappa=0$, it's called a $(\lambda,0)$-quasi-geodesic or $\lambda$-quasi-geodesic.

\bigskip
With the notation as above, for later use we recall the following definition. See e.g. \cite{Schramm1999Embeddings}.
\begin{defn}
Let $f: X\to Y$ be a map between metric spaces.
\begin{enumerate}
\item
If for all $x,\:y\in X$,
$$\frac{d_X(x, \:y)}{\lambda}-\kappa\leq d_Y(f(x),f(y))\leq \lambda d_X(x,\:y)+\kappa,$$
then $f$ is called a {\it $(\lambda, \kappa)$-roughly quasi-isometric map}.
If $\lambda=1$, then $f$ is called a {\it $\kappa$-roughly isometric}.
\noindent
\item Moreover, if $f$ is a homeomorphism and $\kappa=0$, then it is called a {\it $\lambda$-bilipschitz} or simply a {\it bilipschitz}.
\end{enumerate}
\end{defn}
\subsection{Uniformly squeezing properties}\label{usq}\

Following Liu, Sun and Yau \cite{Liu2004Canonical,Kefeng}, a domain $\Omega\subset \mathbb{C}^n$ with $n\geq 2$ is said to be {\it holomorphic homogeneous regular} (HHR) or {\it uniformly squeezing} (USq), if there exists $s>0$ with the following property: for every $z\in\Omega$ there exists a holomorphic embedding $\phi : \Omega \rightarrow \mathbb{C}^n$ with $\phi(z) = 0$ and
$$B_{s}(0)\subset \phi(\Omega)\subset B_1(0),$$
where $B_{1}(0)\subset \mathbb{C}^n$ is the unit ball.

We mention some examples of HHR/USq domains:

(1) $T_{g,n}$, the Teichm\"{u}ller space of hyperbolic surfaces with genus $g>1$ and $n$ punctures;

(2) bounded convex domains \cite{KimON};

(3) strongly pseudoconvex domains \cite{DiederichExposing,squeezingfunc}.

It was shown in \cite{Liu2004Canonical,Yeung2009Geometry,Kefeng} that on an HHR/USq domain $\Omega$, the Carath\'{e}odory metric, the Kobayashi metric, the Bergman metric and the K\"{a}hler-Einstein metric are bilipschitzly equivalent to each other.

\subsection{Gromov products and Gromov hyperbolicities}\label{GGG}\

\begin{defn}
Let $(X, \:\rho)$ be a metric space. Given three points $x, \:y, \:o \in$ $X,$ the {\it Gromov product} of $x,y$ with respect to $o$ is given by
$$(x | y)_{o}=\frac{1}{2}\Big(\rho(x, o)+\rho(o, y)-\rho(x, y)\Big).$$
A proper geodesic metric space $(X, \:\rho)$ is called {\it Gromov hyperbolic} (or $\delta$-{hyperbolic}), if there exists $\delta \geq 0$ such that, for all $o, \:x, \:y, \:z
\in X$,
$$(x | y)_{o} \geq \min \left\{(x | z)_{o},\: (z | y)_{o}\right\}-\delta.$$

\end{defn}
By the triangle inequality, we know that
$$(x | y)_{o}\leq \rho(o,\: [x,y]),$$
where $[x,y]$ is a geodesic connecting $x$ and $y$ in $(X,\: \rho)$. Moreover, if $X$ is Gromov hyperbolic, then we have the following standard estimate
\begin{align}\label{w1}
|(x|y)_{o}-\rho(o,\: [x,y])|\leq \delta'
\end{align}
for some $\delta'>0$.

Note that the large scaled behavior of quasi-geodesics in a Gromov hyperbolic space mimics that of
geodesics rather closely.
\begin{thm}$($Stability of quasi-geodesics, Page 401,\cite{BMHA}$)$\label{sta}
For all $\delta> 0, \: \lambda \geq1, \: \epsilon>0$, there
exists a constant $R=R(\delta,\lambda,\epsilon)$ with the following property:

If $X$ is a $\delta$-hyperbolic geodesic space, $\gamma$ is a $(\lambda,\: \epsilon)$-quasi-geodesic in $X$ and $[x, y]$
is a geodesic segment joining the endpoints of $\gamma$, then the Hausdorff distance between
$[x, y]$ and the image of $\gamma$ is no more than $R$.
\end{thm}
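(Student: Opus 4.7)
The plan is to follow the classical Morse-type argument for stability of quasi-geodesics in $\delta$-hyperbolic spaces. The heart of the proof is the contrast between the linear length bound enjoyed by quasi-geodesics and the logarithmic (equivalently, exponential) divergence of continuous curves from geodesics in hyperbolic spaces.

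First I would reduce to the continuous case. Given $\gamma:[a,b]\to X$ a $(\lambda,\epsilon)$-quasi-geodesic, sample at integer parameters $t_i=a+i$ (and $t_n=b$) and connect $\gamma(t_i)$ to $\gamma(t_{i+1})$ by geodesic segments. A direct calculation shows that the resulting continuous curve $\tilde\gamma$ is a $(\lambda,\epsilon')$-quasi-geodesic, for some $\epsilon'=\epsilon'(\lambda,\epsilon)$, whose image is within Hausdorff distance $\lambda/2+\epsilon$ of the image of $\gamma$; it therefore suffices to prove the conclusion for $\tilde\gamma$.

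Second, I would establish the key exponential divergence estimate in $\delta$-hyperbolic spaces: for every continuous rectifiable curve $c$ from $u$ to $v$ in $X$ and every $p\in[u,v]$,
\begin{align*}
 d\bigl(p,\:\mathrm{Image}(c)\bigr)\leq \delta\bigl(1+\log_2 \ell(c)\bigr),
\end{align*}
where $\ell(c)$ is the length of $c$. This is proved by iterated bisection using the $\delta$-thin triangle property: the midpoint $m$ of $c$ is within $\delta$ of the geodesic joining the endpoints of $c$; if $p$ is not already within $\delta$ of $m$, one recurses on the half of $c$ containing the nearest point of $[u,v]$ to $p$, losing one factor of $2$ in length and at most $\delta$ in the divergence at each step.

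Third, I would bound the Hausdorff distance between $[x,y]$ and $\tilde\gamma$. Let $\tilde\gamma(t_0)=x_0$ attain the maximum $D:=\sup_t d(\tilde\gamma(t),[x,y])$, and choose $t_1<t_0<t_2$ to be the last time before $t_0$ and the first after $t_0$ at which $\tilde\gamma$ enters the $(D/2)$-neighborhood of $[x,y]$; these times exist by continuity since the endpoints of $\tilde\gamma$ lie on $[x,y]$. The subarc $\tilde\gamma|_{[t_1,t_2]}$ has length at most $\lambda(t_2-t_1)+\epsilon'=O(D)$ by the quasi-geodesic inequality, yet it remains outside the ball $B(x_0,D/2)$. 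Applying the estimate of the previous paragraph to $c=\tilde\gamma|_{[t_1,t_2]}$, with a point of the geodesic $[\tilde\gamma(t_1),\tilde\gamma(t_2)]$ close to $x_0$ (obtainable via the fellow-traveler property of geodesics in hyperbolic spaces, since $\tilde\gamma(t_1)$ and $\tilde\gamma(t_2)$ are within $D/2$ of $[x,y]$), yields an inequality of the form $D/2-O(\delta)\leq \delta\bigl(1+\log_2 O(D)\bigr)$, which forces $D\leq R_0(\delta,\lambda,\epsilon)$. Transferring back to $\gamma$ via the reduction in the first paragraph produces the desired constant $R=R_0+\lambda/2+\epsilon$.

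The main obstacle is the bookkeeping in the third paragraph: one must confirm that the auxiliary geodesic $[\tilde\gamma(t_1),\tilde\gamma(t_2)]$ passes close enough to $x_0$ so that the $D/2$ term on the left side of the final inequality is not absorbed by approximation errors, and then solve the resulting (essentially) transcendental inequality to extract an explicit $R$. Everything else is formal once the exponential divergence estimate of the second paragraph is in place.
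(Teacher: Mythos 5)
The paper offers no proof of this statement; it is quoted verbatim (as ``Stability of quasi-geodesics'') from Bridson--Haefliger \cite{BMHA}, and your argument is essentially the one given there: discretize and reconnect to get a continuous quasi-geodesic with a length bound, invoke the exponential-divergence lemma for rectifiable curves in $\delta$-hyperbolic spaces, and play the linear length growth of the quasi-geodesic off against the logarithmic bound to trap the maximal deviation $D$.

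There is, however, a genuine gap in your third step: you only bound the one-sided quantity $\sup_t d(\tilde\gamma(t),[x,y])$, i.e.\ you show $\tilde\gamma$ lies in an $R_0$-neighborhood of $[x,y]$. The Hausdorff distance also requires the reverse containment, that every point of $[x,y]$ lies within a bounded distance of $\tilde\gamma$, and that does not follow from what you wrote. The standard fix is a short connectedness argument: fix $p\in[x,y]$ and consider the two closed sets of parameters $t$ for which $\tilde\gamma(t)$ lies in the $R_0$-neighborhood of $[x,p]$, respectively of $[p,y]$; both are nonempty (endpoints), and they cover the domain, so they intersect at some $t^{*}$. Then $\tilde\gamma(t^{*})$ is within $R_0$ of points $u\in[x,p]$ and $v\in[p,y]$, whence $d(u,v)\le 2R_0$; since $p$ lies between $u$ and $v$ on the geodesic, $d(p,\tilde\gamma(t^{*}))\le 3R_0$. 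This must be added before you can conclude a Hausdorff-distance bound. A minor quantitative slip: the Hausdorff distance between $\gamma$ and its piecewise-geodesic interpolant $\tilde\gamma$ is bounded by $\lambda+\epsilon$, not $\lambda/2+\epsilon$ (you must account for the distance from an arbitrary $\gamma(t)$, $t\in[t_i,t_{i+1}]$, to the nearest sample point, which is itself up to $\lambda+\epsilon$); this does not affect the structure of the proof.
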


We now recall the following definition. Refer to \cite{Schramm1999Embeddings, BMHA}.
\begin{defn}
Suppose that $X$ is Gromov hyperbolic.
\begin{enumerate}
\item
A sequence $\{x_i\}$ in $X$ is called a {\it Gromov sequence} if $(x_i|x_j)_o\rightarrow \infty$ as $i,$ $j\rightarrow \infty.$
\item
Two such sequences $\{x_i\}$ and $\{y_j\}$ are said to be {\it equivalent} if $(x_i|y_i)_o\rightarrow \infty$ as $i\to\infty$.
\item
The {\it Gromov boundary} $\partial_G X$ of $X$ is defined to be the set of all equivalence classes of Gromov sequences, and $\overline{X}^G=X \cup \partial_G X$ is called the {\it Gromov closure} of $X$.
\item
For $a\in X$ and $b\in \partial_G X$, the Gromov product $(a|b)_o$ is defined by
$$(a|b)_o= \inf \big\{ \liminf_{i\rightarrow \infty}(a|b_i)_o:\; \{b_i\}\in b\big\}.$$
\item
For $a,\: b\in \partial_G X$, the Gromov product $(a|b)_o$ is defined by
$$(a|b)_o= \inf \big\{ \liminf_{i\rightarrow \infty}(a_i|b_i)_o:\; \{a_i\}\in a\;\;{\rm and}\;\; \{b_i\}\in b\big\}.$$
\end{enumerate}
\end{defn}

The following result states that Gromov hyperbolicity is preserved under rough quasi-isometries.
\begin{thm}$($Page 402,\cite{BMHA}$)$\label{gro2}
Let $X,\:X'$ be geodesic metric spaces and $f:X\rightarrow X'$ be a rough quasi-isometry. If $X$ is Gromov hyperbolic, then $X'$ is also Gromov hyperbolic.
\end{thm}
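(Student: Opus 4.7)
The plan is to verify Gromov hyperbolicity of $X'$ via the standard equivalent characterization that geodesic triangles are uniformly slim: given any geodesic triangle in $X'$, I will pull it back to a quasi-geodesic triangle in $X$ using a coarse inverse of $f$, invoke the $\delta$-slimness of geodesic triangles in $X$ together with the stability of quasi-geodesics (Theorem \ref{sta}), and then transfer the slimness estimate back to $X'$.

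Since $f: X \to X'$ is a $(\lambda, \kappa)$-rough quasi-isometry in the sense used in \cite{BMHA}, it admits a coarse inverse $g: X' \to X$ which is again a $(\lambda', \kappa')$-rough quasi-isometry satisfying $d_{X'}(f\circ g, \mathrm{id}_{X'}) \leq C$ and $d_X(g \circ f, \mathrm{id}_X) \leq C$ for some $C>0$. Fix a geodesic triangle in $X'$ with vertices $a'_1, a'_2, a'_3$ and sides $\sigma'_{ij}$. The compositions $g \circ \sigma'_{ij}$ are a priori only discrete quasi-geodesics in $X$; interpolating by geodesic segments (which exist since $X$ is a geodesic space) between images of points of bounded spacing yields continuous $(\lambda'', \kappa'')$-quasi-geodesics $\tau_{ij}$ in $X$ joining $g(a'_i)$ and $g(a'_j)$, with constants depending only on $\lambda', \kappa'$.

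Because $X$ is $\delta$-hyperbolic, Theorem \ref{sta} provides a constant $R=R(\delta, \lambda'', \kappa'')$ such that each $\tau_{ij}$ is within Hausdorff distance $R$ of a genuine geodesic $[g(a'_i), g(a'_j)]$ in $X$; combining this with the $\delta$-slimness of the geodesic triangle in $X$, the quasi-triangle $\tau_{12}\cup\tau_{23}\cup\tau_{13}$ is $(\delta + 2R)$-slim. For any $z' \in \sigma'_{12}$, its image $g(z')$ lies on (or within $O(1)$ of) $\tau_{12}$, hence is within $\delta + 2R + O(1)$ in $X$ of some $g(w')$ with $w' \in \sigma'_{23}\cup\sigma'_{13}$. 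The lower rough quasi-isometry bound for $g$ then produces
$$d_{X'}(z', w') \leq \lambda'\bigl(d_X(g(z'), g(w')) + \kappa'\bigr) \leq \lambda'\bigl(\delta + 2R + \kappa' + O(1)\bigr),$$
a universal constant independent of the triangle chosen. Hence geodesic triangles in $X'$ are uniformly slim and $X'$ is Gromov hyperbolic.

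The main technical nuisance is in the pulling-back step: since $g$ need not be continuous, one must verify that a standard geodesic interpolation of $g \circ \sigma'_{ij}$ yields a genuine continuous quasi-geodesic with controlled constants so that Theorem \ref{sta} is applicable; everything else amounts to careful bookkeeping of additive and multiplicative constants under the rough quasi-isometry bounds.
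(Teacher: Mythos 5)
The paper itself offers no proof of this statement; it is cited directly to Bridson--Haefliger (p.\ 402). Your argument is the standard textbook proof, essentially the one given there: take a coarse inverse $g$, pull the geodesic triangle back to a quasi-geodesic triangle in $X$ (after interpolating by geodesic segments to obtain a genuine continuous curve), apply quasi-geodesic stability and the $\delta$-slimness of triangles in $X$, and push the slimness estimate forward via the lower rough quasi-isometry bound for $g$. The technical nuisance you flag --- that $g\circ\sigma'_{ij}$ is a priori only a discrete quasi-geodesic and must be ``tamed'' into a continuous one with controlled constants --- is real but standard (Bridson--Haefliger handle it via their Lemma on taming quasi-geodesics), and you handle the final transfer correctly: a nearby point on the taming of $\tau_{23}\cup\tau_{13}$ is within $O(1)$ of some interpolation point $g(w')$, so the lower bound $d_{X'}(z',w')\le\lambda'(d_X(g(z'),g(w'))+\kappa')$ closes the argument. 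The proof is correct and takes the expected route.
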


Finally, for our later use we introduce the following result.

\begin{proposition}\label{z0}$($Lemma $5.11$, \cite{Vai-0}$)$
Let $o, \: z\in X$ and let $X$ be a $\delta$-hyperbolic space, and $\xi,\: \xi'\in\partial_G X$. Then for any sequences $\{y_i\}\in \xi$, $\{y_i'\}\in \xi'$, we have
$$(\xi|\xi')_o\leq \liminf_{i\rightarrow \infty} (y_i|y_i')_o \leq \limsup_{i\rightarrow \infty} (y_i|y_i')_o\leq (\xi|\xi')_o+2\delta.$$
\end{proposition}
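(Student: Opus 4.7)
\emph{Proof plan.} I would address the three inequalities in order. The leftmost one, $(\xi|\xi')_o \leq \liminf_{i\to\infty}(y_i|y_i')_o$, is immediate from the definition of $(\xi|\xi')_o$ as the infimum of $\liminf_i (a_i|b_i)_o$ over admissible pairs of representatives, since $\{y_i\}\in\xi$ and $\{y_i'\}\in\xi'$ form one such pair. The middle inequality $\liminf\leq\limsup$ is tautological. Thus all of the substance lies in proving
\begin{equation*}
\limsup_{i\to\infty}(y_i|y_i')_o \leq (\xi|\xi')_o + 2\delta.
\end{equation*}

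The central step will be a comparison estimate: for any other representatives $\{a_i\}\in\xi$ and $\{b_i\}\in\xi'$,
\begin{equation*}
|(y_i|y_i')_o - (a_i|b_i)_o| \leq 2\delta \quad\text{for all sufficiently large } i.
\end{equation*}
I would derive this by two applications of the defining $\delta$-hyperbolicity inequality $(x|y)_o \geq \min\{(x|z)_o,(z|y)_o\}-\delta$. Since $\{y_i\}\sim\{a_i\}$, we have $(y_i|a_i)_o\to\infty$; applying the inequality to the triple $(y_i,a_i,b_i)$ in both orderings (first lower-bounding $(y_i|b_i)_o$, then lower-bounding $(a_i|b_i)_o$) and using that $(y_i|a_i)_o$ eventually dominates yields $|(y_i|b_i)_o-(a_i|b_i)_o|\leq\delta$. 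The required domination hinges on $(a_i|b_i)_o$ and $(y_i|b_i)_o$ remaining bounded: this is automatic in the non-trivial case $\xi\neq\xi'$, because any subsequence on which $(a_i|b_i)_o\to\infty$ would render subsequential representatives of $\xi$ and $\xi'$ equivalent and hence force $\xi=\xi'$; the case $\xi=\xi'$ is vacuous since then $(\xi|\xi')_o=\infty$. A second identical argument applied to the triple $(y_i,b_i,y_i')$, using $(y_i'|b_i)_o\to\infty$, gives $|(y_i|y_i')_o-(y_i|b_i)_o|\leq\delta$, and the triangle inequality yields the $2\delta$-bound.

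Taking $\limsup$ in $i$ of the pointwise estimate produces $\limsup_i(y_i|y_i')_o \leq \limsup_i(a_i|b_i)_o + 2\delta$ for every admissible pair. The final reduction is the identification
\begin{equation*}
\inf_{\{a_i\},\{b_i\}}\limsup_i(a_i|b_i)_o \;=\; \inf_{\{a_i\},\{b_i\}}\liminf_i(a_i|b_i)_o \;=\; (\xi|\xi')_o,
\end{equation*}
which I would prove by extracting, from any admissible pair, a subsequence along which $(a_i|b_i)_o$ converges to its $\liminf$; since subsequences of Gromov sequences remain in the same boundary equivalence class, this produces another admissible pair whose $\limsup$ equals the original $\liminf$. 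Taking infimum over admissible pairs in the pointwise $\limsup$ bound then completes the proof. The main obstacle I anticipate is not the hyperbolicity computation, which is a routine double application of the definition, but rather the careful bookkeeping between $\liminf$ and $\limsup$ and the verification that $(a_i|b_i)_o$ stays bounded under $\xi\neq\xi'$ so that the hyperbolicity inequalities can be read as one-sided bounds.
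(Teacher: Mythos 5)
The paper does not prove this proposition at all; it is cited verbatim from V\"ais\"al\"a's survey (Lemma 5.11 of \cite{Vai-0}), so there is no ``paper's own proof'' to compare against. Your self-contained argument is correct and is essentially the standard one: the left inequality is definitional, the heart of the matter is the pointwise $2\delta$-comparison between two representative pairs via a double application of the hyperbolicity inequality, and the passage from $\limsup$ to $\liminf$ is handled by the subsequence-extraction device (which legitimately stays within the equivalence class). The one place that deserves a sentence of care, and which you do flag, is the boundedness needed to read the hyperbolicity inequality one-sidedly: the correct justification is that if any of $(a_i|b_i)_o$, $(y_i|b_i)_o$ admitted a subsequence tending to $\infty$, then passing to that subsequence would exhibit equivalent representatives of $\xi$ and $\xi'$ and force $\xi=\xi'$, in which case $(\xi|\xi')_o=\infty$ and the inequality is vacuous; so in the nonvacuous case all the relevant products are uniformly bounded while $(y_i|a_i)_o,(y_i'|b_i)_o\to\infty$, which makes the minimum in the hyperbolicity inequality eventually be the bounded term.
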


\bigskip
\section{\noindent{{\bf $M$-convex domains }}}\label{aa}
In this section we will investigate the geometric properties of the Kobayashi geodesics in $m$-convex domains. We first give the proof of Lemma \ref{est14} by using the idea from \cite{Gehring1979Uniform,Bonk2001Uniformizing}, the differences between the Kobayashi metric and the quasihyperbolic metric necessitate some changes in the proof.


In order to prove Lemma \ref{est14}, we only need to verify the following result.

\begin{lemma}\label{uniform} Suppose that $\Omega$ is a bounded $m$-convex domain in $\mathbb{C}^n (n\geq 2)$ with Dini-smooth boundary, and suppose that $\gamma\subset \Omega$ is a $\lambda$-quasi-geodesic in the Kobayashi metric $K_\Omega$ connecting $y_1, \:y_2\in\Omega$ with $\lambda\geq 1$. Then for any $\alpha>3m^2-2m$, there exists a constant $\tilde{C}>0$ such that, for every $z=\gamma(t)\in\gamma$,
\begin{align*}
\delta_{\Omega}(z)\geq \tilde{C}\left(l_{d}(\gamma|[0,t])\wedge l_{d}(\gamma|[t,1])\right)^{\alpha}.
\end{align*}
\end{lemma}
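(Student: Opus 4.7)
The plan is to adapt the classical Gehring--Osgood argument for quasi-geodesics in uniform domains to the Kobayashi-metric setting on $m$-convex Dini-smooth domains. The three main ingredients are: (i) an $m$-convex version of Graham's lower estimate---combining Lemma \ref{est2} with $\delta_{\Omega}(w;v)\leq C\delta_{\Omega}(w)^{1/m}$ yields $k_{\Omega}(w;v)\geq |v|/(2C\delta_{\Omega}(w)^{1/m})$; (ii) the Dini-smooth upper bound for $K_{\Omega}$ (Lemma \ref{nik}); and (iii) the complex-hyperplane lower bound on $K_{\Omega}$ (Lemma \ref{est25}).

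Fix $z=\gamma(t_0)$, set $d_0:=\delta_{\Omega}(z)$, and, without loss of generality, suppose $L:=l_d(\gamma|[0,t_0])\leq l_d(\gamma|[t_0,1])$. The goal is to prove $d_0\geq \tilde C L^\alpha$. I would reparameterize $\gamma|[0,t_0]$ by Euclidean arc length as $\eta:[0,L]\to\Omega$ with $\eta(0)=z$, $\eta(L)=y_1$, so that $|\eta'(s)|=1$. Since $\delta_{\Omega}$ is $1$-Lipschitz, on $s\in[0,d_0/2]$ we have $\delta_{\Omega}(\eta(s))\in[d_0/2,3d_0/2]$, so this ``near-field'' piece contributes only a bounded amount to the Kobayashi length via Lemma \ref{nik}; the real work concerns $s\in[d_0/2,L]$.

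The central step is to produce a separating complex hyperplane near $z$. Choose $\xi\in\partial\Omega$ with $|z-\xi|=d_0$ and let $H$ be the complex tangent hyperplane of $\partial\Omega$ at $\xi$. Since $\Omega$ is convex, $H\cap\Omega=\emptyset$ and $d(z,H)=d_0$, while for every $w\in\Omega$ the real segment from $w$ to its closest point on $H$ must exit $\Omega$ first, giving $d(w,H)\geq \delta_{\Omega}(w)$. Applying Lemma \ref{est25} then yields, for every $s\in[0,L]$,
\begin{equation*}
K_{\Omega}(z,\eta(s))\;\geq\;\tfrac{1}{2}\log\frac{\delta_{\Omega}(\eta(s))}{d_0}.
\end{equation*}
By the $\lambda$-quasi-geodesic hypothesis (which bounds $K_{\Omega}(\gamma(s_1),\gamma(s_2))$ above linearly in the parameter difference, and can be assumed to be Kobayashi-arc-length parameterized up to a factor of $\lambda$), this forces $\delta_{\Omega}(\eta(s))$ to grow at most exponentially in the Kobayashi arc length from $z$.

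Finally I would plug this exponential envelope into the $m$-convex lower bound
\begin{equation*}
l_k(\eta)\;\geq\;\frac{1}{2C}\int_0^L\frac{ds}{\delta_{\Omega}(\eta(s))^{1/m}},
\end{equation*}
compare with the Dini-smooth upper bound $l_k(\eta)\leq \lambda K_{\Omega}(y_1,z)\lesssim |\log d_0|$ coming from Lemma \ref{nik}, and iterate this comparison on a dyadic decomposition of $[0,L]$ based on the Kobayashi distance from $z$. Careful tracking of exponents---a factor of $m$ from Graham's $m$-convex estimate, a factor of $m$ from the hyperplane-versus-$\delta_{\Omega}$ comparison, and the half-power factor from the $\sqrt{\cdot}$ in Lemma \ref{nik}---produces an inequality of the form $L\leq C|\log d_0|\cdot d_0^{1/(3m^2-2m)}$, and absorbing the logarithmic factor into the exponent yields $d_0\geq \tilde C L^\alpha$ for any $\alpha>3m^2-2m$.

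I expect the main obstacle to be the exponent bookkeeping in the final step: each of the three estimates, used individually, only produces a weaker bound, and only their combined iteration on the correct dyadic scales gives the sharp threshold $3m^2-2m$. Small losses at any stage propagate multiplicatively through the chain, so the dyadic decomposition and a careful induction on the $\delta_{\Omega}$-scale are essential to attain the stated exponent.
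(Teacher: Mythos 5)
Your overall strategy---dyadic decomposition of the quasi-geodesic by distance-to-boundary scale, combined with the $m$-convex lower bound for $k_\Omega$, the Nikolov Dini-smooth upper bound for $K_\Omega$, and the separating-hyperplane lower bound---is the same strategy the paper uses. Your choice of hyperplane is actually cleaner than the paper's: by taking the complex tangent hyperplane $H$ at the Euclidean-closest boundary point $\xi$ to $z$, you get the exact identity $d(z,H)=\delta_\Omega(z)$, whereas the paper picks $H$ through a different boundary point and only obtains $d(z,H)\leq C\delta_\Omega^{1/m}(z)$ from Definition \ref{def2}; your version is a genuinely sharper input to Lemma \ref{est25}. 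This difference is not an error, but it does mean the exponent that would come out of your chain is not $3m^2-2m$ but something else (carrying your ingredients through the paper's dyadic scheme would give an exponent on the order of $3m-2$ rather than $m(3m-2)$, which still suffices for the lemma as stated).

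The genuine gap is the exponent computation itself, which is the entire technical content of this lemma, and you do not carry it out. The informal accounting you give---``a factor of $m$ from Graham's estimate, a factor of $m$ from the hyperplane-versus-$\delta_\Omega$ comparison, and the half-power from the $\sqrt{\cdot}$ in Lemma \ref{nik}''---multiplies to $2m^2$, not $3m^2-2m$, so it does not reproduce the claimed threshold. Worse, it is inconsistent with your own setup: your hyperplane estimate has $d(z,H)=\delta_\Omega(z)$ with \emph{no} $m$-th root, so there is no ``factor of $m$'' there; and in a dyadic decomposition by $\delta_\Omega$-scale, both endpoints of each subcurve sit at comparable distance to the boundary, so the geometric mean $\sqrt{\delta_\Omega(x)\delta_\Omega(y)}$ in Lemma \ref{nik} contributes no half-power loss. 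You also need the auxiliary device $\log(1+t)\leq C(N)t^{1/N}$ with $N\to\infty$ (which the paper uses to trade the logarithm for an arbitrarily small power) before any absorption of $|\log d_0|$ is legitimate; the assertion ``absorbing the logarithmic factor into the exponent'' needs this to be made precise. Until the iteration on dyadic scales is actually written down and the exponents tracked through inequalities (\ref{est5}), (\ref{est}), and (\ref{est4}), the proof is an outline rather than an argument, and the stated bookkeeping does not close.
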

\begin{rmk}
This result tells us that, for any curve $\gamma'$ connecting $\gamma|[0,t]$ and $\gamma|[t,1]$, we always have
$$B(z,\delta_{\Omega}^{\frac{1}{\alpha}}(z)/\tilde{C})\cap\gamma'\neq\emptyset.$$
\end{rmk}
\begin{proof}
Put $D=\max\limits_{z\in\gamma} \delta_{\Omega}(z)$. For $i=1,2$, let $N_i$ denote the unique integer such that
$$\frac{D}{2^{N_i+1}}\leq \delta_{\Omega}(y_i)\leq \frac{D}{2^{N_i}}.$$
For $k=0,\ldots,N_1$, let $x_k^1$ be the first point on $\gamma$ with
$$\delta_{\Omega}(x_k^1)=\frac{D}{2^k}$$ when a point travels from $y_1$ towards $y_2$. Then we can similarly define $x_k^2$ for $k=0,...,N_2$ with travel direction from $y_2$ to $y_1$.
By using points $x_k^1$ and $x_k^2$ together with the end points $y_1$ and $y_2$, we can divide $\gamma$ into $(N_1+N_2 +3)$ nonoverlapping (modulo end points) subcurves $\gamma_{\nu}$, $\nu\in[-N_{1}-1,N_2+1]$. Note that a curve containing one end point of $\gamma$, as well as the middle subcurve between $x_0^1$ and $x_0^2$, may degenerate. All subcurves $\gamma_{\nu}$ are Kobayashi $\lambda$-quasi-geodesics between their repective end points, and
\begin{align}\label{est8}
&\delta_{\Omega}(u)\leq \frac{D}{2^{|\nu|-1}}, \text{ if } u\in \gamma_{\nu},\notag\\
&\delta_{\Omega}(u)\geq\frac{D}{2^{|\nu|}}, \text{ if } u \text{ is one end point of } \gamma_{\nu}.
\end{align}
It thus follows from (\ref{est}),(\ref{est8}) and the definition of the quasi-geodesic that there exists a constant $A>2$ such that
\begin{align}\label{ineq}
 l_{k}(\gamma_{\nu})\leq \lambda\log\left(1+A\frac{2^{|\nu|}}{D}l_{d}(\gamma_{\nu})\right).
\end{align}
And by (\ref{def}) and (\ref{est5}), we have
\begin{align*}
l_{k}(\gamma_{\nu})\geq\frac{l_{d}(\gamma_{\nu})}{2C\left(\frac{D}{2^{|\nu|-1}}\right)^{\frac{1}{m}}}=
\frac{2^{\frac{|\nu|-1}{m}}}{2CD^{\frac{1}{m}}}l_{d}(\gamma_{\nu}),
\end{align*}
where $C$ is the constant from Definition \ref{def2}.
It's easy to see that, for any $N\in\mathbb{N}$ there exists $C(N)>0$ such that
$$\log(1+x)\leq C(N)x^{1/N},$$
for $x\geq 0$.
Then for all $N>m$, clearly
\begin{align}\label{est13}
\frac{2^{\frac{|\nu|-1}{m}}}{2CD^{\frac{1}{m}}}l_{d}(\gamma_{\nu})\leq l_{k}(\gamma_{\nu})\leq \lambda C(N)A^{\frac{1}{N}}\frac{2^{\frac{|\nu|}{N}}}{D^{\frac{1}{N}}}l_{d}^{\frac{1}{N}}(\gamma_{\nu}),
\end{align}
which implies that
\begin{align}\label{ineq2}
l_{d}(\gamma_{\nu})\leq C' \left(\frac{D}{2^{|\nu|}}\right)^{\frac{\frac{1}{m}-\frac{1}{N}}{1-\frac{1}{N}}},
\end{align}
where $C'=\left(2^{1+\frac{1}{m}}\lambda C(N)A^{\frac{1}{N}}C\right)^{\frac{1}{1-1/N}}$. Suppose $z=\gamma(t)\in\gamma_{\nu}$ for some $\nu$. This yields that
\begin{align}\label{est7}
l_d(\gamma|[0,t])\wedge l_d(\gamma|[t,1])\leq C'\sum\limits_{j\geq |\nu|}\left(\frac{D}{2^{|\nu|}}\right)^{\frac{\frac{1}{m}-\frac{1}{N}}{1-\frac{1}{N}}}\leq 2C'\left(\frac{D}{2^{|\nu|}}\right)^{\frac{\frac{1}{m}-\frac{1}{N}}{1-\frac{1}{N}}}.
\end{align}
Moreover, by formulas (\ref{ineq}) and (\ref{ineq2}), we obtain
\begin{align*}
l_{k}(\gamma_{\nu})\leq \log\left(1+2AC'\left(\frac{D}{2^{|\nu|}}\right)^{\frac{\frac{1}{m}-1}{1-\frac{1}{N}}}\right).
\end{align*}
Therefore, we only need to estimate $\delta_{\Omega}(z)$.
Let $x$ be one end point of $\gamma_{\nu}$. By the estimate ($\ref{est4}$), we conclude that
\begin{align*}
K_{\Omega}\left(x, \:z\right) \geq \frac{1}{2}\left|\log \frac{d\left(x,H\right)}{d\left(z,H\right)}\right|
\end{align*}
holds for any complex hyperplane $H \subset \mathbb{C}^{n}$ with $H \cap \Omega=\emptyset$.
Therefore, it follows from Definition \ref{def2} that
$$\frac{1}{2}\log\frac{\delta_{\Omega}(x)}{C\delta^{\frac{1}{m}}_{\Omega}(z)}\leq K_\Omega(x,z)\leq l_{k}(\gamma_{\nu})\leq \log\left(1+2AC'\left(\frac{D}{2^{|\nu|}}\right)^{\frac{\frac{1}{m}-1}{1-\frac{1}{N}}}\right).$$
This guarantees that
\begin{align*}
\frac{\delta_{\Omega}(x)}{C\delta^{\frac{1}{m}}_{\Omega}(z)}\leq \left(1+2AC'\left(\frac{D}{2^{|\nu|}}\right)^{\frac{\frac{1}{m}-1}{1-\frac{1}{N}}}\right)^{2}
\end{align*}
and
\begin{align*}
\delta_{\Omega}(z)&\geq \frac{\delta^{m}_{\Omega}(x)}{C''\left(\frac{D}{2^{|\nu|}}\right)^{\frac{\frac{1}{m}-1}{1-\frac{1}{N}}\cdot 2m}}\\
&\geq \frac{1}{C''}\left(\frac{D}{2^{|\nu|}}\right)^{m-\frac{2-2m}{1-1/N}},
\end{align*}
where $C''=(4AC')^{2m}C^{m}$.
Hence by (\ref{est7}) we deduce that
\begin{align}\label{est15}
\delta_{\Omega}(z)\geq \tilde{C}\left(l_d(\gamma|[0,t])\wedge l_d(\gamma|[t,1])\right)^{\alpha},
\end{align}
where
$$\alpha=\frac{3m-2-\frac{m}{N}}{\frac{1}{m}-\frac{1}{N}}\;\;\;\;\mbox{and}\;\;\;\;\tilde{C}=\frac{{C''}}{2C'}.$$

This implies that $\tilde{C}$ depends only on $m$ and $C$ defined in (\ref{def}), and $\alpha$ depends only on $N$ and $m$.
By taking $N\rightarrow \infty$, we have
$\alpha\rightarrow 3m^2-2m$.
Note that, if we increase $N$, then $\tilde{C}$ will become smaller. Therefore, for any $\alpha>3m^2-2m$ there exists $\tilde{C}$ such that (\ref{est15}) holds.
This completes the proof.
\end{proof}

%
%
The following result is a direct consequence of the estimation (\ref{est4}) from Subsection \ref{km}.
\begin{lemma}\label{est100}
Let $\Omega$ be a bounded convex domain in $\mathbb{C}^n$ with $n\geq 2$ and $\omega_0\in\Omega$. Then there exists $K>0$ such that the Kobayashi metric
\begin{align}\label{est3}
K_{\Omega}(z,\: \omega_0)\geq\frac{1}{2}\log\frac{1}{\delta_{\Omega}(z)}-K.
\end{align}
\end{lemma}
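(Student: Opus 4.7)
The plan is to apply Lemma \ref{est25}, i.e., the estimate (\ref{est4}), to a carefully chosen complex hyperplane $H = H(z)$ that is disjoint from $\Omega$. The goal is to produce $H$ satisfying $d(z, H) \leq \delta_{\Omega}(z)$ together with a uniform lower bound $d(\omega_0, H) \geq \delta_{\Omega}(\omega_0)$; plugging both estimates into (\ref{est4}) immediately yields
\begin{equation*}
K_{\Omega}(z, \omega_0) \;\geq\; \tfrac{1}{2}\log \frac{d(\omega_0, H)}{d(z, H)} \;\geq\; \tfrac{1}{2}\log\frac{1}{\delta_{\Omega}(z)} - K,
\end{equation*}
with $K := \bigl(-\tfrac{1}{2}\log \delta_{\Omega}(\omega_0)\bigr) \vee 0$, a quantity depending only on $\omega_0$.

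To construct $H$, I would first pick $p \in \partial \Omega$ realizing $|z - p| = \delta_{\Omega}(z)$. Convexity of $\Omega$ furnishes a real affine hyperplane $T$ through $p$ that supports $\Omega$, in the sense that $T \cap \Omega = \emptyset$. Let $V := (T - p) \cap i(T - p)$, where $i$ denotes multiplication by the imaginary unit; a short dimension count shows that $V$ is the maximal $\mathbb{C}$-linear subspace of the real hyperplane $T - p$ and has complex codimension one. Then $H := p + V$ is a complex affine hyperplane in $\mathbb{C}^n$, and since $H \subset T$ we automatically have $H \cap \Omega = \emptyset$, so Lemma \ref{est25} is applicable.

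The two distance estimates are then immediate. First, since $p \in H$, we have $d(z, H) \leq |z - p| = \delta_{\Omega}(z)$. Second, letting $q$ denote the nearest-point projection of $\omega_0$ onto $T$, the segment from $\omega_0 \in \Omega$ to $q \in T$ must exit $\Omega$, hence cross $\partial \Omega$ at some point $p'$ that is closer to $\omega_0$ than $q$ is; therefore
\begin{equation*}
d(\omega_0, H) \;\geq\; d(\omega_0, T) \;=\; |\omega_0 - q| \;\geq\; |\omega_0 - p'| \;\geq\; \delta_{\Omega}(\omega_0).
\end{equation*}

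I expect the only mildly delicate point to be the passage from the real supporting hyperplane, which convexity always yields (even without any boundary regularity), to a complex hyperplane of the same codimension contained inside it; this is handled uniformly by the algebraic construction of $V$ above. The case $\delta_{\Omega}(z) \geq \delta_{\Omega}(\omega_0)$ needs no separate argument, for then the right-hand side of the target inequality is already nonpositive while $K_{\Omega}(z, \omega_0) \geq 0$ always holds.
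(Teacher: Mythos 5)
Your proof is correct, and it is exactly the argument the paper leaves implicit: the paper dismisses this lemma as ``a direct consequence of the estimation (\ref{est4})'' without detail, and your construction of the complex hyperplane $H\subset T$ through the nearest boundary point, together with the two distance bounds $d(z,H)\leq\delta_\Omega(z)$ and $d(\omega_0,H)\geq\delta_\Omega(\omega_0)$, is the standard way to extract the desired bound from Lemma~\ref{est25}. The observation that the case $\delta_\Omega(z)\geq\delta_\Omega(\omega_0)$ is trivial (since $K_\Omega\geq 0$) correctly handles the absolute value in (\ref{est4}).
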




\bigskip
By using Lemma \ref{uniform}, we are now in a position to prove Theorem \ref{thm}.

\textbf{Proof of Theorem \ref{thm}.} Without loss of generality we may assume that diam$(\Omega)\leq 1$ (by scaling $\Omega$ if necessary).
 Fix $\omega\in\Omega$. By (\ref{est}) and (\ref{est3}), it follows immediately that the Gromov product $(x|y)_{\omega}$ satisfies:
\begin{align*}
2(x|y)_{\omega}&=K_{\Omega}(x,\omega)+K_{\Omega}(y,\omega)-K_{\Omega}(x,y)\\
&\geq \frac{1}{2}\log \frac{1}{\delta_{\Omega}(x)}+\frac{1}{2}\log \frac{1}{\delta_{\Omega}(y)}-\frac{1}{2}\log \frac{(\sqrt{\delta_{\Omega}(x)\delta_{\Omega}(y)}+A|x-y|)^2}{\delta_{\Omega}(x)\delta_{\Omega}(y)}-2K\\
&=\frac{1}{2}\log\frac{1}{\delta_{\Omega}(x)\delta_{\Omega}(y)+|x-y|(2A\sqrt{\delta_{\Omega}(x)\delta_{\Omega}(y)}+A^2 |x-y|)}-2K\\
&\geq \frac{1}{2}\log\frac{1}{\delta_{\Omega}(x)\delta_{\Omega}(y)+(A^2+2A)|x-y|}-2K,
\end{align*}
where $A$ is the constant from Lemma \ref{nik} such that (\ref{est}) holds.

In order to estimate the Euclidean length of the geodesic $[x,y]$, there are two cases to consider:

\textbf{Case a}: $|x-y|\geq (\delta_{\Omega}(x)\delta_{\Omega}(y))^2$.
Hence
\begin{align*}
(x|y)_{\omega}&\geq \frac{1}{4}\log \frac{1}{(A+1)^2|x-y|^{\frac{1}{2}}}-K\\
&\geq \frac{1}{8}\log \frac{1}{|x-y|}-K',
\end{align*}
where $K'=K-\frac{1}{4}\log \frac{1}{(A+1)^2}$. By using the definition of the Gromov product, we know that
\begin{align}\label{est19}
K_{\Omega}(\omega,[x,y])\geq(x|y)_{\omega}\geq \frac{1}{8}\log \frac{1}{|x-y|}-K'.
\end{align}
Thus by Lemma \ref{nik}, for any $z\in[x,y]$ we see that there exists $K''>0$ such that
$$\frac{1}{2}\log\frac{1}{\delta_{\Omega}(z)}+K'' \geq K_{\Omega}(\omega,z)\geq \frac{1}{8}\log \frac{1}{|x-y|}-K'.$$
Choosing a point $z\in [x,y]$ with $l_{d}([x,z])=l_d([z,y])=\frac{1}{2}l_{d}([x,y])$,
then by Lemma \ref{uniform} we now have
$$l_{d}([x,y])\leq 2\left(\frac{e^{2K'+2K''}}{\tilde{C}}\right)^{\frac{1}{\alpha}}|x-y|^{\frac{1}{4\alpha}}.$$
Therefore \textbf{Case a} is proved.

\textbf{Case b}: $|x-y|\leq (\delta_{\Omega}(x)\delta_{\Omega}(y))^2$.
It follows from Lemma \ref{est2} that
$$k_{\Omega}(z;v)\geq\frac{|v|}{2\delta_{\Omega}(z)}\geq \frac{|v|}{2},$$
since diam$(\Omega)\leq 1$.
Thus
\begin{align}\label{est20}
\frac{1}{2}l_{d}([x,y])\leq l_{k}([x,y])= K_{\Omega}(x,y)&\leq \log\left(1+A\frac{|x-y|}{\delta_{\Omega}(x)\wedge\delta_{\Omega}(y)}\right)\\
&\leq \log(1+A|x-y|^{\frac{1}{2}})\notag\\
&\leq A|x-y|^{\frac{1}{2}}\notag,
\end{align}
which implies that $l_{d}([x,y])\leq c_1|x-y|^{c_2}$,
where
$$c_1=2\left(\frac{e^{2K'+2K''}}{\tilde{C}}\right)^{\frac{1}{\alpha}}\vee (2A)\;\;\;\;\mbox{and}\;\;\;\; c_2=\frac{1}{4\alpha}<\frac{1}{12m^2-8m}.$$
This completes the first part of the proof.

\bigskip
For the second part, we only need slight modifications of (\ref{est19}) and (\ref{est20}).

Assume that $(\Omega,\: K_{\Omega})$ is Gromov hyperbolic ($\delta$-hyperbolic) and $\gamma$ is a Kobayashi $\lambda$-quasi-geodesic connecting $x,\:y$ with $\lambda\geq 1$. Then it follows from Theorem \ref{sta} that the Kobayashi Hausdorff distance between
$[x,y]$ and the image of $\gamma$ is no more than $R=R(\delta,\lambda)$. Thus, we can take
\begin{align*}
 K_{\Omega}(\omega,\gamma)+R\geq K_{\Omega}(\omega,[x,y])\geq(x|y)_{\omega}\geq \frac{1}{8}\log \frac{1}{|x-y|}-K'
\end{align*}
instead of (\ref{est19}). Also we can take
\begin{align*}
\frac{1}{2\lambda}l_{d}(\gamma)\leq\frac{1}{\lambda}l_{k}(\gamma)\leq l_{k}([x,y])= K_{\Omega}(x,y)&\leq\log\left(1+A\frac{|x-y|}{\delta_{\Omega}(x)\wedge\delta_{\Omega}(y)}\right)
\end{align*}
instead of (\ref{est20}). Furthermore, noting that these changes make no influence on the constant $c_2$, we complete the proof of the second part.
\qed

 \begin{rmk}\label{hh}
Suppose that $\Omega$ is a bounded $m$-convex complex domain. For any two points $x,y\in\Omega$, there exists a complex geodesic which contains $x,y$ in its image. According to a well-known result due to Hardy-Littlewood, any complex geodesic in $\Omega$ extends continuously to its boundary $\partial \Omega$.

Conversely, Mercer \cite{MercerComplex} proved that: for any two points $x,y\in\overline{\Omega}$, there is a complex geodesic whose continuous extension contains $\{x, \:y\}$ in its image. Thus the first part of Theorem \ref{thm} also holds for $x,\:y\in\overline{\Omega}.$
 \end{rmk}

\bigskip
\section{\noindent{{\bf Strongly pseudoconvex domains}}}\label{bb}
In this part we will establish a similar result for strongly pseudoconvex domains. The goal of this section is to prove Theorem \ref{thm-2} and Corollary \ref{cor}.
At first we need some auxiliary results.
\begin{lemma}[Lemma 4.1,\cite{balogh2000gromov}]\label{est12}
Let $\Omega$ be a bounded strongly pseudoconvex domain in $\mathbb{C}^n(n\geq 2)$ with $C^2$-smooth boundary. There exists $C>0$ such that for any $x,\: y\in\Omega$,
\begin{align}\label{est21}
K_{\Omega}(x, y) \geq \frac{1}{2}\left|\log\frac{\delta_{\Omega}(x)}{\delta_{\Omega}(y)}\right|-C.
\end{align}
\end{lemma}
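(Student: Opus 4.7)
My plan is to reduce Lemma \ref{est12} to the convex case (Lemma \ref{est25}) by combining the local biholomorphic equivalence between strongly pseudoconvex $C^2$-smooth boundary points and strongly convex ones with a localization principle for the Kobayashi distance. Without loss of generality assume $\delta_\Omega(x)\le \delta_\Omega(y)$; if $\delta_\Omega(x)\ge \epsilon_0$ for some fixed $\epsilon_0>0$, the estimate is immediate because both $\delta$'s lie in the bounded interval $[\epsilon_0,\operatorname{diam}(\Omega)]$. Thus I may assume $\delta_\Omega(x)<\epsilon_0$ and pick $p\in\partial\Omega$ with $|x-p|=\delta_\Omega(x)$.

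By the standard local convexification of strongly pseudoconvex $C^2$-smooth boundary points, there exist a neighborhood $U_p$ of $p$ and a biholomorphism $\Phi_p\colon U_p\to V_p\subset \mathbb{C}^n$ such that $\Phi_p(\Omega\cap U_p)$ is strongly convex. Since $\Phi_p$ is smooth it is bi-Lipschitz on compacta, so $\delta_{\Phi_p(\Omega\cap U_p)}(\Phi_p(z))\asymp \delta_\Omega(z)$ for $z$ near $p$. Choose a complex hyperplane $H$ disjoint from $\Phi_p(\Omega\cap U_p)$ and tangent at $\Phi_p(p)$; strong convexity of the local picture (after an infinitesimal outward translation) allows this, and then $d(\Phi_p(z),H)\asymp\delta_\Omega(z)$ on a smaller neighborhood $V\Subset U_p$. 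Applying Lemma \ref{est25} to the convex domain $\Phi_p(\Omega\cap U_p)$ with the hyperplane $H$, together with the biholomorphic invariance of $K$, yields for $z,w\in \Omega\cap V$
\begin{align*}
K_{\Omega\cap U_p}(z,w)\ge \tfrac{1}{2}\left|\log\frac{\delta_\Omega(z)}{\delta_\Omega(w)}\right|-C_1.
\end{align*}

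To pass from $K_{\Omega\cap U_p}$ to $K_\Omega$ I would invoke the localization principle of Royden--Graham type for strongly pseudoconvex domains: there exist constants $c\in(0,1)$ and $C_2>0$ such that $K_\Omega(z,w)\ge c\,K_{\Omega\cap U_p}(z,w)-C_2$ for $z,w\in \Omega\cap V$. This is established via local plurisubharmonic peak functions at $p$ built from the strictly psh defining function $\rho$ of $\Omega$, typically of the form $\psi(z)=\exp\bigl(-\alpha\rho(z)-\beta|z-p|^2\bigr)$ for suitable $\alpha,\beta>0$. If $y\in \Omega\cap V$, combining the two displays finishes the proof. If $y\notin V$, let $\tilde y$ be the first exit point of a Kobayashi geodesic from $x$ to $y$ through $\partial V$; then $\delta_\Omega(\tilde y)$ is bounded below by a fixed positive constant $c_V$, so
$$K_\Omega(x,y)\ge K_\Omega(x,\tilde y)\ge \tfrac{1}{2}\log\frac{c_V}{\delta_\Omega(x)}-C_3,$$
and the discrepancy $\log(\delta_\Omega(y)/c_V)$ is bounded above by a fixed constant (since $\delta_\Omega(y)\le \operatorname{diam}(\Omega)$) and gets absorbed into the final $C$.

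The main obstacle is the localization estimate for the Kobayashi metric; everything else is bookkeeping. Verifying it cleanly requires either a careful peak-function argument as above, or an equivalent approach through the comparison of $k_\Omega$ with the appropriate Sibony pseudometric, or through the scaling argument used in \cite{balogh2000gromov}, where the rescaled copies of $\Omega$ near $p$ converge to the unit ball.
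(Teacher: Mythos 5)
The paper does not prove this lemma itself; it cites it as Lemma 4.1 of \cite{balogh2000gromov}, where the estimate is obtained from the sharp two-sided expansion of the infinitesimal Kobayashi metric (recorded here as Lemma \ref{est9}), whose normal part $\tfrac{(1-C\delta^{1/2})|v_N|}{2\delta}$ integrates to exactly $\tfrac12\log$ with a bounded error. Your plan --- local Narasimhan convexification plus the supporting-hyperplane estimate of Lemma \ref{est25} --- is a genuinely different, reasonable route, but as written it has two gaps, both of which destroy the constant $\tfrac12$.

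First, the localization you invoke, $K_\Omega(z,w)\ge c\,K_{\Omega\cap U_p}(z,w)-C_2$ with $c\in(0,1)$, only yields $K_\Omega(x,y)\ge\tfrac{c}{2}\big|\log\tfrac{\delta_\Omega(x)}{\delta_\Omega(y)}\big|-C$, which is strictly weaker than the stated lemma; the coefficient $\tfrac12$ matters later in the paper (Lemma \ref{sep} lets $N\to\infty$ precisely to recover $\alpha\to4$). To salvage your approach you would need the \emph{additive} form of localization ($c=1$), or a quantified multiplicative one with $c=1-O(\delta_\Omega(z)^\beta)$ for some $\beta>0$ so the error integrates to a bounded quantity --- a nontrivial estimate of essentially the same depth as the lemma and of the same flavor as Lemma \ref{est9}. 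Second, your exit-point step asserts that $\delta_\Omega(\tilde y)\ge c_V$ for $\tilde y\in\partial V\cap\Omega$; this is false when $V$ is a neighborhood of $p$ in $\mathbb{C}^n$, since $\partial V\cap\Omega$ contains points arbitrarily close to $\partial\Omega$. What \emph{is} bounded below on $\partial V\cap\Omega$ is the distance $d(\Phi_p(\tilde y),H)$ to the supporting hyperplane (it is $\gtrsim r_V^2$ by strict convexity of the local picture), and the argument should be run with that quantity. Your suggested alternative via a holomorphic peak function $\psi$ at $p$ and the Poincar\'e lower bound $K_\Omega(x,y)\ge \rho_{\mathbb D}(\psi(x),\psi(y))$ is the cleaner way to reach the exact $\tfrac12$ and would also handle the case $y\notin V$ for free; but that sketch is not carried out either, and for merely $C^2$ boundary the construction of peak functions with the required asymptotics $1-|\psi(z)|\asymp\delta_\Omega(z)$ needs justification.
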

The estimate also shows that $(\Omega,\: K_{\Omega})$ is complete and thus it is a geodesic space.
\begin{lemma}[Proposition 1.2,\cite{balogh2000gromov}]\label{est9}
Let $\Omega$ be a bounded strongly pseudoconvex domain in $\mathbb{C}^n(n\geq 2)$ with $C^2$-smooth boundary, Then, for every $\epsilon>0$, there exists $\epsilon_{0}>0$ and $C \geq 0$ such that, for all $z \in N_{\epsilon_{0}}(\partial \Omega) \cap \Omega$ and all $v \in \mathbb{C}^{n}$,
\begin{align}\label{est10}
\begin{aligned}
\left(1-C \delta_\Omega^{1 / 2}(z)\right) &\left(\frac{\left|v_{N}\right|^{2}}{4 \delta_\Omega^{2}(z)}+(1-\epsilon) \frac{L_{\rho}\left(\pi(z) ; v_{H}\right)}{\delta_\Omega(z)}\right)^{1 / 2} \leq K(z ; v) \\
& \leq\left(1+C \delta_\Omega^{1 / 2}(z)\right)\left(\frac{\left|v_{N}\right|^{2}}{4 \delta_\Omega^{2}(z)}+(1+\epsilon) \frac{L_{\rho}\left(\pi(z) ; v_{H}\right)}{\delta_\Omega(z)}\right)^{1 / 2}.
\end{aligned}
\end{align}
\end{lemma}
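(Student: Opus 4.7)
The estimate is of Graham--Ma type: near each boundary point the Kobayashi metric ought to be captured, up to a multiplicative error $1+O(\delta^{1/2})$, by the Kobayashi metric of an osculating Siegel half-space. My plan would be to (i) reduce to the local picture by a biholomorphic change of coordinates and a quantitative localization principle, (ii) sandwich the resulting domain between two Siegel model domains whose Levi forms bracket $L_\rho(p;\cdot)$ within a factor $1 \pm \epsilon$, and (iii) invoke the explicit Kobayashi formula on a Siegel half-space together with monotonicity $K_{\Omega_1}\geq K_{\Omega_2}$ whenever $\Omega_1\subset \Omega_2$.

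\textbf{Local normalization.} First I would fix $p = \pi(z)\in\partial\Omega$. Because $\partial\Omega$ is $C^2$-smooth and strongly pseudoconvex, after a biholomorphic change of coordinates centered at $p$ the defining function can be written in the form
\[
 \rho(w) = 2\,\mathrm{Re}(w_1) + L_\rho(p; w_H) + O(|w|^3),
\]
where $w=(w_1,w_H)$ with $w_1$ the complex normal direction and $w_H = (w_2,\dots,w_n)$ the complex tangential directions at $p$. In these coordinates $\delta_\Omega(z) = -\rho(z)(1+o(1))$ and the decomposition $v=v_N+v_H$ is exactly the $w_1$ versus $w_H$ split.

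\textbf{Sandwiching and the model.} Next, for any $\epsilon>0$, on a small enough neighborhood $U$ of $p$ the cubic remainder in $\rho$ is dominated by $\epsilon$ times the Levi quadratic form after the natural anisotropic rescaling, giving the inclusions
\[
 \Omega^-_\epsilon \cap U \;\subset\; \Omega \cap U \;\subset\; \Omega^+_\epsilon \cap U,
\]
where $\Omega^\pm_\epsilon$ are translates of the Siegel half-spaces $\{2\,\mathrm{Re}(w_1) + (1\pm\epsilon)L_\rho(p;w_H)<0\}$. On any such Siegel half-space $E$ the Kobayashi metric is explicitly computable; for $z$ with $z_H = 0$ one obtains $K_E(z;v)^2 = \frac{|v_N|^2}{4\,\delta_E^2(z)}+\frac{Q(v_H)}{\delta_E(z)}$, where $Q$ is the defining Hermitian form. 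Combining this with a quantitative localization principle (replacing $K_\Omega$ by $K_{\Omega\cap U}$ at cost of a $1+O(\delta^{1/2})$ factor, via a plurisubharmonic bumped peak function at $p$) and invoking monotonicity would yield both inequalities of \eqref{est10}.

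\textbf{Main obstacle.} The delicate point will be tracking two independent error mechanisms so that both produce the same $O(\delta^{1/2})$ rate. The cubic term $O(|w|^3)$ in $\rho$ enters through tangential displacements of natural scale $\delta^{1/2}$, so that term already contributes a $\delta^{1/2}$ correction once divided by the Siegel normalization. Separately, the localization principle relies on constructing holomorphic peak functions at $p$ whose comparison with an arbitrary analytic disc through $z$ costs a $\delta^{1/2}$ factor as well. Reconciling these two sources --- essentially showing that the intrinsic parabolic boundary scaling matches the analytic peak-function estimates --- is the technical heart of the argument, and is the reason the prefactor is exactly $1\pm C\delta^{1/2}$ rather than a weaker $o(1)$.
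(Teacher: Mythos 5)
The paper cites this result from Balogh--Bonk (Proposition 1.2 of \cite{balogh2000gromov}) and gives no proof of its own, so there is no in-paper argument to compare against; your sketch is instead reconstructing the Balogh--Bonk argument. Your overall plan --- normal coordinates at $p=\pi(z)$, sandwiching between Siegel models whose Levi forms bracket $L_\rho(p;\cdot)$ within a factor $1\pm\epsilon$, the explicit Kobayashi formula on the model, and a quantitative localization via peak functions together with the decreasing property of the Kobayashi metric --- is indeed the right route in spirit.

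There is, however, a genuine gap in your account of the error rate. The lemma assumes only $C^2$-smoothness, and for a $C^2$ defining function the normal-form expansion reads
$\rho(w) = 2\,\mathrm{Re}(w_1) + L_\rho(p;w_H) + E(w)$ with $E(w) = o(|w|^2)$,
governed by the modulus of continuity of the second derivatives; the Taylor remainder $O(|w|^3)$ you use would require $C^3$ regularity, which is not hypothesized. Consequently your ``main obstacle'' paragraph misattributes the $C\delta^{1/2}$ prefactor: the anisotropic-rescaling computation $|w|^3 \sim \delta^{3/2}$ giving a relative error $\delta^{1/2}$ is a $C^3$ argument, and it is not available here. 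In fact the structure of the statement already encodes the correct bookkeeping. The Taylor/continuity error of the Levi form is exactly what the $(1\pm\epsilon)$ factors on $L_\rho(\pi(z);v_H)$ are designed to absorb, by shrinking the collar $\epsilon_0=\epsilon_0(\epsilon)$ so that the $o(|w|^2)$ remainder stays below $\epsilon$ times the Levi quadratic form on the relevant scale. The $(1\pm C\delta_\Omega^{1/2}(z))$ prefactor is of entirely separate origin: it arises from the quantitative localization (replacing $K_\Omega$ by $K_{\Omega\cap U}$) and the comparison with osculating model domains --- precisely the peak-function and analytic-disc estimates you mention as your ``second source.'' Your sketch thus conflates the two mechanisms: it double-counts the Taylor error as a source of $\delta^{1/2}$ while not actually establishing that the localization yields a multiplicative error as sharp as $1+O(\delta^{1/2})$, which is the genuinely technical point and the one that needs the detailed construction found in Balogh--Bonk.
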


By using Lemma \ref{est9}, we immediately obtain that, for some $C_1>0$,
\begin{align}\label{est11}
k_{\Omega}(z;v)\geq C_1\frac{|v|}{\delta_{\Omega}^{\frac{1}{2}}(z)}.
\end{align}
In fact, if $z\in N_{\epsilon_{0}}(\partial \Omega) \cap \Omega$, then (\ref{est10}) obviously implies (\ref{est11}). For those $z\in\Omega \backslash N_{\epsilon_{0}}(\partial \Omega)$, we have
$$k_{\Omega}\left(z,\: \frac{v}{|v|}\right)\geq\delta_0>0$$
for some $\delta_0>0$ and $\delta_{\Omega}(z)>\epsilon_0$. Thus we obtain the desired inequality (\ref{est11}) for some $C_1>0$.

\bigskip
The following result is similar to Lemma \ref{uniform}, which can be viewed as the (separation property) geometric characteristic of the Kobayashi (quasi-) geodesic.

\begin{lemma}\label{sep} Suppose that $\Omega$ is a bounded stronly pseudoconvex domain in $\mathbb{C}^n(n\geq 2)$ with $C^2$-smooth boundary. And suppose that $\gamma$ is a Kobayashi $\lambda$-quasi-geodesic joining $y_1$ and $y_2$ with $\lambda\geq 1$. Then for any $\alpha>4$, there exists a constant $\tilde{C}>0$ such that, for every $z=\gamma(t)\in\gamma$,
\begin{align}
\delta_{\Omega}(z)\geq \tilde{C}(l_{d}(\gamma|[0,t])\wedge l_{d}(\gamma|[t,1]))^{\alpha}.
\end{align}
\end{lemma}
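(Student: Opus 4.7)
The plan is to mimic the dyadic decomposition used in Lemma \ref{uniform}, substituting the strongly pseudoconvex versions of the Kobayashi bounds for the $m$-convex ones. First I would set $D=\max_{z\in\gamma}\delta_\Omega(z)$, pick integers $N_i$ with $D/2^{N_i+1}\le\delta_\Omega(y_i)\le D/2^{N_i}$, and for each $k$ let $x_k^i$ be the first point on $\gamma$ where $\delta_\Omega$ first reaches $D/2^k$ as one travels from $y_i$. This partitions $\gamma$ into subarcs $\gamma_\nu$, $\nu\in[-N_1-1,N_2+1]$, each of which is itself a Kobayashi $\lambda$-quasi-geodesic, and on which $\delta_\Omega(u)\le D/2^{|\nu|-1}$ throughout and $\delta_\Omega(u)\ge D/2^{|\nu|}$ at the endpoints.

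\textbf{Upper and lower bounds on $l_k(\gamma_\nu)$.} Since a $C^2$-smooth boundary is Dini-smooth, Lemma \ref{nik} combined with the $\lambda$-quasi-geodesic property gives the upper bound
$$l_k(\gamma_\nu)\le \lambda\log\!\left(1+\tfrac{A\cdot 2^{|\nu|}}{D}\,l_d(\gamma_\nu)\right),$$
while the strongly pseudoconvex lower bound (\ref{est11}) yields
$$l_k(\gamma_\nu)\ge C_1\,\tfrac{2^{(|\nu|-1)/2}}{D^{1/2}}\,l_d(\gamma_\nu).$$
Here the crucial improvement over Lemma \ref{uniform} is the exponent $1/2$ (replacing $1/m$). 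Combining these with $\log(1+x)\le C(N)x^{1/N}$ and solving for $l_d(\gamma_\nu)$ produces
$$l_d(\gamma_\nu)\le C'\!\left(\tfrac{D}{2^{|\nu|}}\right)^{\!(1/2-1/N)/(1-1/N)}$$
for any $N>2$, and summing the geometric tail gives $l_d(\gamma|[0,t])\wedge l_d(\gamma|[t,1])\le 2C'(D/2^{|\nu|})^{(1/2-1/N)/(1-1/N)}$ whenever $z=\gamma(t)\in\gamma_\nu$.

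\textbf{Lower bound on $\delta_\Omega(z)$, and main obstacle.} The step which is genuinely different from Lemma \ref{uniform} is the lower bound on $\delta_\Omega(z)$: one cannot invoke the supporting-hyperplane estimate (\ref{est4}) from Lemma \ref{est25} because strongly pseudoconvex domains need not be convex. The proposal is to use Lemma \ref{est12} as its replacement, which directly delivers
$$\tfrac{1}{2}\log\tfrac{\delta_\Omega(x)}{\delta_\Omega(z)}-C\le K_\Omega(x,z)\le l_k(\gamma_\nu)\le \lambda\log\!\left(1+AC'(2^{|\nu|}/D)^{(1/2)/(1-1/N)}\right)$$
for any endpoint $x$ of $\gamma_\nu$. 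Exponentiating and using $\delta_\Omega(x)\ge D/2^{|\nu|}$ produces $\delta_\Omega(z)\gtrsim (D/2^{|\nu|})^{(2-1/N)/(1-1/N)}$; eliminating $D/2^{|\nu|}$ via the previous display yields the claimed inequality with exponent
$$\alpha=\frac{2-1/N}{1/2-1/N}\longrightarrow 4\quad\text{as }N\to\infty,$$
so every $\alpha>4$ is admissible. The main conceptual obstacle is therefore not the bookkeeping but verifying that Lemma \ref{est12} gives exactly the right logarithmic comparison of $\delta_\Omega$ that stands in for convexity; a minor but notable point is that the factor $\lambda$ from the quasi-geodesic hypothesis is absorbed into $\tilde C$ and does not enter $\alpha$, which is why the threshold $4$ is independent of $\lambda$.
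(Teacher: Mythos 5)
Your proof follows the same route as the paper's: the identical dyadic decomposition of $\gamma$ into subarcs $\gamma_\nu$, the pairing of the Dini-smooth upper bound from Lemma~\ref{nik} with the strongly pseudoconvex lower bound~(\ref{est11}), the derivation of $l_d(\gamma_\nu)\lesssim (D/2^{|\nu|})^{(1/2-1/N)/(1-1/N)}$, and the substitution of Lemma~\ref{est12} for the convex supporting-hyperplane estimate to lower-bound $\delta_\Omega(z)$. The one structural difference is how the additive constant $C$ in Lemma~\ref{est12} is handled: you exponentiate directly and absorb $e^{2C}$ into $\tilde C$, whereas the paper splits into the cases $\tfrac12|\log(\delta_\Omega(x)/\delta_\Omega(z))|\leq NC$ and $>NC$; Case I gives $\delta_\Omega(z)\geq e^{-2NC}\,D/2^{|\nu|}$ outright, and Case II absorbs the $-C$ into a $\tfrac{N-1}{2N}$ prefactor. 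Both bookkeeping schemes deliver the same $N\to\infty$ limit and the two-case split is merely cosmetic, so your simplification is fine.

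One assertion in your write-up needs caution: you claim that the factor $\lambda$ from the quasi-geodesic hypothesis is absorbed into $\tilde C$ and does not enter $\alpha$. If one carries the $\lambda$ that sits \emph{outside} the logarithm in $l_k(\gamma_\nu)\leq \lambda\log\bigl(1+AC'(D/2^{|\nu|})^{-1/(2(1-1/N))}\bigr)$ through the exponentiation, the exponent on $D/2^{|\nu|}$ picks up a factor of $\lambda$, so $\delta_\Omega(z)\gtrsim (D/2^{|\nu|})^{1+\lambda/(1-1/N)}$ rather than $1+1/(1-1/N)$, and the limiting threshold becomes $2(1+\lambda)$ rather than $4$. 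The paper's own proof silently drops this $\lambda$ when it writes $l_k(\gamma_\nu)\leq \log\bigl(1+2AC'(D/2^{|\nu|})^{-1/(2(1-1/N))}\bigr)$ after substituting~(\ref{ineq4}) into~(\ref{ineq3}), so your proof matches the paper's as written; but your explicit claim that the absorption is unproblematic is not justified by the argument you give (nor, as far as I can see, by the paper's), and for $\lambda>1$ the stated threshold $4$ is not actually reached by this line of reasoning.
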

\begin{proof}
Put $$D=\max\limits_{z\in \gamma} \delta_{\Omega}(z).$$
For $i=1,2$, let $N_i$ be the unique integer such that
$$\frac{D}{2^{N_i+1}}< \delta_{\Omega}(y_i)\leq \frac{D}{2^{N_i}}.$$
Define $x_k^1,x_k^2$ and $\gamma_{\nu}$ as in the proof of Lemma \ref{uniform}
and
\begin{align}\label{est17}
&\delta_{\Omega}(u)\leq \frac{D}{2^{|\nu|-1}}, \text{ if } u\in \gamma_{\nu},\notag\\
&\delta_{\Omega}(u)\geq\frac{D}{2^{|\nu|}}, \text{ if } u \text{ is one end point of } \gamma_{\nu}.
\end{align}
It thus follows from (\ref{est}) and (\ref{est17}) and the definition of $\lambda$-quasi-geodesic that there exists a constant $A>0$ such that
\begin{align}\label{ineq3}
 l_{k}(\gamma_{\nu})\leq \lambda \log\left(1+A\frac{2^{|\nu|}}{D}l_{d}(\gamma_{\nu})\right),
\end{align}
where $A$ is the constant given by Lemma \ref{nik} such that (\ref{est}) holds.

By the estimate (\ref{est11}), we have
\begin{align*}
l_{k}(\gamma_{\nu})\geq\frac{C_1l_{d}(\gamma_{\nu})}{\left(\frac{D}{2^{|\nu|-1}}\right)^{\frac{1}{2}}}=
\frac{C_12^{\frac{|\nu|-1}{2}}}{D^{\frac{1}{2}}}l_{d}(\gamma_{\nu}).
\end{align*}
It's easy to see that for any $N\in\mathbb{N}$, there exists $C(N)>0$ such that
$$\log(1+x)\leq C(N)x^{\frac{1}{N}},$$
for $x\geq 0$.
Thus, if we take $N>2$, we have
\begin{align*}
C_1\frac{2^{\frac{|\nu|-1}{2}}}{D^{\frac{1}{2}}}l_{d}(\gamma_{\nu})\leq l_{k}(\gamma_{\nu})\leq \lambda C(N)A^{\frac{1}{N}}\frac{2^{\frac{|\nu|}{N}}}{D^{\frac{1}{N}}}l_{d}^{\frac{1}{N}}(\gamma_{\nu}),
\end{align*}
which implies that
\begin{align}\label{ineq4}
l_{d}(\gamma_{\nu})\leq C' \left(\frac{D}{2^{|\nu|}}\right)^{\frac{\frac{1}{2}-\frac{1}{N}}{1-\frac{1}{N}}},
\end{align}
where $C'=(2^{\frac{1}{2}}\lambda C(N)A^{\frac{1}{N}}/C_1)^{\frac{N}{N-1}}$.

Assume that $z=\gamma(t)\in\gamma_{\nu}$ for some index $\nu$.  Therefore, we obtain
\begin{align}\label{est18}
l_d(\gamma|[0,t])\wedge l_d(\gamma|[t,1])\leq C'\sum\limits_{j\geq |\nu|}\left(\frac{D}{2^{j}}\right)^{\frac{\frac{1}{2}-\frac{1}{N}}{1-\frac{1}{N}}}\leq 2C'\left(\frac{D}{2^{|\nu|}}\right)^{\frac{\frac{1}{2}-\frac{1}{N}}{1-\frac{1}{N}}}.
\end{align}
Moreover, in light of formulae (\ref{ineq3}) and (\ref{ineq4}), we get
\begin{align*}
l_{k}(\gamma_{\nu})\leq \log\left(1+2AC'\left(\frac{D}{2^{|\nu|}}\right)^{\frac{-\frac{1}{2}}{1-\frac{1}{N}}}\right).
\end{align*}

Suppose $x$ is one end point of $\gamma_{\nu}$. The estimation $\delta_{\Omega}(z)$ consists of two cases. With the constant $C$ given by Lemma \ref{est12}, we distinguish two cases:

\textbf{Case I}: $\frac{1}{2}|\log\frac{\delta_{\Omega}(x)}{\delta_{\Omega}(z)}|\leq NC$.
Hence
$$\delta_{\Omega}(z)\geq e^{-2NC}\delta_{\Omega}(x)\geq e^{-2NC}\frac{D}{2^{|\nu|}}.$$

\textbf{Case II}: $\frac{1}{2}|\log\frac{\delta_{\Omega}(x)}{\delta_{\Omega}(z)}|> NC$.
By Lemma \ref{est12}, we obtain
$$K_{\Omega}(x,z)\geq \frac{1}{2}\left|\log\frac{\delta_{\Omega}(x)}{\delta_{\Omega}(z)}\right|-C\geq \frac{N-1}{2N}\left|\log\frac{\delta_{\Omega}(x)}{\delta_{\Omega}(z)}\right|,$$
which implies that
$$\frac{N-1}{2N}\log\frac{\delta_{\Omega}(x)}{\delta_{\Omega}(z)}\leq K_{\Omega}(x,z)\leq l_{k}(\gamma_{\nu})\leq \log\left(1+2AC'\left(\frac{D}{2^{|\nu|}}\right)^{\frac{-\frac{1}{2}}{1-\frac{1}{N}}}\right).$$
Thus it follows that
\begin{align*}
\frac{\delta_{\Omega}(x)}{\delta_{\Omega}(z)}\leq \left(1+2AC'\left(\frac{D}{2^{|\nu|}}\right)^{\frac{-\frac{1}{2}}{1-\frac{1}{N}}}\right)^{\frac{2N}{N-1}}
\end{align*}
and
\begin{align*}
\delta_{\Omega}(z)\geq \frac{1}{C''}\left(\frac{D}{2^{|\nu|}}\right)^{1+\frac{N^2}{(N-1)^2}},
\end{align*}
where $C''=(4AC')^{\frac{2N}{N-1}}$.
Now (\ref{est18}) implies that
\begin{align}\label{est16}
\delta_{\Omega}(z)\geq \tilde{C}\left(l_d(\gamma|[0,t])\wedge l_d(\gamma|[t,1])\right)^{\alpha},
\end{align}
where
$$\tilde{C}=\frac{{C''}\wedge e^{-2NC}}{C'}\;\;\;\mbox{and}\;\;\; \alpha=\left(1+\frac{N^2}{(N-1)^2}\right)\cdot\frac{1-\frac{1}{N}}{\frac{1}{2}-\frac{1}{N}}.$$

Noting that $\lim\limits_{N\rightarrow \infty}\alpha=4$, thus for any $\alpha>4$ there exists $\tilde{C}>0$ such that (\ref{est16}) holds.
This completes the proof.
\end{proof}

We remark that the proof of Theorem \ref{thm-2} follows almost the same line as the proof of Theorem \ref{thm}.
We only need a slight modification for the estimate of the Kobayashi metric. For
the sake of completeness, we present its simple proof here.
%


\textbf{Proof of Theorem \ref{thm-2}}. Scaling domain as necessary, we may assume without loss of generality that diam$(\Omega)\leq 1$.

Fix $\omega\in\Omega$. By (\ref{est21}), we have
$$K_{\Omega}(z,\omega)\geq \frac{1}{2}\log \frac{1}{\delta_{\Omega}(z)}-K,$$
for some $K>0$. Then by using (\ref{est}), we deduce that the Gromov product $(x|y)_{\omega}$ satisfies
\begin{align*}
2(x|y)_{\omega}&=K_{\Omega}(x,\omega)+K_{\Omega}(y,\omega)-K_{\Omega}(x,y)\\
&\geq \frac{1}{2}\log\frac{1}{\delta_{\Omega}(x)\delta_{\Omega}(y)+(A^2+2A)|x-y|}-2K,
\end{align*}
where $A$ is the constant in Lemma \ref{nik} such that (\ref{est}) holds.

\bigskip
With the notation in Theorem \ref{thm-2}, recall that $\gamma$ is a Kobayashi $\lambda$-quasi-geodesic connecting $x$ and $y$ with $\lambda\geq 1$. To estimate the Euclidean length of $\gamma$, we consider two cases:

\textbf{Case A}: $|x-y|\geq (\delta_{\Omega}(x)\delta_{\Omega}(y))^2$.
By the assumption, it follows that
\begin{align*}
(x|y)_{\omega}&\geq \frac{1}{4}\log \frac{1}{(A+1)^2|x-y|^{\frac{1}{2}}}-K\\
&\geq \frac{1}{8}\log \frac{1}{|x-y|}-K',
\end{align*}
where $K'=K-\frac{1}{4}\log \frac{1}{(A+1)^2}$. Then by the definition of Gromov product, we know that
$$K_{\Omega}(\omega,[x,y])\geq(x|y)_{\omega}\geq \frac{1}{8}\log \frac{1}{|x-y|}-K',$$
where $[x,\:y]$ denotes a Kobayashi geodesic connecting $x$ and $y$ in $\Omega$. Noticing that any bounded strongly pseudoconvex
domain in $C^n (n\geq2)$ with the Kobayashi metric is Gromov
hyperbolic (see e.g. Theorem \ref{cc}), then it follows from Theorem \ref{sta} that, there exists $R>0$ such that
$$K_{\Omega}(\omega,[x,y])\leq K_{\Omega}(\omega,\gamma)+R.$$
Thus by (\ref{est}) for any $z\in\gamma$, there exists $K''>0$ such that
$$\frac{1}{2}\log\frac{1}{\delta_{\Omega}(z)}+K'' \geq K_{\Omega}(\omega,z)\geq \frac{1}{8}\log \frac{1}{|x-y|}-K'-R.$$
Take a point $z=\gamma(t)$ with
$l_{d}(\gamma|[0,t])=l_d(\gamma|[t,1])=\frac{1}{2}l_{d}(\gamma)$.
Therefore, by Lemma \ref{sep}, we get that
$$l_{d}(\gamma)\leq 2\left(\frac{e^{2K'+2K''+2R}}{\tilde{C}}\right)^{\frac{1}{\alpha}}|x-y|^{\frac{1}{4\alpha}}.$$

\textbf{Case B}: $|x-y|\leq (\delta_{\Omega}(x)\delta_{\Omega}(y))^2$.
By the estimate (\ref{est11}) and the fact that diam$(\Omega)\leq1$, we obtain that
$k_{\Omega}(z;v)\geq C_1|v|$ for any $z\in\Omega$ and $v\in\mathbb{C}^n$,
which implies that
\begin{align*}
\frac{C_1}{\lambda}l_{d}(\gamma)\leq \frac{1}{\lambda}l_{k}(\gamma)\leq K_{\Omega}(x,y)&\leq \log\left(1+A\frac{|x-y|}{\delta_{\Omega}(x)\wedge\delta_{\Omega}(y)}\right)\\
&\leq \log(1+A|x-y|^{\frac{1}{2}})\\
&\leq A|x-y|^{\frac{1}{2}}.
\end{align*}
Hence we have $l_{d}(\gamma)\leq c_1|x-y|^{c_2}$,
where
$$c_1=2\left(\frac{e^{2K'+2K''+2R}}{\tilde{C}}\right)^{\frac{1}{\alpha}}\vee \frac{A\lambda}{C_1}\;\;\;\;\mbox{and}\;\;\;\;c_2=\frac{1}{4\alpha},$$
which completes the proof of Theorem \ref{thm-2}.
\qed


\bigskip
\textbf{Proof of Corollary \ref{cor}.}

Recall that $\varrho_{\Omega}$ is one of the Kobayashi metric, Bergman metric, Carath\'{e}odory metric and K\"{a}hler-Einstein metric of $\Omega$.

If $\gamma$ is a $\lambda$-quasi-geodesic joining $x$ and $y$ with respect to the metric $\varrho_{\Omega}$, then by using the fact recorded in Section \ref{usq}, we know that $\gamma$ is also a $(C\lambda)$-quasi-geodesic for the Kobayashi metric for some $C>0$.

 On the other hand, by Theorem \ref{gro2}, if $\Omega$ is Gromov hyperbolic with respect to $\varrho_{\Omega}$, then it is also Gromov hyperbolic with respect to the Kobayashi metric. Thus we can complete the proof by using Theorem \ref{thm} and Theorem \ref{thm-2}.
\qed

\bigskip
\section{\noindent{{\bf Applications }}}\label{appp}
We focus our attention in this section to present some applications for our results, i.e., the Gehring-Hayman type theorem (Theorems \ref{thm} and \ref{thm-2}) and the Separation property (Lemmas \ref{est14} and \ref{sep}). At first we prove the bi-H\"{o}lder equivalence between the Euclidean boundary and the Gromov boundary on certain complex domains. Secondly, we use this boundary correspondence to obtain some extension results for biholomorphisms, and more general rough quasi-isometries with respect to the Kobayashi metrics between the domains.

We begin with some necessary definitions and auxiliary results concerning Gromov hyperbolic geometry and morphisms between their boundaries at infinity. After that the proofs of Theorem \ref{app} and Corollary \ref{cor2} are given.

Recall that we have already defined the Gromov product and Gromov hyperbolic spaces in Subection \ref{GGG}. Furthermore, for a Gromov hyperbolic space $X$ one can define a class of {\it visual metrics} on $\partial_G X$ via the extended Gromov products, see \cite{BMHA,Schramm1999Embeddings}. For any metric $\rho_{G}$ in this class, there exist a parameter $\epsilon>0$ and a base point $w \in X$ such that
\begin{align}\label{ee}\rho_{G}(a, b) \asymp e^{ -\epsilon(a|b)_{w}}, \quad \text { for all } a, \:b \in \partial_{G} X.
\end{align}
Here we write $f \asymp g$ for two positive functions if there exists a constant $C \geq 1$ such that $(1 / C) f \leq g \leq C f .$ Any two metrics $d_{1}, \: d_{2}$ in the canonical class are called {\it snowflake equivalent} if and only if the identity map id : $\left(\partial_{G} X, \:d_{1}\right) \rightarrow\left(\partial_{G} X, \: d_{2}\right)$ is a snowflake map. Note that a homeomorphism $\phi:(X_1,\: d_1)\to (X_2, \:d_2)$ between two metric spaces is said to be {\it snowflake} if there exist $\lambda, \:\kappa>0$ such that, for any $x, \:y \in X_1$,
$$(1 / \lambda)d_1(x,\: y)^{\kappa} \leq d_2(\phi(x),\: \phi(y)) \leq \lambda d_1(x,\: y)^{\kappa}.$$

Now we recall the definitions of H\"older and power quasisymmetric mappings as follows. A homeomorphism $\phi:(X_1, \:d_1)\to (X_2,\: d_2)$ between two metric spaces is said to be {\it H\"older} if there exist $\lambda,\: \kappa>0$ such that, for any $x,\: y \in X_1$,
$$d_2(\phi(x),\: \phi(y)) \leq \lambda d_1(x,\: y)^{\kappa}.$$
Moreover, $\phi$ is called {\it bi-H\"older} if there exist $\lambda,\alpha>0$ such that, for any $x,y \in X_1$,
$$(1 / \lambda)d_1(x,\: y)^{1/\alpha} \leq d_2(\phi(x),\: \phi(y)) \leq \lambda d_1(x,\:y)^{\alpha}.$$

\begin{defn}
Let $\phi:(X_1,\: d_1)\to (X_2,\: d_2)$ be a homeomorphism between metric spaces, and $\lambda \geq 1$, $\kappa>0$ be constants.

If for all distinct points $x, \:y, \:z \in X_1$,
$$
\frac{d_2(\phi(x),\:\phi(z))}{d_2(\phi(x),\:\phi(y))} \leq \eta_{\kappa, \lambda}\left(\frac{d_1(x,\:z)}{d_1(x,\:y)}\right),
$$
then $\phi$ is called a $(\lambda,\:\kappa)$-{\it power quasisymmetry}. Here we have used the notation
$$
\eta_{\lambda,\kappa}(t)=\left\{\begin{array}{cl}
\lambda t^{1 / \kappa} & \text { for } 0<t<1, \\
\lambda t^{\kappa} & \text { for } t \geq 1.
\end{array}\right.
$$
\end{defn}

It is easy to see that every snowflake mapping is bi-H\"older. By carefully checking the proof of Theorem 6.15 in \cite{VThe}, we obtain the following proposition.

\begin{proposition}\label{pqs}
A power quasisymmetry between two bounded metric spaces is bi-H\"older.
\end{proposition}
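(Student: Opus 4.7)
The plan is to establish the two Hölder bounds separately, both via the power quasisymmetry condition combined with the bounded-diameter hypothesis. Set $D_i = \operatorname{diam}(X_i)$; the statement is trivial if $D_1 = 0$, and the hypothesis guarantees $D_2 < \infty$, so I may assume $0 < D_1, D_2 < \infty$, and also (as is built into the definition) $\lambda, \kappa \geq 1$.

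For the upper Hölder bound, I would first choose $a, b \in X_1$ with $d_1(a, b) \geq D_1/2$. For every $x \in X_1$ the triangle inequality then forces $\max\{d_1(x, a), d_1(x, b)\} \geq D_1/4$; denote by $z_x$ an element of $\{a, b\}$ realizing this bound. For distinct $x, y \in X_1$ with $d_1(x, y) \leq D_1/4 \leq d_1(x, z_x)$, the ratio $d_1(x, y)/d_1(x, z_x)$ lies in $(0, 1]$, where $\eta_{\lambda, \kappa}(t) = \lambda t^{1/\kappa}$; the power quasisymmetry condition therefore yields
\begin{equation*}
d_2(\phi(x), \phi(y)) \leq d_2(\phi(x), \phi(z_x)) \cdot \lambda \left(\frac{d_1(x, y)}{d_1(x, z_x)}\right)^{1/\kappa} \leq \lambda D_2 \left(\frac{4 d_1(x, y)}{D_1}\right)^{1/\kappa}.
\end{equation*}
For pairs with $d_1(x, y) > D_1/4$ the trivial bound $d_2(\phi(x), \phi(y)) \leq D_2$ already gives the same inequality up to an adjustment of the constant, and combining the two cases yields $d_2(\phi(x), \phi(y)) \leq C_1\, d_1(x, y)^{1/\kappa}$ for all $x, y$.

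For the reverse bound, I would run the identical argument on $\phi^{-1}$. The map $\phi$ being $\eta_{\lambda, \kappa}$-quasisymmetric implies by the standard duality $\tilde{\eta}(t) = 1/\eta_{\lambda, \kappa}^{-1}(1/t)$ that $\phi^{-1}$ is $\tilde{\eta}$-quasisymmetric, where a direct piecewise computation gives
\begin{equation*}
\tilde{\eta}(t) = \begin{cases} \lambda^{1/\kappa} t^{1/\kappa}, & t \leq 1/\lambda, \\ \lambda^\kappa t^\kappa, & t > 1/\lambda. \end{cases}
\end{equation*}
In particular, $\tilde{\eta}(t) \leq \lambda^\kappa t^{1/\kappa}$ for all $t \leq 1$ (since $\kappa \geq 1/\kappa$ forces $t^\kappa \leq t^{1/\kappa}$ on $(0,1]$). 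Choosing a reference pair in $X_2$ and repeating the two-case split from the previous paragraph then produces $d_1(\phi^{-1}(u), \phi^{-1}(v)) \leq C_2\, d_2(u, v)^{1/\kappa}$ for all $u, v \in X_2$, which upon substituting $u = \phi(x)$, $v = \phi(y)$ rearranges to $d_2(\phi(x), \phi(y)) \geq C_2^{-\kappa}\, d_1(x, y)^\kappa$. Together with the previous paragraph this is a bi-Hölder estimate with exponent $\alpha = 1/\kappa$.

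The main technical obstacle is the bookkeeping in passing to $\phi^{-1}$: the inverse control function $\tilde{\eta}$ has its transition point at $1/\lambda$ rather than at $1$, so it is not itself of the form $\eta_{\lambda', \kappa'}$, and one must verify by hand that it still admits a clean monomial upper bound on the regime $(0, 1]$ so that the argument for the upper bound transfers verbatim. Once that comparison is in place, the remainder is mechanical: the power quasisymmetry (respectively $\tilde{\eta}$-quasisymmetry) controls small pairs, and the trivial diameter bound controls large pairs.
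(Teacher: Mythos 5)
Your proof is correct in substance and gives a direct, self-contained argument; the paper itself does not supply a proof but only cites Theorem 6.15 of V\"ais\"al\"a, so there is no internal proof to compare against. The structure you use — a fixed reference pair realizing a definite fraction of the diameter, a case split on whether the ratio of distances falls below or above that fraction, and the standard duality $\tilde{\eta}(t)=1/\eta^{-1}(1/t)$ for the inverse control function — is the canonical argument for this fact and would match what one finds in V\"ais\"al\"a's treatment.

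Two small points of bookkeeping. First, you write ``as is built into the definition, $\lambda,\kappa\geq 1$,'' but the paper only requires $\kappa>0$. You should record the reduction explicitly: since $\eta_{\lambda,\kappa}$ is increasing in $\kappa$ (pointwise on $(0,\infty)$), every $\eta_{\lambda,\kappa}$-power quasisymmetry with $\kappa<1$ is also an $\eta_{\lambda,1}$-power quasisymmetry, so one may indeed assume $\kappa\geq 1$ without loss of generality. Without this line your key comparison $\lambda^{1/\kappa}\leq\lambda^{\kappa}$ and $t^{\kappa}\leq t^{1/\kappa}$ on $(0,1]$ has no justification. Second, the case split should be $d_1(x,y)<D_1/4$ versus $d_1(x,y)\geq D_1/4$ (strict in the first branch): when $d_1(x,y)=D_1/4$ one could have $y=z_x$, and the power quasisymmetry inequality only applies to triples of \emph{distinct} points; the strict split removes this degenerate boundary case cleanly, since $d_1(x,z_x)\geq D_1/4>d_1(x,y)$ forces $y\neq z_x$. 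With those two fixes the argument is complete.
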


Also, we need an auxiliary result for our later use.
\begin{proposition}[Section 6,\cite{Schramm1999Embeddings}]\label{bonk}
Suppose $f: X\rightarrow Y$ is a rough quasi-isometry between two geodesic Gromov hyperbolic spaces $X$ and $Y$. Then $f$ sends every Gromov sequence in $X$ to a Gromov sequence in $Y$ and induces a power quasisymmetric boundary mapping $\tilde{f}_{G}: \partial_{G} X \rightarrow \partial_{G} Y$, where $\partial_{G} X$ and $ \partial_{G} Y$ are equipped with certain visual metrics.
\end{proposition}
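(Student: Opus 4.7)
The statement splits into two parts: the existence of a well-defined boundary map $\tilde{f}_{G}$, and the verification that it is power quasisymmetric for visual metrics. I would derive both from a single two-sided comparison for Gromov products under $f$.

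The first step is to establish that, for all $x,y,o\in X$,
\[
\tfrac{1}{\lambda}(x\mid y)_{o}-C_{0}\;\le\;(f(x)\mid f(y))_{f(o)}\;\le\;\lambda\,(x\mid y)_{o}+C_{0},
\]
where $C_{0}=C_{0}(\delta,\lambda,\kappa)$. By (\ref{w1}), $(x\mid y)_{o}$ coincides up to an additive $\delta$-error with $d_{X}(o,[x,y])$ for any geodesic segment $[x,y]\subset X$. The image $f([x,y])$ is a $(\lambda,\kappa)$-quasi-geodesic in $Y$, so by Theorem \ref{sta} it lies within Hausdorff distance $R=R(\delta,\lambda,\kappa)$ of the geodesic $[f(x),f(y)]$; combining this with the rough quasi-isometry estimates applied to the distance from $o$ (resp.\ $f(o)$) to the relevant geodesic yields the displayed inequality. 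In particular $(f(x_{i})\mid f(x_{j}))_{f(o)}\to\infty$ whenever $(x_{i}\mid x_{j})_{o}\to\infty$, and equivalent Gromov sequences are sent to equivalent ones; thus $\tilde{f}_{G}([\{x_{i}\}]):=[\{f(x_{i})\}]$ gives a well-defined map $\partial_{G}X\to\partial_{G}Y$.

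I would then transfer this estimate to boundary pairs. Given $a,b\in\partial_{G}X$ with representative sequences $\{x_{i}\}\in a$, $\{y_{i}\}\in b$, Proposition \ref{z0} implies that $\liminf_{i}(x_{i}\mid y_{i})_{o}$ differs from $(a\mid b)_{o}$ by at most $2\delta$, and similarly on the $Y$-side. Taking the liminf in the Gromov product inequality above gives
\[
\tfrac{1}{\lambda}(a\mid b)_{o}-C_{1}\;\le\;\bigl(\tilde{f}_{G}(a)\mid\tilde{f}_{G}(b)\bigr)_{f(o)}\;\le\;\lambda\,(a\mid b)_{o}+C_{1}.
\]
Fixing visual metrics on $\partial_{G}X$ and $\partial_{G}Y$ with a common parameter $\epsilon>0$, the equivalence (\ref{ee}) converts this into the two-sided bi-H\"older-type estimate
\[
A_{1}\,\rho_{G,X}(a,b)^{\lambda}\;\le\;\rho_{G,Y}\bigl(\tilde{f}_{G}(a),\tilde{f}_{G}(b)\bigr)\;\le\;A_{2}\,\rho_{G,X}(a,b)^{1/\lambda}.
\]

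To upgrade this bi-H\"older inequality to a power quasisymmetric one, I would use that visual metrics are quasi-ultrametric: the Gromov inequality $(x\mid z)_{o}\ge\min\{(x\mid y)_{o},(y\mid z)_{o}\}-\delta$ translates via (\ref{ee}) into $\rho_{G,X}(x,z)\le M\max\{\rho_{G,X}(x,y),\rho_{G,X}(y,z)\}$ for some $M\ge 1$. For a triple $x,y,z\in\partial_{G}X$ and $t=\rho_{G,X}(x,z)/\rho_{G,X}(x,y)$, splitting into the cases $t\ge 1$ and $t<1$, this quasi-ultrametric property lets one replace two of the three mutual distances by the maximum of the triple, which absorbs the exponent mismatch coming from the bi-H\"older bounds and produces the desired control of the form $\eta_{\lambda',\kappa'}(t)$ with constants depending only on $\lambda,\kappa,\delta,\epsilon$. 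The hardest step is precisely this upgrade: a direct substitution of the two pointwise bounds into the target ratio produces a spurious factor $\rho_{G,X}(x,y)^{1/\lambda-\lambda}$, which blows up as $\rho_{G,X}(x,y)\to 0$. The quasi-ultrametric structure---in essence the genuine Gromov hyperbolicity of both $X$ and $Y$---forces the two smallest mutual distances to be comparable, so the offending exponent mismatch becomes a bounded multiplicative constant.
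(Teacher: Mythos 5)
The paper itself gives no proof of Proposition \ref{bonk}: it is cited directly from Section 6 of Bonk--Schramm \cite{Schramm1999Embeddings}, so your proposal can only be judged on its own correctness, not against an in-paper argument. Your first steps are sound: the two-sided comparison $\tfrac{1}{\lambda}(x\mid y)_{o}-C_{0}\le(f(x)\mid f(y))_{f(o)}\le\lambda(x\mid y)_{o}+C_{0}$ does follow from (\ref{w1}), the Morse stability Theorem \ref{sta} applied to the image of a geodesic, and the rough quasi-isometry inequalities applied to distances to that geodesic; the passage to boundary pairs via Proposition \ref{z0} and the conversion to a bi-H\"older estimate via (\ref{ee}) are likewise fine, and you correctly locate the difficulty in the exponent mismatch $\rho_{G,X}(x,y)^{1/\lambda-\lambda}$.

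The final upgrade step, however, contains a genuine error. You assert that the quasi-ultrametric inequality "forces the two smallest mutual distances to be comparable." It forces the opposite: applying $\rho(x,z)\le M\max\{\rho(x,y),\rho(y,z)\}$ to all three labelings of a triple shows only that the two \emph{largest} of the three mutual distances differ by at most a factor $M$, equivalently that the two \emph{smallest} Gromov products differ by $O(\delta)$. The smallest distance can be arbitrarily far below the middle one even in a genuine ultrametric space (a triple realizing distances $\varepsilon,1,1$ for tiny $\varepsilon$ is admissible). The spurious factor $\rho_{G,X}(x,y)^{1/\lambda-\lambda}$ is problematic precisely when $\rho_{G,X}(x,y)$ is small, i.e.\ when it is one of the two smaller distances, which the corrected quasi-ultrametric statement does not control at all. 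So the argument does not close as written. The standard route (Bonk--Schramm) proves instead that the boundary map satisfies a power quasi-M\"obius four-point estimate on cross-ratios---a quantity in which, unlike the ratio $\rho(x,z)/\rho(x,y)$, the problematic exponent mismatch can be controlled---and then invokes the general fact that power quasi-M\"obius maps between bounded metric spaces are power quasisymmetric. To complete your proof you would need to supply that mechanism rather than the quasi-ultrametric observation.
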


Now we are ready to show Theorem \ref{app}. For the convenience of the reader we restate the theorem as follows.

\begin{thm}
Suppose that $\Omega$ is a bounded domain in $\mathbb{C}^n (n\geq 2)$ and that $\Omega$ satisfies either\\
$($a$)$ $\Omega$ is convex with Dini-smooth boundary and $(\Omega,K_{\Omega})$ is Gromov hyperbolic; or \\
$($b$)$ $\Omega$ is strongly pseudoconvex with $C^2$-smooth boundary. \\
Then the identity map $id: \Omega \rightarrow \Omega$ extends to a bi-H\"{o}lder homeomorphism between the boundaries
\begin{align*}
id:(\partial\Omega,\: |\cdot|)\rightarrow (\partial_{G}\Omega,\: \rho_G)
\end{align*}
(to simplify notation, here use the same notation), where $\rho_G$ belongs to the visual metrics class on the Gromov boundary of $(\Omega,\: K_{\Omega})$ (see (\ref{ee})).
\end{thm}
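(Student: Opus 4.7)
The plan is to extend the identity map to $\partial\Omega\to\partial_G\Omega$ by sending $\xi\in\partial\Omega$ to the equivalence class of any Euclidean-convergent sequence $x_i\to\xi$, and then to establish the two H\"{o}lder estimates
\[
\rho_G(\xi,\eta)\;\leq\; C_1|\xi-\eta|^{\epsilon/2},\qquad |\xi-\eta|\;\leq\; C_2\,\rho_G(\xi,\eta)^{2/(\alpha\epsilon)},
\]
where $\rho_G$ is a visual metric of parameter $\epsilon$ on $\partial_G \Omega$ and $\alpha$ is the exponent appearing in the Separation estimates. Bijectivity and the homeomorphism property of the extension will be a consequence of these bi-H\"{o}lder bounds.

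For the Euclidean-to-Gromov direction, combining the upper bound in Lemma \ref{nik} with the lower bound on $K_\Omega(\cdot,o)$ from Lemma \ref{est100} (case (a)) or Lemma \ref{est12} (case (b)) yields, for all $x,y\in\Omega$,
\[
(x|y)_o\;\geq\; \tfrac12\log\frac{1}{\sqrt{\delta_\Omega(x)\delta_\Omega(y)}+A|x-y|}-K.
\]
This forces $(x_i|x_j)_o\to\infty$ whenever $\delta_\Omega(x_i)$, $\delta_\Omega(x_j)$ and $|x_i-x_j|$ tend to $0$, so every Euclidean-convergent sequence is Gromov and two such sequences with the same Euclidean limit are equivalent. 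Conversely, if $\{x_i\}$ is a Gromov sequence then $K_\Omega(x_i,o)\to\infty$, and the lower bound on $K_\Omega(\cdot,o)$ forces $\delta_\Omega(x_i)\to0$; after passing to a subsequence $\{x_i\}$ has a Euclidean cluster point on $\partial\Omega$, which is unique by the same displayed estimate. Passing to the limit along $x_i\to\xi$, $y_i\to\eta$ gives $(\xi|\eta)_o\geq\tfrac12\log(1/|\xi-\eta|)-K'$, and combined with $\rho_G(\xi,\eta)\asymp e^{-\epsilon(\xi|\eta)_o}$ this furnishes the first H\"{o}lder bound.

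The heart of the proof is the reverse estimate, where the Separation property plays the central role. Given $x,y\in\Omega$ I would choose, on the Kobayashi geodesic $[x,y]$, the \emph{Euclidean midpoint} $\omega_m$, i.e.\ the point with $l_d([x,\omega_m])=l_d([\omega_m,y])=l_d([x,y])/2$. Since $\omega_m\in[x,y]$, the hyperbolicity estimate (\ref{w1}) gives $K_\Omega(o,\omega_m)\geq K_\Omega(o,[x,y])\geq (x|y)_o-\delta'$, and Lemma \ref{nik} then upgrades this to $\delta_\Omega(\omega_m)\leq C_0\, e^{-2(x|y)_o}$. On the other hand, the Separation property (Lemma \ref{est14} in case (a), applicable by Theorem \ref{thm-Z}, or Lemma \ref{sep} in case (b)) together with the equal-halves choice yields $\delta_\Omega(\omega_m)\geq\tilde C\,(l_d([x,y])/2)^{\alpha}$. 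Chaining the two bounds and using $|x-y|\leq l_d([x,y])$ produces
\[
|x-y|\;\leq\; C'\, e^{-2(x|y)_o/\alpha}.
\]
Taking sequences $x_i\to\xi$, $y_i\to\eta$ that nearly realize the infimum defining $(\xi|\eta)_o$ and passing to the limit gives $|\xi-\eta|\leq C'\, e^{-2(\xi|\eta)_o/\alpha}$, which combined with the visual-metric formula yields the second H\"{o}lder bound.

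The main obstacle is precisely this last step: the Separation lemmas only control $\delta_\Omega(\omega)$ by the \emph{minimum} $l_d([x,\omega])\wedge l_d([\omega,y])$, so to recover the \emph{total} length $l_d([x,y])$ one cannot take $\omega$ to be the Kobayashi-closest projection of $o$ onto $[x,y]$ (the natural candidate for relating to $(x|y)_o$). The choice of the Euclidean midpoint $\omega_m$ sidesteps this asymmetry by equalizing the two halves, and Gromov hyperbolicity is what lets us still bound $K_\Omega(o,\omega_m)$ from below by $(x|y)_o-\delta'$ even though $\omega_m$ need not be the Kobayashi projection; this coupling of Separation with the hyperbolicity-based projection estimate is the technical point at which the Gehring-Hayman circle of ideas drives the boundary correspondence.
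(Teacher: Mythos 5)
Your architecture mirrors the paper's: establish the bijection between the Euclidean and Gromov boundaries, then prove the two H\"older estimates separately. Your argument for the lower bound on $\rho_G$ (choose the Euclidean midpoint $\omega_m$ of the geodesic, bound $\delta_\Omega(\omega_m)$ from below by the Separation estimate of Lemma~\ref{est14}/\ref{sep} and from above via $K_\Omega(o,\omega_m)\geq(x|y)_o$ together with Lemma~\ref{nik}) is essentially the paper's ``one direction.'' For the upper bound $\rho_G(\xi,\eta)\lesssim|\xi-\eta|^{\epsilon/2}$, however, you take a genuinely simpler route, working directly from the pointwise estimate $(x|y)_o\geq\tfrac12\log\frac{1}{\sqrt{\delta_\Omega(x)\delta_\Omega(y)}+A|x-y|}-K$ and passing to the limit via Proposition~\ref{z0}. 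The paper instead sends this direction through the closest-point projection $z_k$ of $o$ onto $[x_k,y_k]$, the hyperbolicity relation $e^{-(x_k|y_k)_o}\asymp e^{-K_\Omega(o,z_k)}$, and the Gehring-Hayman theorem to control $\delta_\Omega(z_k)$, landing on the weaker exponent $c_2\epsilon/2$. Your route avoids the GH theorem entirely in this direction and is in fact sharper.

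Two attributions in your bijection discussion are off, although the needed facts are available to you. First, $K_\Omega(x_i,o)\to\infty$ forces $\delta_\Omega(x_i)\to0$ via the \emph{upper} bound on $K_\Omega$ from Lemma~\ref{nik}, not the lower bounds of Lemma~\ref{est100} or Lemma~\ref{est12}; the latter give only the implication in the opposite direction. Second, uniqueness of the Euclidean cluster point of a Gromov sequence does not follow from your displayed lower bound on $(x|y)_o$, which permits $(x|y)_o$ to be large even when $|x-y|$ is bounded away from $0$; what you need is an \emph{upper} bound of the form $(x|y)_o\lesssim\tfrac{\alpha}{2}\log\frac{1}{|x-y|}$, which is precisely what your midpoint-plus-Separation argument delivers (and is the paper's route to this fact). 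Finally, a minor point: $K_\Omega(o,\omega_m)\geq K_\Omega(o,[x,y])\geq(x|y)_o$ is the trivial direction of the projection inequality and does not use Gromov hyperbolicity, so hyperbolicity plays a smaller role at that step than your closing paragraph suggests.
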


\begin{proof}
We first record some auxiliary results for later use. On one hand, it follows from Theorems \ref{thm} and \ref{thm-2} that: in both cases (a) and (b) there are constants $c_1, \: c_2>0$ such that, for any $x,\: y\in\Omega$,
\begin{align}\label{z2} l_{d}([x,y])\leq c_1|x-y|^{c_2},\end{align}
where $[x,y]$ is a Kobayashi geodesic joining $x$ and $y$ in $\Omega$.

On the other hand, by using Lemmas \ref{est14} and \ref{sep}, we see that there exist constants ${\tilde{C}},\: \alpha>0$ such that, for every $u\in [x,y]$,
\begin{align}\label{z3}
\delta_{\Omega}(u)\geq \tilde{C}\left(l_{d}([x,u])\wedge l_{d}([u,y])\right)^{\alpha}.
\end{align}

Next we want to show that the identity map extends to a bijection between the Euclidean boundary of $\Omega$ and the Gromov boundary of the space $(\Omega,\: K_\Omega)$. More precisely, a sequence in $\Omega$ is a Gromov sequence if and only if it converges to some boundary point in the Euclidean metric.

To this end, fix a point $\omega \in \Omega$. For any Gromov sequences $\{x_k\},\{y_k\}\subset\Omega$ with $(x_k|y_k)_{\omega}\rightarrow \infty$ as $k\to\infty$. For each $k$, we may connect $x_k$ and $y_k$ by a Kobayashi geodesic $[x_k,\:y_k]$ in $\Omega$. Then
choose a point $u_k\in[x_k,y_k]$ such that
$l_d([x_k,u_k])=l_d([u_k,y_k])$.
Lemma \ref{nik} implies that there exists $K_1>0$ such that
\begin{align}\label{est102}
K_{\Omega}(\omega,\:u_k)\leq\frac{1}{2}\log\frac{1}{\delta_{\Omega}(u_k)}+K_1,
\end{align}
which implies that
$$(x_k|y_k)_{\omega}\leq K_{\Omega}(\omega,\:u_k)\leq \frac{1}{2}\log\frac{1}{\delta_{\Omega}(u_k)}+K_1.$$
Thus we have
$\delta_{\Omega}(u_k)\rightarrow 0$ as $k\to \infty$.
Now by applying (\ref{z3}) to the geodesic $[x_k,y_k]$, we obtain that
\begin{align}\label{est101}
|x_k-y_k|\leq l_{d}([x_k,y_k])\leq \frac{2}{\tilde{C}}\delta_{\Omega}^{\frac{1}{\alpha}}(u_k)\rightarrow 0,\end{align}
as desired.

On the other hand, for every $x\in\partial\Omega$, choose sequences $\{x_k\},\{y_k\}\subset\Omega$ with
$x_k\rightarrow x$ and $y_k\rightarrow x$ as $k\to \infty$. For each $k$, again we may join $x_k$ and $y_k$ by a Kobayashi geodesic $[x_k,y_k]$ in $\Omega$. Choose a point $z_k\in[x_k,y_k]$ such that $$K_{\Omega}(\omega,z_k)=K_{\Omega}(\omega,[x_k,y_k]).$$
Now, applying (\ref{z2}) to the geodesic $[x_k,y_k]$, we conclude that
\begin{align}\label{ff}
\delta_{\Omega}(z_k)&\leq \frac{1}{2}l_d([x_k,y_k])+\delta_{\Omega}(x_k)\wedge\delta_{\Omega}(y_k)\notag\\&\leq
\frac{c_1}{2}|x_k-y_k|^{c_2}+\delta_{\Omega}(x_k)\wedge\delta_{\Omega}(y_k).
\end{align}
By using the estimates (\ref{est3}) and (\ref{est21}) in both cases (a) and (b), it follows immediately that there exists $K_2>0$ such that
\begin{align}\label{zz}
K_{\Omega}(\omega,z_k)\geq\frac{1}{2}\log \frac{1}{\delta_{\Omega}(z_k)}-K_2.\end{align}
Since $|x_k-y_k|\rightarrow 0$ and $\delta_{\Omega}(x_k),\: \delta_{\Omega}(y_k)\rightarrow 0$, it is obvious that
$K_{\Omega}(\omega,[x_k,y_k])\rightarrow\infty$ as $k\to \infty$. Then, the standard estimate (\ref{w1}) implies that
$(x_k|y_k)_\omega\to \infty$ as $k\to\infty$.
Therefore, we have already proved that the identity map $id: \Omega \rightarrow \Omega$ extends to a bijection (still use the same name) $$
id:(\partial\Omega,\: |\cdot|)\rightarrow (\partial_{G}\Omega,\: \rho_G)
$$
between the Euclidean boundary of $\Omega$ and the Gromov boundary of $(\Omega,\: K_\Omega)$.

\bigskip
We are now ready to show the boundary mapping $id:(\partial\Omega,\: |\cdot|)\rightarrow (\partial_{G}\Omega,\: \rho_G)$ is bi-H\"{o}lder continuous, where $\rho_G$ is a visual metric on the Gromov boundary of $(\Omega,\: K_{\Omega})$ with parameter $\varepsilon>0$ and the base point $\omega$ (see (\ref{ee})).

For all $x,\:y\in \partial\Omega$, take two sequences $\{x_k\}$ and $\{y_k\}$ in $\Omega$ such that
$$|x_k-x|\vee |y_k-y|\to 0\;\;\mbox{as}\;\;k\to \infty.$$
By repeating the above argument, we may assume without loss of generality that $x,\: y\in \partial_G\Omega$, and $\{x_k\}\in x$, $\{y_k\}\in y$ are Gromov sequences.

For the one direction, as before let $u_k\in[x_k,y_k]$ such that $l_d([x_k,u_k])=l_d([u_k,y_k])$. Similarly, application of the same argument as above gives (\ref{est102}) and (\ref{est101}). These two estimates guarantee that
\begin{align*}
e^{-(x_k|y_k)_{\omega}}&\geq e^{-K_{\Omega}(\omega,\:u_k)}
\geq e^{-K_1}\delta_{\Omega}^{\frac{1}{2}}(u_k)\\
&\geq \frac{\tilde{C}}{2^{\alpha/2}e^{K_1}}\Big(l_d[x_k,\:y_k]\Big)^{\alpha/2}\geq\frac{\tilde{C}}{2^{\alpha/2}e^{K_1}}|x_k-y_k|^{\alpha/2}.
\end{align*}
Thus it follows from Proposition \ref{z0} and the estimate $(\ref{ee})$ that
\begin{align*}
\rho_G(x,\: y)&\geq\frac{1}{C} e^{-\varepsilon(x|y)_{\omega}}
\geq \frac{1}{C} \limsup_{k\to \infty}e^{-\varepsilon(x_k|y_k)_{\omega}}\\
&\geq \frac{1}{C'} \limsup_{k\to \infty}|x_k-y_k|^{\alpha\varepsilon/2}\\
&=\frac{1}{C'}|x-y|^{\alpha\varepsilon/2}.
\end{align*}

For the other direction, as before we choose a point $z_k\in[x_k,y_k]$ such that $$K_{\Omega}(\omega,\: z_k)=K_{\Omega}(\omega,[x_k,\: y_k]).$$
Again, a similar argument as before shows that the estimates (\ref{ff}) and (\ref{zz}) hold. Moreover, from the standard estimate (\ref{w1}), it is obvious that
\begin{align*}
e^{-(x_k|y_k)_{\omega}}&\asymp e^{-K_{\Omega}(\omega,[x_k,y_k])}\\&=e^{-K_{\Omega}(\omega,z_k)}\leq e^{K_2}\delta_{\Omega}^{\frac{1}{2}}(z_k)\\&\leq e^{K_2}\Big(\frac{c_1}{2}|x_k-y_k|^{c_2}+\delta_{\Omega}(x_k)\vee\delta_{\Omega}(y_k)\Big)^{\frac{1}{2}}.
\end{align*}
Together with Proposition \ref{z0} and the estimate $(\ref{ee})$, we obtain that
\begin{align*}
\rho_G(x,y)&\leq C e^{-\varepsilon(x|y)_{\omega}}
\leq C \liminf_{k\to \infty}e^{-\varepsilon(x_k|y_k)_{\omega}}\\
&\leq C' \liminf_{k\to \infty}\Big(\frac{c_1}{2}|x_k-y_k|^{c_2}+\delta_{\Omega}(x_k)\vee\delta_{\Omega}(y_k)\Big)^{\frac{\varepsilon}{2}}\\
&\leq C''|x-y|^{c_2\varepsilon/2},
\end{align*}
as required.
\end{proof}


We also remark that the Gromov hyperbolicity of $(\Omega,\: K_\Omega)$ is only used to get $$e^{-(x|y)_{\omega}}\asymp e^{-K_{\Omega}(\omega,[x,y])}.$$
Thus if we get rid of this condition, by repeating the arguments in the proof of Theorem \ref{app} and using Remark \ref{hh}, we can easily deduce the following byproduct.
\begin{proposition}
Let $\Omega$ be a bounded $m$-convex domain in $\mathbb{C}^n (n\geq 2)$ with Dini-smooth boundary. Then for any $x,y\in\partial\Omega$, we have
\begin{align}\label{gg}
K_{\Omega}(\omega,[x,y])\asymp\log\frac{1}{|x-y|}.
\end{align}
\end{proposition}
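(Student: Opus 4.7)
The plan is to establish $(\ref{gg})$ by proving matching upper and lower bounds, each extracted directly from tools already at hand: the Gehring--Hayman length estimate (Theorem \ref{thm}) and the Separation property (Lemma \ref{est14}). The point of the proposition is precisely that one does not need the Gromov-product identification $e^{-(x|y)_{\omega}}\asymp e^{-K_{\Omega}(\omega,[x,y])}$ (which required Gromov hyperbolicity in the proof of Theorem \ref{app}); we work with $K_{\Omega}(\omega,[x,y])$ directly. By Remark \ref{hh}, for $x,y\in\partial\Omega$ Mercer's theorem provides a complex Kobayashi geodesic $[x,y]\subset\Omega$ whose continuous extension contains $\{x,y\}$, so Theorem \ref{thm} and Lemma \ref{est14} both apply to $[x,y]$ after a straightforward limiting argument.

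For the lower bound, I would start from the elementary observation that, since both endpoints lie on $\partial\Omega$, every $u\in[x,y]$ satisfies
$$\delta_{\Omega}(u)\le \min\bigl(|u-x|,|u-y|\bigr)\le \tfrac{1}{2}l_{d}([x,y])\le \tfrac{c_{1}}{2}|x-y|^{c_{2}},$$
where the last inequality is Theorem \ref{thm}. Combining this uniform estimate with Lemma \ref{est100} (applicable because an $m$-convex domain is convex) yields
$$K_{\Omega}(\omega,[x,y])=\inf_{u\in[x,y]}K_{\Omega}(\omega,u)\ge \tfrac{1}{2}\log\tfrac{1}{\sup_{u}\delta_{\Omega}(u)}-K\ge \tfrac{c_{2}}{2}\log\tfrac{1}{|x-y|}-K',$$
giving $K_{\Omega}(\omega,[x,y])\gtrsim \log(1/|x-y|)$.

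For the matching upper bound I would specialize to a single well-chosen point. Let $u_{0}\in[x,y]$ be the Euclidean midpoint of $[x,y]$, so that $l_{d}([x,u_{0}])\wedge l_{d}([u_{0},y])=l_{d}([x,y])/2\ge |x-y|/2$. Applying Lemma \ref{est14} (the Separation property) then gives $\delta_{\Omega}(u_{0})\ge \tilde C(|x-y|/2)^{\alpha}$, i.e.\ $u_{0}$ sits a controlled distance from $\partial\Omega$. Feeding this into Lemma \ref{nik} produces
$$K_{\Omega}(\omega,[x,y])\le K_{\Omega}(\omega,u_{0})\le \log\!\left(1+\frac{A\,\mathrm{diam}(\Omega)}{\sqrt{\delta_{\Omega}(\omega)\,\delta_{\Omega}(u_{0})}}\right)\le \tfrac{\alpha}{2}\log\tfrac{1}{|x-y|}+\mathrm{const},$$
valid once $|x-y|$ is small enough. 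Together with the previous step this gives the asymptotic equivalence $(\ref{gg})$.

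The only real technicality is justifying that Lemma \ref{est14} and Theorem \ref{thm} apply with endpoints on $\partial\Omega$. I would handle this by approximation: choose sequences $x_{k},y_{k}\in[x,y]$ with $x_{k}\to x$ and $y_{k}\to y$ along the complex geodesic (which is admissible by the Mercer/Hardy--Littlewood continuous extension cited in Remark \ref{hh}); apply Lemma \ref{est14} and Theorem \ref{thm} to the interior subgeodesics $[x_{k},y_{k}]$; and pass to the limit using continuity of $\delta_{\Omega}$ and of $l_{d}$, plus monotonicity of $l_{d}([x_{k},u])\wedge l_{d}([u,y_{k}])$ in $k$. All other steps are routine manipulations of the inequalities already in the paper.
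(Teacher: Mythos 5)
Your proposal is correct and follows essentially the same route as the paper, which obtains the proposition ``by repeating the arguments in the proof of Theorem \ref{app} and using Remark \ref{hh}'': your upper bound (Euclidean midpoint plus Lemma \ref{est14}, then Lemma \ref{nik}) and your lower bound (Theorem \ref{thm} via Remark \ref{hh}, then Lemma \ref{est100}) are exactly the two estimates used there, stripped of the Gromov-product detour that the paper itself points out is unnecessary here. The only cosmetic difference is that you state the inequalities directly at boundary endpoints and defer the interior-approximation to a final remark, whereas the paper runs the approximating sequences $x_k,y_k\to x,y$ throughout.
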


Owe to the bilipschitz equivalence of the canonical metrics on $\Omega$, (\ref{gg}) also holds with respect to the Kobayashi metric, the Bergman metric, the Carath\'{e}odory metric and the K\"{a}hler-Einstein metric on $\Omega$. Furthermore, denoting by $\varrho_{\Omega}$ one of the canonical metrics on $\Omega$, we obtain

\begin{cor}\label{w2}
Suppose that $\Omega$ is a bounded domain in $\mathbb{C}^n (n\geq 2)$ and that $\Omega$ satisfies either\\
$($a$)$ $\Omega$ is convex with Dini-smooth boundary and $(\Omega,\: \varrho_{\Omega})$ is Gromov hyperbolic; or \\
$($b$)$ $\Omega$ is strongly pseudoconvex with $C^2$-smooth boundary. \\
Then the identity map $id: \Omega \rightarrow \Omega$ extends to a bi-H\"{o}lder homeomorphism
\begin{align*}
id:(\partial\Omega,\: |\cdot|)\rightarrow (\partial_{G}\Omega,\: \rho),
\end{align*}
where $\rho$ belongs to the visual metrics class on the Gromov boundary of $(\Omega,\: \varrho_{\Omega})$.
\end{cor}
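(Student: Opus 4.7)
The plan is to reduce Corollary \ref{w2} to the already established Kobayashi version (Theorem \ref{app}) via the bilipschitz equivalence of the canonical metrics and the behavior of Gromov boundaries under rough quasi-isometry.

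First, I would invoke the uniformly squeezing property recorded in Subsection \ref{usq}: bounded convex domains and bounded strongly pseudoconvex domains are both HHR/USq, so the Kobayashi, Bergman, Carathéodory, and Kähler-Einstein metrics on $\Omega$ are pairwise bilipschitzly equivalent. In particular, the identity map $\mathrm{id}:(\Omega,K_{\Omega})\to(\Omega,\varrho_{\Omega})$ is bilipschitz, and hence a rough quasi-isometry. By Theorem \ref{gro2}, Gromov hyperbolicity is preserved under rough quasi-isometries, so the two spaces $(\Omega,K_{\Omega})$ and $(\Omega,\varrho_{\Omega})$ are simultaneously Gromov hyperbolic. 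In case (a) this is given by hypothesis; in case (b) Theorem \ref{cc} provides Gromov hyperbolicity of $(\Omega,K_{\Omega})$ directly. Either way, the hypotheses of Theorem \ref{app} are satisfied.

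Next, I would apply Theorem \ref{app} to obtain a bi-Hölder homeomorphism
\[
\mathrm{id}:(\partial\Omega,|\cdot|)\longrightarrow(\partial_{G}\Omega,\rho_{G}^{K}),
\]
where $\rho_{G}^{K}$ is a visual metric on the Gromov boundary of $(\Omega,K_{\Omega})$. On the other hand, by Proposition \ref{bonk}, the rough quasi-isometry $\mathrm{id}:(\Omega,K_{\Omega})\to(\Omega,\varrho_{\Omega})$ induces a power quasisymmetric boundary map between $(\partial_{G}\Omega,\rho_{G}^{K})$ and $(\partial_{G}\Omega,\rho)$, where $\rho$ is a visual metric attached to $\varrho_{\Omega}$. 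Since Gromov boundaries equipped with visual metrics are bounded, Proposition \ref{pqs} upgrades this power quasisymmetry to a bi-Hölder homeomorphism.

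Finally, composing these two bi-Hölder maps yields the desired bi-Hölder homeomorphism $(\partial\Omega,|\cdot|)\to(\partial_{G}\Omega,\rho)$. Independence of the choice of visual metric within the canonical class is guaranteed by the remark after Theorem \ref{app}, which notes that any two visual metrics on a Gromov boundary are power quasisymmetrically and thus bi-Hölder equivalent. There is no genuine obstacle here beyond careful bookkeeping: the deep analytic content (Gehring-Hayman estimates, Separation property, boundary identification) has been packaged into Theorem \ref{app}, and the rest is a formal composition argument, with the only point requiring minor care being the passage from power quasisymmetry to bi-Hölder via Proposition \ref{pqs}.
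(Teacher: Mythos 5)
Your proposal follows essentially the same route as the paper: bilipschitz equivalence of the canonical metrics via uniform squeezing, transfer of Gromov hyperbolicity by Theorem \ref{gro2}, application of Theorem \ref{app} to the Kobayashi metric, a power-quasisymmetric boundary map from Proposition \ref{bonk} upgraded to bi-Hölder via Proposition \ref{pqs}, and composition of bi-Hölder maps. The argument is correct; you simply spell out the two cases (a) and (b) slightly more explicitly than the paper does.
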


\begin{proof} Note that, by using the fact recorded in Section \ref{usq}, the Kobayashi metric $K_\Omega$ is bilipschitzly equivalent to $\varrho_{\Omega}$ under the identity map. Then by Theorem \ref{gro2}, if $\Omega$ is Gromov hyperbolic with respect to the metric $\varrho_{\Omega}$, then it is also Gromov hyperbolic with respect to the Kobayashi metric $K_{\Omega}$.

From Propositions \ref{bonk} and \ref{pqs}, it follows that the identity map
$$id:(\Omega,\: K_{\Omega})\to (\Omega,\: \varrho_{\Omega})$$
extends to a bi-H\"older homeomorphism between the Gromov boundary of $(\Omega,\; K_{\Omega})$ and the Gromov boundary of $(\Omega,、： \varrho_{\Omega})$ with respect to the visual metrics.

Moreover, it follows from Theorem \ref{app} that the identity map extends to a bi-H\"older homeomorphism between the Euclidean boundary of $\Omega$ and the Gromov boundary of $(\Omega,\: K_{\Omega})$.

Therefore, the conclusion follows easily from these results and the fact that the composition of bi-H\"older mappings is bi-H\"older as well.
\end{proof}

Finally, we conclude this section by proving Corollary \ref{cor2}.

\textbf{Proof of Corollary \ref{cor2}.}
At first, one observes from Propositions \ref{bonk} and \ref{pqs} that
$$f:(\Omega_1,K_{\Omega_1})\to (\Omega_2,K_{\Omega_2})$$
extends to a homeomorphism such that every sequence $\{x_n\}$ in $(\Omega_1,\: K_{\Omega_1})$ is Gromov if and only if the image sequence $\{f(x_n)\}$ in $(\Omega_2,\: K_{\Omega_2})$ is Gromov. Moreover, the induced mapping
$\widetilde{f}:\partial_{G}\Omega_1\to \partial_{G}\Omega_2$
is bi-H\"older when the Gromov boundaries $\partial_{G}\Omega_i$ of $(\Omega_i,K_{\Omega_i})$ are endowed with their visual metrics for $i=1,2$.

For each $i=1,2$, then by Theorem \ref{app}, we see that the identity map
$id_i:\Omega_i\to \Omega_i$
extends as a homeomorphism such that a sequence in $\Omega_i$ is a Gromov sequence if and only if it converges to a point in $\partial \Omega_i$. And the induced mapping
$$id_i:(\partial\Omega_i,\: |\cdot|)\rightarrow (\partial_{G}\Omega_i,\: \rho_i) $$
is bi-H\"older, where $\rho_i$ is a visual metric on the Gromov boundary of $(\Omega_i,K_{\Omega_i})$.

Therefore, we get a well-defined boundary mapping
$$\overline{f}=id_2^{-1}\circ\tilde{f}\circ id_{1}: \partial\Omega_1\to \partial\Omega_2$$
such that $\{x_n\}$ in $\Omega_1$ converges to a point in $\partial \Omega_1$ if and only if $\{f(x_n)\}$ in $\Omega_2$ converges to a point in $\partial \Omega_2$. This shows that $\overline{f}$ is the corresponding continuous extension mapping by $f$, which is a homeomorphism. Consequently, $\overline{f}$ is bi-H\"older because the composition of bi-H\"older mappings is also bi-H\"older.
The proof of the corollary is completed.
\qed

\bigskip

{\bf Acknowledgement.} The authors would like to thank Professors Manzi Huang and Xiantao Wang for many valuable comments and suggestions.

\bibliography{reference}

\begin{thebibliography}{10}

\bibitem{balogh2000gromov}
Z.~M. Balogh and M.~Bonk.
\newblock Gromov hyperbolicity and the {K}obayashi metric on strictly
  pseudoconvex domains.
\newblock {\em Commentarii Mathematici Helvetici}, 75(3):504--533, 2000.

\bibitem{BaloghGeometric}
Z.~M. Balogh and S.~M. Buckley.
\newblock Geometric characterizations of {G}romov hyperbolicity.
\newblock {\em Inventiones Mathematicae}, 153(2):261--301.

\bibitem{BarthConvex}
T.~J. Barth.
\newblock Convex domains and {K}obayashi hyperbolicity.
\newblock {\em Proceedings of the American Mathematical Society},
  79(4):556--558.

\bibitem{Bonk2001Uniformizing}
M.~Bonk, J.~Heinonen, and P.~Koskela.
\newblock Uniformizing {G}romov hyperbolic spaces.
\newblock {\em Astérisque}, 270(270), 2001.

\bibitem{BKR}
M.~Bonk, P.~Koskela, and S.~Rohde.
\newblock Conformal metrics on the unit ball in {E}uclidean space.
\newblock {\em Proceedings of the London Mathematical Society}, 77:635--664,
  1998.

\bibitem{Schramm1999Embeddings}
M.~Bonk and O.~Schramm.
\newblock Embeddings of {G}romov hyperbolic spaces.
\newblock {\em Geometric $\&$ Functional Analysis}, 10(2):266--306, 1999.

\bibitem{Homeomorphic}
F.~Bracci, H.~Gaussier, and A.~Zimmer.
\newblock Homeomorphic extension of quasi-isometries for convex domains in
  $\mathbb{C}^d$ and iteration theory.
\newblock {\em Mathematische Annalen}, 2020.

\bibitem{BMHA}
M.~Bridson and A.~Haefliger.
\newblock Metric spaces of non-positive curvature.
\newblock {\em Fundamental Principles of Mathematical Sciences},
  319:10.1007/978--3--662--12494--9, 2009.

\bibitem{squeezingfunc}
F.~S. Deng, Q.~A. Guan, and L.~Y. Zhang.
\newblock Properties of squeezing functions and global transformations of
  bounded domains.
\newblock {\em Transactions of the American Mathematical Society},
  368(4):2679--2696, 2016.

\bibitem{DiederichExposing}
K.~Diederich, J.~E. Forn{\ae}ss, and E.~F. Wold.
\newblock Exposing points on the boundary of a strictly pseudoconvex or a
  locally convexifiable domain of finite 1-type.
\newblock {\em Journal of Geometric Analysis}, 24(4):2124--2134, 2014.

\bibitem{Gehring1962An}
F.~W. Gehring and W.~K. Hayman.
\newblock An inequality in the theory of conformal mapping.
\newblock {\em Journal De Mathématiques Pures Et Appliqués}, 41(9):353--361,
  1962.

\bibitem{Gehring1979Uniform}
F.~W. Gehring and B.~G. Osgood.
\newblock Uniform domains and the quasi-hyperbolic metric.
\newblock {\em Journal D Analyse Mathématique}, 36(1):50--74, 1979.

\bibitem{Graham1975Boundary}
I.~Graham.
\newblock Boundary behavior of the {C}aratheodory and {K}obayashi metrics on
  strongly pseudoconvex domains in $\mathbb{C}^n$ with smooth boundary.
\newblock {\em Transactions of the American Mathematical Society},
  207(1):219--240, 1975.

\bibitem{Heinonen1994Quasiconformal}
J.~Heinonen and R.~N\"{a}kki.
\newblock Quasiconformal distortion on arcs.
\newblock {\em Journal D{'}analyse Math\'{e}matique}, 63(1):19--53, 1994.

\bibitem{Her06}
D.~Herron.
\newblock Quasiconformal deformations and volume growth.
\newblock {\em Proceedings of the London Mathematical Society}, 92(1):161--199,
  2006.

\bibitem{KimON}
K.~T. Kim and L.~Y. Zhang.
\newblock On the uniform squeezing property of bounded convex domains in
  $\mathbb{C}^n$.
\newblock {\em Pacific Journal of Mathematics}, 282(2):341--358, 2016.

\bibitem{KL}
P.~Koskela and P.~Lammi.
\newblock Gehring-{H}ayman theorem for conformal deformations.
\newblock {\em Commentarii Mathematici Helvetici}, 88(1):185--203, 2013.

\bibitem{KLM}
P.~Koskela, P.~Lammi, and V.~Manojlovi\'c.
\newblock Gromov hyperbolicity and quasihyperbolic geodesics.
\newblock {\em Annales Scientifiques De L \'{E}cole Normale Sup\'{e}rieure},
  47(5):975--990, 2014.

\bibitem{Liu2004Canonical}
K.~F. Liu, X.~F. Sun, and S.~T. Yau.
\newblock Canonical metrics on the moduli space of {R}iemann surfaces {I}.
\newblock {\em Journal of Differential Geometry}, 10(2):571--571, 2004.

\bibitem{Kefeng}
K.~F. Liu, X.~F. Sun, and S.~T. Yau.
\newblock Canonical metrics on the moduli space of {R}iemann surfaces. {II}.
\newblock {\em Journal of Differential Geometry}, 69(1):162--216, 2005.

\bibitem{MercerComplex}
P.~R. Mercer.
\newblock Complex geodesics and iterates of holomorphic maps on convex domains
  in $\mathbb{C}^n$.
\newblock {\em Transactions of the American Mathematical Society},
  338(1):201--211.

\bibitem{Nikolov2015Estimates}
N.~Nikolov and L.~Andreev.
\newblock Estimates of the {K}obayashi and quasi-hyperbolic distances.
\newblock {\em Annali di Matematica Pura ed Applicata}, 196(1):1--8, 2015.

\bibitem{Pommerenke1979Uniformly}
C.~Pommerenke.
\newblock Uniformly perfect sets and the {P}oincar\'{e} metric.
\newblock {\em Archiv Der Mathematik}, 32(1):192--199, 1979.

\bibitem{royden1971remarks}
H.~L. Royden.
\newblock Remarks on the {K}obayashi metric.
\newblock In {\em Several Complex Variables II Maryland 1970}, pages 125--137.
  Springer, 1971.

\bibitem{VThe}
J.~V\"{a}is\"{a}l\"{a}.
\newblock The free quasiworld. freely quasiconformal and related maps in banach
  spaces.
\newblock {\em Quasiconformal geometry and dynamics Banach center
  publications}, 48, 1999.

\bibitem{Vai-0}
J.~V\"{a}is\"{a}l\"{a}.
\newblock Gromov hyperbolic spaces.
\newblock {\em Expositiones Mathematicae}, 3:187--231, 2005.

\bibitem{VenturiniPseudodistances}
S.~Venturini.
\newblock Pseudodistances and pseudometrics on real and complex manifolds.
\newblock {\em Annali Di Matematica Pura Ed Applicata}, 154(1):385--402, 1989.

\bibitem{Yeung2009Geometry}
S.~K. Yeung.
\newblock Geometry of domains with the uniform squeezing property.
\newblock {\em Advances in Mathematics}, 221(2):547--569, 2009.

\bibitem{Zimmer2016Gromov}
A.~Zimmer.
\newblock Gromov hyperbolicity and the {K}obayashi metric on convex domains of
  finite type.
\newblock {\em Mathematische Annalen}, 365(3):1--74, 2016.

\bibitem{ZimmerCharacterizing}
A.~Zimmer.
\newblock Characterizing domains by the limit set of their automorphism group.
\newblock {\em Advances in Mathematics}, 308:438--482, 2017.

\bibitem{Subelliptic}
A.~Zimmer.
\newblock Subelliptic estimates from gromov hyperbolicity.
\newblock {\em arXiv:1904.10861}, 2019.

\end{thebibliography}
\bibliographystyle{plain}{}
\end{document}